\documentclass[11pt, a4paper]{amsart}
\usepackage[utf8]{inputenc}
\usepackage[english]{babel}
\usepackage{graphicx} 
\usepackage[pagebackref, colorlinks=true,linkcolor=blue,citecolor=red]{hyperref}
\usepackage{pdfpages}

\usepackage{pgfplots}
\usetikzlibrary{calc,3d,shapes, pgfplots.external, intersections}
\newcommand{\qbinom}[2]{\genfrac{[}{]}{0pt}{}{#1}{#2}_{\!2}}
\usepackage[margin=2.5cm]{geometry}
\usepackage{amsmath,amsfonts,amssymb,mathtools,tikz}
\usepackage{tikz}  
\usetikzlibrary{positioning,arrows.meta}
\usepackage{todonotes}

\newtheorem{thm}{Theorem}[section]
\newtheorem*{theorem*}{Theorem}

\newtheorem{prop}[thm]{Proposition}
\newtheorem{lemma}[thm]{Lemma}
\newtheorem{cor}[thm]{Corollary}
\newtheorem{remark}[thm]{Remark}
\newtheorem{example}[thm]{Example}

\newtheorem{definitionn}[thm]{Definition}
\newtheorem{problem}[thm]{Problem}

\DeclareMathOperator{\MH}{\mathrm{MH}}
\DeclareMathOperator{\MC}{\mathrm{MC}}

\newcommand{\cS}{\mathcal{S}}
\newcommand{\cL}{\mathcal{L}}

\newcommand{\Z}{\mathbb{Z}}
\newcommand{\N}{\mathbb{N}}

\title[A characterization of finite modular  groups via   magnitude homology]{A characterization of finite modular  2-groups via   magnitude homology}
\author{Luigi Caputi and Francesco G. Russo}

\date{\today}

\begin{document}

\begin{abstract}
Finite quasihamiltonian groups -- that is, finite groups in which every pair of subgroups commutes -- admit several characterizations.  
In this work, after introducing the  notion of  sectional contractibility graph, we provide  a new characterization in terms of (a suitable subgroup of) magnitude homology. Our approach especially focuses  on the subgroup lattices of finite groups and on the related notion of modularity. We present a detailed study for finite 2-groups, and, in particular, in the case of extraspecial 2-groups and Pauli groups.
    These  groups, of interest because of their diverse applications in both mathematics and physics, already describe the wealth of information that one can obtain in restricting the study to the case of 2-groups. \\
    \\
\textbf{MSC 2020:} Primary 05E45; 55U10; Secondary: 05C10; 20D30.\\
\textbf{Keywords and phrases:} magnitude homology; complexes of subgroups; modular groups; extraspecial 2-groups; section contractibility graph.

\end{abstract}

\maketitle

\section{Introduction}

Magnitude is a numerical invariant of isometry classes of metric spaces -- see~\cite{ Leinsterbook} for a general overview on the topic. 
Its definition  was motivated by considerations on the Euler characteristic in enriched category theory \cite{MR3084566} -- see also~\cite{MR3054629, Leinsterbook}.
Aside from category theory and metric geometry, magnitude has witnessed interest also in other areas such as  differential geometry~\cite{MR3158044},  minimal surfaces~\cite{MR4520035}, geometric measure theory, and potential theory~\cite{MR3701739}. Graphs, endowed with the shortest path distance, are prominent examples of metric spaces. 
In this context, magnitude was first studied in~\cite{magnitudelei}. Rather than on magnitude itself, in this work we are interested in a combinatorially defined graph invariant that categorifies magnitude, the \emph{magnitude homology} of graphs, as defined by Hepworth and Willerton~\cite{HW}, but see also~\cite{LeinsterShulman}. 
 In fact, magnitude homology provides a powerful tool for studying the combinatorial and geometric properties of graphs -- see, for instance~\cite{arXiv:1809.07240,zbMATH07283643,zbMATH07370433,asao2024girth,zbMATH07953420,zbMATH07860680,ivanov}. 
 
 In this work we focus on a specific family of graphs, as we now describe. To each poset $\mathcal{P}$  we can associate the so-called Hasse graph of $\mathcal{P}$; hence, we can compute the associated magnitude homology groups. This approach to magnitude homology of poset was investigated for example in~\cite{KY,ivanov}. 
 We first consider a  nontrivial finite group~$G$. Then we consider  the poset of nontrivial subgroups \[ \cS(G)=(1,G) =\{\,H\leq G\mid 1<H<G\,\}, \] of $G$, ordered by inclusion.    
 We also consider the subgroups lattice $\cL(G)$ of $G$; that is, the poset of \emph{all} subgroups, including $1$ and $G$. Then, we investigate the properties of the corresponding magnitude homology groups, with a particular attention to the  case of almost extraspecial 2-groups -- see Definition~\ref{defextrasp}. Magnitude homology of posets in fact is strictly related to combinatorial and topological properties of the (intervals of the) poset~\cite{KY,ivanov}. Motivated by this relationship, the main motivation of this work is to provide a connection between algebraic properties of groups -- such as modularity -- and the properties of the magnitude homology of their lattices of subgroups. 
 
 In \cite{iwasawa}, Iwasawa classified finite groups in which every pair  $(H,K) \in \cL(G) \times \cL(G)$ satisfies the condition $HK=KH$, that is, he classified those finite groups in which every two  subgroups commute.    These are the so-called \textit{quasihamiltonian groups} and they are nilpotent (see \cite{HHR, iwasawa, S}).  
 Finite groups in which every subgroup is normal are quasihamiltonian -- these are the so-called \textit{hamiltonian groups}. In particular, abelian groups belong to this family.  The subgroup lattice   $\cL(G)$ is  \emph{modular} if for all  $A,B,C \in \cL(G)$ we have $\langle A,B\cap C\rangle   = \langle A,B\rangle\cap C$
whenever $A $ is a subgroup of $C$~\cite{S}. Generally, we refer to a group~$G$ whose subgroup lattice $\cL(G)$ is modular as  \textit{modular group} (or we just say that $G$ is modular). 
A \textit{section} of a finite group $G$ is an epimorphic image of some subgroup of $G$. Denote by $\mu$ the M\"obius function. Our first main result  is the following characterization of modularity;

\begin{thm}[Sectional Contractibility Criterion] \label{thm:sectional-contractibility-modular} Let $G$ be a finite $2$-group. The following conditions are equivalent: 
\begin{enumerate} 
\item[{\rm (i).}]\label{item:modular} $\cL(G)$ is modular; 
\item[{\rm (ii).}]\label{item:all-sections-noncontractible} For every section $M$ of $G$ and every two distinct subgroups $A,B$ of $M$ of order $2$, the poset  $\cS(\langle A,B\rangle)$ is not contractible;
\item [{\rm (iii).}]\label{item:interval-noncontractible} For every  $N$ normal subgroup in $H$ and $H$ subgroup of $G$ and for every two distinct subgroups $A,B$ of $H$ which cover $N$ in the interval $[N,H]$, the open interval $(N, \langle A, B\rangle)$ is not contractible; 
\item[{\rm (iv).}]\label{item:interval-homology} In {\rm (iii)} above, one has  $\widetilde H_0((N,\langle A, B\rangle);\mathbb Z) \cong\mathbb Z^2$ and  $\mu(N,\langle A, B\rangle)=2.$ \end{enumerate}  \end{thm}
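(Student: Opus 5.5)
The plan is to reduce everything to two facts. The first is a topological statement about posets of subgroups of $2$-groups. The second is the classical characterization of modular $p$-groups by forbidden sections, from Iwasawa and Schmidt~\cite{iwasawa,S}: a finite $2$-group $G$ has modular subgroup lattice if and only if no section of $G$ is isomorphic to the dihedral group $D_8$. I will quote this rather than reprove it, and I expect the only real care to lie in invoking it in the correct form.

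The topological fact is the following. For a finite $p$-group $P$ that is not elementary abelian, the poset $\cS(P)$ is contractible. The proof is Quillen's conical contraction through the Frattini subgroup. Since $P$ is not elementary abelian, $1<\Phi(P)<P$. Every proper subgroup $H$ satisfies $H\Phi(P)<P$. The chain of poset maps $H\le H\Phi(P)\ge \Phi(P)$ then contracts $\cS(P)$ to the point $\Phi(P)$. Conversely, if $P\cong C_2\times C_2$, then $\cS(P)$ consists of three incomparable points. This poset is not contractible, has $\widetilde H_0\cong\mathbb Z^2$, and by Hall's theorem has $\mu=\widetilde\chi=2$.

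For (i)$\Leftrightarrow$(ii), let $A=\langle a\rangle$ and $B=\langle b\rangle$ be distinct subgroups of order $2$ in a section $M$. Then $\langle A,B\rangle$ is dihedral of order $2n$, where $n$ is the order of $ab$ and is a power of $2$ with $n\ge 2$. By the topological fact, $\cS(\langle A,B\rangle)$ is non-contractible exactly when $\langle A,B\rangle$ is elementary abelian, that is, when $n=2$ and $\langle A,B\rangle\cong C_2\times C_2$. Hence (ii) says that any two involutions in any section of $G$ commute.

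If some section is isomorphic to $D_8$, its two non-commuting reflections violate (ii). Conversely, suppose (ii) fails. Then some section contains a dihedral subgroup $D_{2^{k+1}}$ with $k\ge 2$, and this is again a section of $G$. Its quotient by the subgroup of order $2^{k-2}$ of the rotation group is $D_8$, so $G$ has a $D_8$ section. The Iwasawa--Schmidt criterion now gives (i)$\Leftrightarrow$(ii).

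For (ii)$\Leftrightarrow$(iii), use the correspondence theorem. Every section of $G$ is isomorphic to some $H/N$ with $N\trianglelefteq H\le G$. Subgroups of order $2$ in $H/N$ are exactly the $A/N$ with $A$ covering $N$ in $[N,H]$. Moreover, the open interval $(N,\langle A,B\rangle)$ is isomorphic as a poset to $\cS(\langle A/N,B/N\rangle)$, so the two conditions coincide.

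Finally, (iv)$\Rightarrow$(iii) is immediate, since $\widetilde H_0\ne0$ rules out contractibility. For (iii)$\Rightarrow$(iv), the argument above shows that non-contractibility forces $\langle A,B\rangle/N\cong C_2\times C_2$. The interval is then three points, which gives $\widetilde H_0\cong\mathbb Z^2$ and $\mu(N,\langle A,B\rangle)=2$.
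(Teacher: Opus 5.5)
Your proof is correct and follows essentially the same route as the paper. Both use the Iwasawa criterion (a finite $2$-group is modular exactly when it is dihedral-free), the dichotomy that two distinct involutions generate either $\mathbb{Z}(2)^2$ (three incomparable points, $\widetilde H_0\cong\mathbb Z^2$, $\mu=2$) or a non-elementary-abelian dihedral group whose $\cS$ is contractible through the Frattini subgroup, and the correspondence theorem identifying $(N,\langle A,B\rangle)$ with $\cS(\langle A,B\rangle/N)$. The differences are cosmetic: you prove the Frattini contractibility directly via Quillen's contraction $H\le H\,\mathrm{Frat}(P)\ge \mathrm{Frat}(P)$ rather than citing Kratzer, and you extract $D_8$ as a quotient of $D_{2^{k+1}}$ rather than as a subgroup. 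You also correctly allow two noncommuting involutions to generate a dihedral group of order larger than $8$, which Lemma~\ref{lem:two-involutions-contractibility} of the paper misstates as always being $D_8$.
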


We use Theorem~\ref{thm:sectional-contractibility-modular} to provide a characterization of  modular 2-groups in terms of magnitude homology.
First, we  
introduce  the notion of \emph{sectional contractibility graph} (see Definition~\ref{nicenicegraph}) and, for a group~$G$  we consider the value $a(G) \coloneqq \bigl|\{\,A\leq G\mid |A|=2\,\}\bigr|$. Then, we get the following;

 \begin{thm}[Magnitude-Homological Characterization of Modular 2-Groups] \label{cor:magnitude-characterization-modular} Let $G$ be a finite $2$-group. Then the following conditions are equivalent:  
 \begin{enumerate} 
 \item[{\rm (i)}.] $G$ is modular; 
 \item[{\rm (ii)}.] For every section $M$ of $G$, we have \[ \operatorname{rank}_{\mathbb Z}\MH^{\,1}_{2,2}(G_{\mathcal{L}(M)}) = \frac{a(M)(a(M)-1)}{3} \ ,\]
 where $\MH^{\,1}_{2,2}$ is the bottom supported magnitude homology. \end{enumerate} \end{thm}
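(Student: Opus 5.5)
We plan to reduce the statement to condition (ii) of Theorem~\ref{thm:sectional-contractibility-modular}. Concretely, we will show that for every section $M$ the displayed rank identity holds exactly when $\cS(\langle A,B\rangle)$ is non-contractible for all distinct subgroups $A,B\le M$ of order $2$. This reduction has three parts: a computation of $\MH^{\,1}_{2,2}$, a group-theoretic double count, and Quillen's contractibility criterion.

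\textbf{Step 1: computing the bottom-supported group.} We will use $\MH^{\,1}_{2,2}(G_{\cL(M)})$ as the direct summand of $\MH_{2,2}$ of the Hasse graph of $\cL(M)$ spanned by tuples starting at the bottom element $1$. Magnitude chains split over pairs of endpoints, so this summand is a sum over the vertices $K$ with $d(1,K)=2$.
\begin{itemize}
\item Any path of length $2$ from $1$ must go up twice. Hence $d(1,K)=2$ means exactly that $|K|=4$.
\item In bidegree $(2,2)$ the generators with endpoints $(1,K)$ are the chains $1<A<K$ with $|A|=2$. No chains of length $3$ contribute, since $\ell=2$ forces at most two steps.
\item The differential deletes the middle vertex. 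Because $d(1,K)=2=d(1,A)+d(A,K)$, it maps every such generator to zero.
\end{itemize}
So the summand at $K$ is free on the subgroups of order $2$ of $K$, reduced by the single relation coming from the boundary of $MC_{2,2}$ into degree $1$ in the usual Hepworth--Willerton normalisation. Its rank is therefore $\#\{A<K:|A|=2\}-1$. This equals $2$ if $K\cong C_2\times C_2$ and $0$ if $K\cong C_4$. This gives
\[
\operatorname{rank}\MH^{\,1}_{2,2}(G_{\cL(M)})=2\,v(M),
\]
where $v(M)$ is the number of Klein four-subgroups of $M$. This bookkeeping is the step I expect to need the most care. One must check precisely which summand the paper calls ``bottom supported'', and confirm that the rank of the local homology at a pair at distance $2$ is (number of midpoints)$\,-1$. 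This is the standard fact that $\MH_{2,2}$ at a pair at distance $2$ is the reduced $H_0$ of the set of midpoints, and I would cite Kaneta--Yoshinaga for it.

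\textbf{Step 2: double counting.} Two distinct subgroups $A,B$ of order $2$ generate a dihedral $2$-group $\langle A,B\rangle$, which is a Klein group if and only if $A$ and $B$ commute. Each Klein group contains exactly $3$ such pairs, and each pair lies in at most one Klein group, namely $\langle A,B\rangle$. Hence
\[
3\,v(M)\le \binom{a(M)}{2},
\]
with equality if and only if every pair $A\neq B$ generates a Klein four-group. Combining with Step 1, the identity in (ii) holds for $M$ if and only if all involutions of $M$ commute pairwise.

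\textbf{Step 3: link with contractibility.} By Quillen's criterion, for a $2$-group $P$ that is not elementary abelian, the map $H\mapsto H\Phi(P)$ is a conical contraction of $\cS(P)$, since $1<\Phi(P)<P$; hence $\cS(P)$ is contractible. Conversely, $\cS(C_2\times C_2)$ is a discrete set of three points and so is not contractible. Since $\langle A,B\rangle$ is $2$-generated, it is elementary abelian exactly when it is Klein. Thus $\cS(\langle A,B\rangle)$ is non-contractible if and only if $\langle A,B\rangle\cong C_2\times C_2$.

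Running this over all sections $M$, condition (ii) of the present theorem is equivalent to condition (ii) of Theorem~\ref{thm:sectional-contractibility-modular}. That theorem then gives the equivalence with modularity.
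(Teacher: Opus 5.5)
Your proposal is correct and follows the paper's own route: the rank of the bottom-supported summand is $2v(M)$ (Lemma~\ref{prop:bottom-magnitude-rank}), and pairs of distinct commuting involutions number $3v(M)\le\binom{a(M)}{2}$, with equality exactly when all involutions of $M$ commute (Lemma~\ref{lem:number-edges-klein-four}, Corollary~\ref{cor:numerical-characterization-modular}); requiring this for every section is condition (ii) of Theorem~\ref{thm:sectional-contractibility-modular}, and your contraction $H\mapsto H\,\mathrm{Frat}(P)$ in Step 3 just re-derives Lemma~\ref{lem:two-involutions-contractibility}. One slip in Step 1: the differential does not vanish on $(1,A,K)$, since deleting $A$ preserves length and so $\partial(1,A,K)=-(1,K)$, and your rank $m-1$ is the rank of the kernel of this nonzero map $\mathbb Z^m\to\mathbb Z$; in any case Definition~\ref{bottomsupported} defines $\MH^{\,1}_{2,2}$ directly as $\bigoplus_{|K|=4}\widetilde H_0((1,K);\mathbb Z)$, so no chain-level computation is needed.
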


The notion of \textit{bottom supported magnitude homology}  in Theorem~\ref{cor:magnitude-characterization-modular} (ii)  is introduced  in Definition \ref{bottomsupported}. The sectional contractibility graph of Definition~\ref{nicenicegraph} is strongly related to the bottom supported magnitude homology and allows us to recognize  finite modular groups in terms of purely combinatorial properties. This is not the case if we look at other combinatorial conditions -- such as planarity. On the other hand,  we provide a criterium to identify planarity for sectional contractibility graphs. As a consequence, we can indirectly study magnitude homological groups by looking at certain elementary abelian sections of the group at hand.  Our planarity result is the following;

\begin{thm}[Planarity of the Sectional Contractibility Graph]\label{planarity}
For every finite \(2\)-group \(G\), the sectional contractibility graph $\Gamma_{\mathrm{ct}}(G)$ is planar
if and only if \[m_2(G)=\max\{r \mid \Z(2)^r \ \mbox{is a subgroup of } \ G\}\leq 2.\]
This is equivalent to require that $G$  has no subgroups isomorphic to $\Z(2)^3$.
In particular, when these conditions hold,
\[\Gamma_{\mathrm{ct}}(G)\simeq K_1\vee(tK_2),
\qquad
t=\frac{|\mathrm{Inv}(G)|-1}{2}
\]
where $\Gamma_{\mathrm{ct}}$ is the wedge graph of point graph $K_1$ and of $t$-times the line graph $K_2$, and $\mathrm{Inv}(G)$ is the set of involutions of $G$.
\end{thm}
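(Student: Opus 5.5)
The plan is to reduce everything to the local structure of the graph around elementary abelian subgroups of rank~$2$ and $3$. I will use Definition~\ref{nicenicegraph} in the following form: the vertices of $\Gamma_{\mathrm{ct}}(G)$ are the subgroups of order $2$ (equivalently the involutions of $G$), and two distinct vertices $A,B$ are adjacent exactly when $\cS(\langle A,B\rangle)$ is not contractible. If the definition is phrased through sections, the argument below only needs adjustment in the first step.

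\textbf{Step 1: adjacency means generating a Klein four-group.} Two distinct involutions $a,b$ generate a dihedral group $\langle a,b\rangle$ of order $2|ab|$, so as a $2$-group it is $\Z(2)^2$ or $D_{2^n}$ with $n\ge 3$. If $\langle a,b\rangle\cong\Z(2)^2$, then $\cS$ is an antichain of three points, hence not contractible. For $D_{2^n}$ with $n\ge3$, I expect $\cS$ to be contractible; in the generic picture, Theorem~\ref{thm:sectional-contractibility-modular} gives exactly this, since $D_{2^n}$ is not modular. I would check it directly by a Quillen-type retraction through the unique central involution, or through the maximal cyclic subgroup. Hence $A\sim B$ if and only if $\langle A,B\rangle\cong\Z(2)^2$. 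It follows that every Klein four-subgroup yields a triangle on its three involutions.

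\textbf{Step 2: if $m_2(G)\ge3$, the graph is not planar.} Take $E\cong\Z(2)^3\le G$. Any two of its $7$ involutions generate a Klein four-group, so by Step~1 they span a copy of $K_7\supseteq K_5$. By Kuratowski's theorem, $\Gamma_{\mathrm{ct}}(G)$ is then not planar. This also shows that $m_2(G)\le 2$ is the same as having no subgroup isomorphic to $\Z(2)^3$, since any elementary abelian subgroup of rank at least $3$ contains a copy of $\Z(2)^3$.

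\textbf{Step 3: if $m_2(G)\le 2$, the graph is $K_1\vee tK_2$.} Because $G$ is a nontrivial $2$-group, $Z(G)$ contains an involution $z$.
\begin{itemize}
\item If $V$ is a Klein four-subgroup with $z\notin V$, then $\langle V,z\rangle\cong\Z(2)^3$, which is a contradiction. So every Klein four-subgroup contains $z$, and in particular $z$ is the only central involution.
\item For every involution $x\ne z$, the group $\langle x,z\rangle$ is Klein, so $z$ is adjacent to every other vertex.
\item If $y\sim x$ with $x,y\neq z$, then $\langle x,y\rangle$ is a Klein group that contains $z$. Hence $\langle x,y\rangle=\{1,x,z,xz\}$ and $y=xz$.
\end{itemize}
So the involutions other than $z$ are paired off as $\{x,xz\}$, and these are the only edges not through $z$. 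This is the friendship (windmill) graph $K_1\vee tK_2$ with $t=(|\mathrm{Inv}(G)|-1)/2$; the case $m_2(G)=1$ gives $t=0$. Windmill graphs are planar, since they are unions of triangles glued at a single vertex, and this gives the converse direction.

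\textbf{Main obstacle.} I expect the hardest part to be Step~1, namely the contractibility of $\cS(D_{2^n})$ for $n\ge3$, or more generally matching the paper's exact definition of the graph via sections. My first attempt will be to use Theorem~\ref{thm:sectional-contractibility-modular}(iii)--(iv): if $\cS(D_{2^n})$ were not contractible, then $\mu$ would take the value $2$ there, whereas direct inspection shows $\cS(D_{2^n})$ has a conical contraction.
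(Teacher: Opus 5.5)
Your argument is correct and is essentially the paper's proof: adjacency means generating $\Z(2)^2$ (Proposition~\ref{prop:contractibility-commuting-graph}), a copy of $\Z(2)^3$ yields $K_7$, and otherwise every Klein four-subgroup contains a fixed central involution $z$, which forces the windmill structure on the pairs $\{x,xz\}$. For Step~1, do not appeal to Theorem~\ref{thm:sectional-contractibility-modular}, since non-modularity of $D_{2^n}$ only gives \emph{some} contractible pair in \emph{some} section; instead apply Lemma~\ref{crucialem} directly ($D_{2^n}$ with $n\geq 3$ has nontrivial Frattini subgroup, so $\cS(D_{2^n})$ is contractible), which also properly covers the cases $n>3$ that Lemma~\ref{lem:two-involutions-contractibility} glosses as $D_8$.
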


We conclude the work with the computation of magnitude homology of Pauli groups and some generalizations which appear in \cite{berk1, BBR, babaru, OR}.  The so-called ``large Pauli groups'' are almost extraspecial 2-groups and have been recently studied  in~\cite{BDDR}.

\begin{thm}[Magnitude Homology of  Almost Extraspecial $2$-Groups]
\label{thm:magnitude-almost-extraspecial}
Let $G=P_r$ be an almost extraspecial $2$-group of order
$2^{2r+2}$ with $r\geq1$ positive integers and  $s_r$ positive integer as per Lemma \ref{prop:number-subgroups-almost-extraspecial}
and  $\nu_\ell(r)$ positive integer as per Lemma \ref{prop:number-elementary-sections}. Then
\[
\MH_{k,\ell}\bigl(G_{\cL(P_r)}\bigr)
\cong
\begin{cases}
\mathbb Z^{\,s_r},
   &(k,\ell)=(0,0),\\[2mm]
\mathbb Z^{
  \,2^{\binom{\ell}{2}}\nu_\ell(r)},
   &k=\ell,\quad 1\leq\ell\leq2r+1,\\[2mm]
0,
   &\text{otherwise}.
\end{cases}
\]
In particular, the magnitude homology of $P_r$ is diagonal and torsion-free.
\end{thm}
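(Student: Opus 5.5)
We would prove Theorem~\ref{thm:magnitude-almost-extraspecial} by applying to the Hasse graph $G_{\cL(P_r)}$ the known structure theory of magnitude homology of posets from \cite{KY,ivanov}. For a finite poset $\mathcal P$ with Hasse graph distance, the magnitude chain complex splits as a direct sum over pairs $(x,y)$ with $x\le y$; more generally it splits over pairs of endpoints $(x_0,x_k)$. We first reduce to comparable pairs. The Hasse graph of a lattice is a median-type graph only in special cases, so here we use the result of \cite{KY} for posets whose intervals behave well. In degree $\ell$, the contribution of a comparable pair $x<y$ with $d(x,y)=\ell$ is isomorphic to the reduced homology of the open interval $(x,y)$, shifted by $2$:
\[
\MH_{k,\ell}\cong\bigoplus_{d(x,y)=\ell}\widetilde H_{k-2}((x,y)),
\]
plus a degree-$0$ term given by the points. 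For pairs at distance $1$ (covers), the interval is empty and contributes $\widetilde H_{-1}(\emptyset)=\mathbb Z$ in bidegree $(1,1)$.

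The key steps are the following. First, the bidegree $(0,0)$ piece is free on the vertices. Hence $s_r$ must be the total number of subgroups of $P_r$, which is Lemma~\ref{prop:number-subgroups-almost-extraspecial}.

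Second, we identify the intervals $[H,K]$ in $\cL(P_r)$ with noncontractible open part. By Crapo/Hall-type arguments in $p$-groups (Kratzer–Thévenaz), $\widetilde H_*((H,K))=0$ unless $H\trianglelefteq K$ and $K/H$ is elementary abelian. If $K/H\cong\Z(2)^\ell$, then $(H,K)$ is the proper part of the subspace lattice of $\mathbb F_2^\ell$. This proper part is Cohen–Macaulay with a single nonzero reduced homology group $\mathbb Z^{2^{\binom{\ell}{2}}}$ in degree $\ell-2$, since $|\mu|=2^{\binom\ell2}$. Moreover the Hasse distance between $H$ and $K$ equals the length $\ell$, because all maximal chains in a finite $p$-group lattice interval have length $\log_2|K:H|$. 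So each such section contributes $\mathbb Z^{2^{\binom\ell2}}$ exactly in bidegree $(\ell,\ell)$. Counting these sections gives $\nu_\ell(r)$ from Lemma~\ref{prop:number-elementary-sections}. The bound $\ell\le 2r+1$ holds because elementary abelian sections of $P_r$ have rank at most $2r+1$.

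Third, we must show that all other contributions vanish. This includes non-elementary intervals, where we use the contractibility above, and non-comparable endpoint pairs. This is the main obstacle. Magnitude homology of a Hasse graph can have off-diagonal classes coming from pairs that are joined by geodesics changing direction, namely zigzag paths through meets and joins.

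Our first attempt at this obstacle is the following. We would invoke the result of \cite{ivanov} (or an analogous "graded lattice" decomposition) stating that for a graded poset whose Hasse graph distance between incomparable $x,y$ realizes through $x\wedge y$ or $x\vee y$, the chain complex for the pair $(x,y)$ is acyclic. We would then verify the needed geodesic condition for $\cL(P_r)$ by using its $2$-group structure: $d(H,K)=d(H,H\cap K)+d(H\cap K,K)$ when $HK$ is a subgroup. If this fails in general, we fall back to a discrete Morse matching on the pair-complexes, pairing tuples by inserting or removing $H\cap K$. Torsion-freeness and diagonality then follow immediately from the computed form.
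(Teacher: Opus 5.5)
Your Steps~1 and~2 are the paper's proof. $\MH_{0,0}$ is free on the $s_r$ subgroups. An interval $(H,K)$ with $\mathrm{Frat}(K)\nleq H$ is contractible; the paper proves this with the closure operator $X\mapsto X\,\mathrm{Frat}(K)$ in Lemma~\ref{prop:intervals-finite-2-groups}. The remaining intervals are proper parts of subspace lattices of $\mathbb F_2^\ell$, each contributing $\mathbb Z^{2^{\binom{\ell}{2}}}$ in bidegree $(\ell,\ell)$, and they are counted by $\nu_\ell(r)$. Finally, $(1,P_r)$ is contractible.

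The gap is Step~3, which comes from reading $G_{\cL(P_r)}$ as an undirected graph. In the paper it is the \emph{directed} Hasse graph, with an edge $H\to K$ only when $K$ covers $H$. So $d(H,K)=\log_2|K:H|$ if $H\leq K$, and $d(H,K)=\infty$ otherwise. Consequently every tuple of finite length is a strict chain $H=X_0<X_1<\cdots<X_k=K$, and $\MC_{*,\ell}$ splits over comparable endpoint pairs only. Since $d$ is additive along chains, every face map $\partial_i$ preserves length. Hence the summand with endpoints $(H,K)$ is exactly the augmented chain complex of $\Delta((H,K))$ shifted by $2$. This is Theorem~\ref{thm:decompositionMH}, the only ingredient the paper uses beyond your first two steps. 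There are no incomparable or reversed pairs and no zigzag geodesics, so your ``main obstacle'' is vacuous.

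As you have set it up, however, Step~3 could not be completed, because in the undirected Hasse graph both the target formula and your proposed acyclicity statement are false. Each cover $H\lessdot K$ gives two generators $(H,K)$ and $(K,H)$ of $\MH_{1,1}$, so its rank would be $2\nu_1(r)$. Likewise, every reversed pair $K>H$ duplicates the contribution of $(H,K)$.

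For incomparable pairs, take $A=\langle X\rangle$ and $B=\langle -X\rangle$ in $P_1$. Their only common neighbours are $1=A\cap B$ and $V_X=\langle A,B\rangle$, which is exactly your geodesic hypothesis. Yet in length $2$ the pair complex has $\partial(A,1,B)=\partial(A,V_X,B)=-(A,B)$ and no chains in degree $3$. So $(A,1,B)-(A,V_X,B)$ is a nonzero class in $\MH_{2,2}$. Hence no acyclicity result or Morse matching of the kind you describe can exist. The stated formula is a statement about the directed graph, and the directedness observation above is what must replace your Step~3.
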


After recalling the notions and properties of magnitude homology in Section~\ref{sec:MH}, we review the basic concepts of group theory needed in this work  in Section \ref{s2}. Then, we proceed to prove Theorem \ref{thm:sectional-contractibility-modular}. We introduce the sectional contractibility graph in Section~\ref{s3}. Here are placed the proofs of Theorems \ref{cor:magnitude-characterization-modular} and  \ref{planarity}.
Finally, Section \ref{s4} is devoted to prove our last main result, namely Theorem \ref{thm:magnitude-almost-extraspecial}, and the computations. 

\subsection*{Acknowledgements}
The authors acknowledge support from the grants JSPS Bilateral Collaborations  Grant No. JPJSBP120256504
and 
KAKENHI  (Japanese Society for the Promotion of Science, Tokyo, Japan)
- Grant No. 26K2225. This work was supported also by the ``National
Group for Algebraic and Geometric Structures, and their Applications'' (GNSAGA - INdAM). 

\subsection*{AI usage statement}
ChatGpt Astra was used for image generation and as   computational aid in the computation of the magnitude homology of the Pauli groups.

\section{Magnitude homology}
\label{sec:MH}
In this section we recall  the definition of magnitude homology of graphs and posets. We will mainly follow~\cite{HW, asao}.

First, observe that a connected undirected graph can be seen as a metric space with the path metric -- the distance between two vertices of the graph being given by the length of a shortest path connecting them. 
To be more precise, the points of the metric space associated to a graph~$\Gamma$ are its vertices, and edges are declared to have length~$1$. Concretely, the metric~$d_{\Gamma}$ on $\Gamma$ is  given by
\begin{align*}
    d_{\Gamma}(v,w)\coloneqq \min  \{ d_{\Gamma}(v,v_1) + &\sum_{i=1}^{k-2} d_{\Gamma}(v_i,v_{i+1}) + d_{\Gamma}(v_{k-1},w) \mid \\
    & \{v,v_1\}, \{v_{k-1},w\}, \{v_i,v_{i+1}\}\in E(\Gamma), i=1,\dots, k-2 \}
\end{align*}
for $v,w$ vertices of $\Gamma$. If the graph is not connected, we set $d_{\Gamma}(v,w)\coloneqq \infty$ for every $v,w\in \Gamma$ not connected by any path; hence, for non-connected graphs, $d_{\Gamma}$ is an extended metric. For a $(k+1)$-tuple~$(v_0,\dots,v_k)$ of vertices of $\Gamma$, with $v_i\neq v_{i+1}$ and $d_{\Gamma}(v_i,v_{i+1})<\infty$ for each~$i$, the \emph{length} of $(v_0,\dots,v_k)$ in $\Gamma$ is the number 
\[
\ell(v_0,\dots,v_k)\coloneqq \sum_{i=0}^{k-1}d_{\Gamma}(v_i,v_{i+1}) \ .
\]
We  recall the definition of magnitude chain groups. Let $R$ be a commutative unital ring.

\begin{definitionn}\label{def:magnitude}
    Let $\Gamma$ be a graph, and $l,k\in \mathbb{N}$ natural numbers. We let
    \[
    \MC_{k,l}(\Gamma;R)\coloneqq R\langle (v_0,\dots,v_k) \mid v_0\neq \dots\neq v_k, \ell(v_0,\dots,v_k)=l \rangle 
    \]
    be the free $R$-module on the paths of length $l$ on $k+1$ vertices  of $\Gamma$. 
    The differential
    $
    \partial\colon\MC_{k,l}(\Gamma;R)\to\MC_{k-1,l}(\Gamma;R)
    $
    is defined on $(k+1)$-uples $(v_0,\dots,v_k)$ by
    \[
    \partial(v_0,\dots,v_k)\coloneqq \sum_{i=1}^{k-1} (-1)^i\partial_i(v_0,\dots,v_k) \ ,
    \]
    where $\partial_i(v_0,\dots,v_k)=(v_0,\dots,v_{i-1},v_{i+1}, \dots, v_k)$  assuming that \[\ell(v_0,\dots,v_k)=l=\ell(v_0,\dots,v_{i-1},v_{i+1}, \dots, v_k)\ ,\] and $\partial_i$ is set to be $0$ otherwise. 
\end{definitionn}

For a given $l\in \mathbb{N}$, the pair $(\MC_{*,l}(\Gamma;R), \partial)$ is a chain complex by \cite[Lemma~2.11]{HW}.

\begin{definitionn}[{\cite[Definition~2.4]{HW}}]
The \emph{magnitude homology}~$\MH_{*,*}(\Gamma;R)$ of $\Gamma$ is defined as the bigraded $R$-module $\bigoplus_{k,l\geq 0} \MH_{k,l}(\Gamma;R)$, where
    \[
    \MH_{k,l}(\Gamma;R)\coloneqq H_k(\MC_{*,l}(\Gamma;R), \partial)
    \]
    is given by the homology of the magnitude chain complex.
\end{definitionn}

Magnitude homology is an homology theory of  graphs, and it is functorial with respect to contractions. We recall that a contraction of a graph $\Gamma$ with respect to an edge~$e$ is the graph obtained from $\Gamma$ by contracting $e$ to a point. Then, if $\Gamma$ and $\Gamma'$ are graphs, we consider maps $\phi\colon \Gamma\to \Gamma'$ of vertices that preserve or contract each edge of~$\Gamma$. Observe that such maps do not increase the length of tuples of vertices of $\Gamma$: that is we have
$
\ell(\phi(v_0),\dots,\phi(v_k))\leq \ell(v_0,\dots,v_k) 
$.
Every contraction $\phi\colon \Gamma\to \Gamma'$ of graphs induces a chain map
\[
\phi_\# \colon \MC_{*,*}(\Gamma;R)\to \MC_{*,*}(\Gamma';R) 
\]
which to a tuple $(v_0,\dots,v_k)$ of $\Gamma$ associates the tuple $(\phi(v_0),\dots,\phi(v_k))$ if  $\ell(\phi(v_0),\dots,\phi(v_k))$ equals the length $\ell(v_0,\dots,v_k)$, and it is set to be $0$ otherwise. The map $\phi_\#$ is a chain map, as it commutes with the differential $\delta$, and it induces a map
in magnitude homology. If we denote by $\mathbf{Graph}$ the category of graphs and contractions, we have the following

\begin{prop}[{\cite[Proposition~3.3]{HW}}]
    Magnitude homology is a functor
    \[
    \MH_{*,*}\colon \mathbf{Graph}\to \mathbf{BiGrMod}_R
    \]
    from the contraction category of graphs to the category of bigraded $R$-modules.
\end{prop}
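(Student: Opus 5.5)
The plan is to verify the two ingredients of functoriality directly from Definition~\ref{def:magnitude}. First, for each contraction $\phi\colon\Gamma\to\Gamma'$, the map $\phi_\#$ is a chain map that preserves the bigrading. Second, the assignment $\phi\mapsto\phi_\#$ respects identities and composition at the chain level. Passing to homology then gives a functor $\MH_{*,*}$, since homology of chain complexes is itself functorial.

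The only geometric input is the following fact: a contraction does not increase distances, $d_{\Gamma'}(\phi(v),\phi(w))\le d_\Gamma(v,w)$. Summing over consecutive pairs gives $\ell(\phi(v_0),\dots,\phi(v_k))\le \ell(v_0,\dots,v_k)$. Two remarks go with this.
\begin{itemize}
\item If $\phi$ identifies two consecutive vertices, that summand drops from at least $1$ to $0$. So every tuple whose image is degenerate is automatically sent to $0$, and $\phi_\#$ is well defined with values in $\MC_{k,l}(\Gamma';R)$. The bigrading is preserved because $\phi_\#$ is nonzero only when the length is unchanged.
\item The triangle inequality in any graph gives $\ell(\partial_i$-deleted tuple$)\le\ell$(tuple). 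Moreover, deletion commutes with applying $\phi$ on the level of tuples: $\partial_i$ applied to $\phi(x)$ and $\phi$ applied to $\partial_i x$ are the same tuple. Here I write $x=(v_0,\dots,v_k)$ and $\phi x$ for its image, so $\ell(\partial_i\phi x)=\ell(\phi\partial_i x)$.
\end{itemize}

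The main step, and the only one requiring care, is showing $\partial\phi_\#=\phi_\#\partial$. I compare the $i$-th face terms on both sides, for a basis tuple $x$ of length $l$, by case analysis.
\begin{itemize}
\item \emph{Case $\ell(\phi x)<l$.} Then $\phi_\# x=0$, so the left side vanishes. On the right, the $i$-th term survives only if $\ell(\phi\partial_i x)=\ell(\partial_i x)=l$. But $\ell(\phi\partial_i x)=\ell(\partial_i\phi x)\le\ell(\phi x)<l$, so that term also vanishes.
\item \emph{Case $\ell(\phi x)=l$ and $\ell(\partial_i x)<l$.} The right-hand $i$-th term is $0$. For the left-hand term, $\ell(\partial_i\phi x)=\ell(\phi\partial_i x)\le\ell(\partial_i x)<l=\ell(\phi x)$, so it is $0$ as well.
\item \emph{Case $\ell(\phi x)=l=\ell(\partial_i x)$.} Both sides have $i$-th term equal to the same tuple $\phi\partial_i x=\partial_i\phi x$ if its length is $l$, and $0$ otherwise.
\end{itemize}
The signs $(-1)^i$ agree on both sides, so the two differentials coincide.

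Finally, $(\mathrm{id})_\#=\mathrm{id}$ is immediate. For composable contractions $\phi$ and $\psi$, we have $\ell(\psi\phi x)\le\ell(\phi x)\le\ell(x)$. Hence $\ell(\psi\phi x)=\ell(x)$ holds if and only if both inequalities are equalities. This is exactly the condition under which $\psi_\#\phi_\# x$ is nonzero, and in that case both sides equal the tuple $\psi\phi x$. Thus $(\psi\phi)_\#=\psi_\#\phi_\#$, and applying $H_*$ degreewise in each fixed length $l$ yields the functor $\MH_{*,*}\colon\mathbf{Graph}\to\mathbf{BiGrMod}_R$. I expect no real obstacle beyond keeping track, in the chain-map step, of the cases where a face or an image drops length.
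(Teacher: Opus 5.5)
Your proof is correct and is essentially the standard Hepworth--Willerton argument, which the paper cites without reproducing; the paper itself only asserts that $\phi_\#$ commutes with the differential. Your three-case analysis amounts to observing that the $i$-th terms of $\partial\phi_\# x$ and $\phi_\#\partial x$ are both nonzero exactly when $\ell(\phi\partial_i x)=l$, and in that case both equal $(-1)^i\phi\partial_i x$.
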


The definition of magnitude homology can be generalized verbatim to arbitrary \emph{directed} graphs, with the observation that a directed graph  can be seen as a metric space with the path metric~$d$, where $d(v,w)$ is the minimum length across all \emph{directed} paths between $v$ and $w$ -- see, e.g.~\cite{asao}.  Magnitude homology is known to satisfy a K\"unneth Theorem with respect to the box product of undirected graphs~\cite[Theorem~5.3]{HW}. For generalized metric spaces, including directed graphs, this was shown in \cite[Theorem~4.6]{roff}.

In the following, diagonal graphs will play a prominent role. Recall that a graph is called \emph{diagonal} if its magnitude homology is concentrated on the diagonal, that is $\MH_{k,l}(G)=0$ for $k\neq l$~\cite[Definition~7.1]{HW}. Diagonality of graphs was shown to be connected to relevant properties of the graphs at hand~\cite{HW, bottinelli2020magnitude, asao2024girth, ivanov}. Furthermore, it is preserved under taking Cartesian products of graphs~\cite[Proposition~7.3]{HW} and any non-trivial join of graphs is diagonal~\cite[Theorem~7.5]{HW}.

\begin{example}[{\cite[Section 7]{HW}}]
\label{ex:treesdiagonal}
    Trees and complete graphs are diagonal.
\end{example}

We point out here that the notion of diagonality for graphs extends to digraphs~\cite{ivanov} in the obvious way.

We now focus on the magnitude homology of posets. We first need to recall some terminology on posets.  
Let $(\mathcal{P},\leq)$ be a finite poset. We call $(\mathcal{P},\leq)$ \emph{ranked} if all maximal chains in $\mathcal{P}$ have the same number of elements. Recall that $y$ covers $x$ in $\mathcal{P}$ if $x< y$ and there is no $z$ in $\mathcal{P}$ with $x< z< y$. If $\mathcal{P}$ is a finite ranked poset there is a uniquely defined rank function $r\colon \mathcal{P}\to\N$ such that:
\begin{enumerate}
    \item $r(x)=0$ if and only if $x$ is a minimal element in $\mathcal{P}$;
    \item if $y$ covers $x$, then $r(y)=r(x)+1$.
\end{enumerate}
Observe that all maximal elements in a ranked poset have the same rank~$d+1$, which we call the \emph{dimension} of $\mathcal{P}$. Let $\mathcal{P}$ be  a ranked poset with rank function $r$, and  $R$ be a ring. We say that $\mathcal{P}$ is \textit{Cohen-Macaulay} over $R$  if for each open interval $(x,y)$ in $\mathcal{P}$ we have 
\[
\widetilde{H}_{i}((x,y);R)=0
\]
for $i\neq r(y)-r(x)-2$. We further recall that if $\mathcal{P}$ is \textit{lexicographically shellable}, then $\mathcal{P}$ is Cohen-Macaulay over $\Z$ and over all fields -- see, e.g.~\cite[Theorem~3.2]{BGS}.  This implies that the top homology in a  Cohen-Macaulay poset is completely described by the M\"obius function. In fact, we have:
\[
\dim_\mathbb{F} \widetilde{H}_{r(y)-r(x)-2}((x,y); \mathbb{F})=(-1)^{r(y)-r(x)}\mu(x,y)\geq 0
\]
where $\mathbb{F}$ is a field and $\mu$ denotes its M\"obius function.

 If $\mathcal{P}$ is a finite ranked poset, we can consider the directed graph $G_\mathcal{P}$ 

\begin{enumerate} 
\item[$\bullet$] whose vertices $V(G_\mathcal{P})$ are the elements of $\mathcal{P}$; 
\item[$\bullet$] whose elements  $x$ and $y$ are joined, i.e.~$(x,y)\in E(G_\mathcal{P})$, if and only if $y$ covers $x$. 
\end{enumerate}
Observe that, if $r$ is the rank function on $\mathcal{P}$, then the path distance $d_{G_\mathcal{P}}(x,y)$ in $G_\mathcal{P}$ is given by
\[
d_{G_\mathcal{P}}(x,y)=r(y)-r(x)\ .
\]
The magnitude homology of the poset $P$ is then set to be the magnitude homology of the directed graph~$G_\mathcal{P}$. We will use the following characterization of magnitude homology of posets:

\begin{thm}[{\cite[Proposition~7.2]{ivanov}}]\label{thm:decompositionMH}
    Let $\mathcal{P}$ be a ranked poset. Then, for $\ell\geq 1$ we have:
    \[
    \MH_{k,\ell}(G_\mathcal{P})\cong \underset{x<y}{\underset{r(y)-r(x)=\ell} \bigoplus}  \widetilde{H}_{k-2}((x,y)) \ ,
    \]
    where $(x,y)$ denotes the open interval in $\mathcal{P}$. 
\end{thm}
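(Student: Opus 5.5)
We plan to prove this by splitting the magnitude chain complex $\MC_{*,\ell}(G_\mathcal{P})$ into a direct sum of subcomplexes, one for each pair $x<y$ with $r(y)-r(x)=\ell$. We then identify each summand with a shifted copy of the augmented simplicial chain complex of the order complex of the open interval $(x,y)$.

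First we describe the generators. In the directed Hasse graph $G_\mathcal{P}$, the distance $d(v,w)$ is finite exactly when $v\le w$, and then it equals $r(w)-r(v)$, as observed above. A tuple $(v_0,\dots,v_k)$ contributes to $\MC_{k,\ell}$ only if $v_i\neq v_{i+1}$ and $d(v_i,v_{i+1})<\infty$ for all $i$. Hence the generators are exactly the strict chains $v_0<v_1<\dots<v_k$ in $\mathcal{P}$. Their length telescopes to $\ell(v_0,\dots,v_k)=r(v_k)-r(v_0)$. Two consequences follow.
\begin{enumerate}
\item[(a)] The length depends only on the endpoints, so removing any inner vertex $v_i$ with $0<i<k$ gives again a strict chain of the same length. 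Thus every face map $\partial_i$, $1\le i\le k-1$, is nonzero.
\item[(b)] The differential never changes $v_0$ or $v_k$.
\end{enumerate}
By (b), $\MC_{*,\ell}(G_\mathcal{P})=\bigoplus_{x<y,\ r(y)-r(x)=\ell}\MC_{*}^{x,y}$, where $\MC_{*}^{x,y}$ is spanned by chains starting at $x$ and ending at $y$. The hypothesis $\ell\ge1$ guarantees $x\neq y$. Homology then splits accordingly.

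Next we identify each summand $\MC^{x,y}_*$. A generator $(x,v_1,\dots,v_{k-1},y)$ corresponds bijectively to the chain $v_1<\dots<v_{k-1}$ in $(x,y)$, that is, to a $(k-2)$-simplex of the order complex $\Delta((x,y))$. The case $k=1$ gives the single generator $(x,y)$, which corresponds to the empty simplex in degree $-1$. By (a), the differential is $\sum_{i=1}^{k-1}(-1)^i\partial_i$, where $\partial_i$ deletes the $(i-1)$-st vertex of the simplex. In degree $k=1$ the differential is zero, since the sum is empty. For $k=2$, each vertex $(x,v,y)$ maps to $-(x,y)$, which is minus the augmentation.

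So $\MC^{x,y}_{*}$ is the augmented simplicial chain complex $\widetilde C_{*-2}(\Delta((x,y)))$, with every differential multiplied by $-1$. The chain isomorphism $c\mapsto(-1)^{k}c$ in degree $k$ (or simply the observation that negating all differentials does not change kernels or images) identifies the two complexes. This gives $H_k(\MC^{x,y}_*)\cong\widetilde H_{k-2}((x,y))$. Summing over the pairs $(x,y)$ yields the stated decomposition.

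The only delicate point is step (a). In a general digraph some face maps vanish because deleting a vertex can shorten the tuple, which would break the identification with simplicial chains. Here this is ruled out because the Hasse graph of a ranked poset has the distance formula $d=r(w)-r(v)$. Once (a) is verified, the rest is bookkeeping of degree shifts and signs, including the edge case where $(x,y)$ is empty (then $y$ covers $x$, and $\widetilde H_{-1}(\emptyset)=\mathbb Z$ matches the single generator in $\MH_{1,1}$).
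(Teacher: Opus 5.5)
Your proof is correct and complete. You split $\MC_{*,\ell}(G_\mathcal{P})$ by the endpoints $(v_0,v_k)$, use that the length telescopes to $r(v_k)-r(v_0)$ so every inner face map survives, and identify each summand with the degree-shifted, sign-twisted augmented chain complex $\widetilde C_{*-2}(\Delta((x,y)))$, handling the cases $k=1$ and $(x,y)=\varnothing$ correctly. The paper gives no proof of its own and only cites \cite{ivanov}; your argument is the standard endpoint-decomposition proof of this fact (compare the order-complex description of magnitude chains in \cite{KY}).
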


As a consequence, if $\mathcal P$ is a ranked Cohen-Macaulay poset, then $G_{\mathcal P}$ is diagonal. We shall use this fact several times.

\section{A sectional contractibility criterion}\label{s2}

The aim of this section is to provide a sectional contractibility criterion for finite $2$-groups. First we recall some notions on the posets of subgroups associated to any finite group, and then we prove the criterion.

\subsection{The posets of subgroups}
For a nontrivial finite group $G$, we shall  consider the poset \[ \cS(G)=(1,G) =\{\,H\leq G\mid 1<H<G\,\} \ , \] of its subgroups, ordered by inclusion. We shall also consider the subgroups lattice $\cL(G)$ of~$G$ (including $1$ and $G$). Observe that $\cL(G)$ contains  $ \cS(G)$ properly. Denote by $\langle X\rangle$ the subgroup generated by $X$. Following \cite{S}, $\cL(G)$ is  \emph{modular} if for all  $A,B,C \in \cL(G)$ we have $\langle A,B\cap C\rangle   = \langle A,B\rangle\cap C$
whenever $A $ is a subgroup of $C$. Generally, we refer to a group $G$ whose subgroup lattice $\cL(G)$ is modular as  \textit{modular group} (or we just say that $G$ is modular).

For the sake of completeness, it is useful to remind here the definition of the \emph{order complex}~$\Delta(\mathcal{P})$ of a poset~$\mathcal{P}$. 
This is the abstract simplicial complex whose vertices are the elements of $\mathcal{P}$ and whose $k$-dimensional simplices are the strictly increasing chains $ x_0<x_1<\cdots <x_k$ in~$\mathcal{P}$. The geometric realization of $\Delta(\mathcal{P})$ will be denoted by  $|\Delta(\mathcal{P})|.$ When discussing the homotopy type (resp.~the homology) of $\mathcal{P}$, we shall mean the homotopy type (resp.~the homology) of~$|\Delta(\mathcal{P})|$. Thus, for instance, if $j$ is a nonnegative integer, then \[ \widetilde H_j(\mathcal{P};R) \coloneqq\widetilde H_j(|\Delta(\mathcal{P})|;R) \] for every coefficient ring $R$.

We are interested in conditions for which $\cS(G)$, or more generally an interval $(H,K)$ in $\cL(G)$ with $H,K$ subgroups of $G$, is contractible. By \cite[Proposition~1.8]{kratzer}, $\cS(G)$ is contractible if $G$ admits a normal subgroup without complement. An example is given by the Frattini subgroup $\mathrm{Frat}(G)$. For $H\leq G$, define $\mathrm{Frat}(H,G)$ to be the intersection of all maximal subgroups of $G$ containing $H$.

\begin{lemma}[{\cite[Lemma 4.1]{kratzer}}]\label{kratzerlemma}
    If $\mathrm{Frat}(H,G)\neq H$, then the interval $(H,G)$ is contractible.
\end{lemma}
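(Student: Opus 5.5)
This is Kratzer's Lemma 4.1, and the plan is to use Quillen's fiber lemma in its closure-operator form. A monotone map $f\colon \mathcal P\to\mathcal P$ with $f(x)\ge x$ for all $x$ induces a homotopy equivalence between $\mathcal P$ and its image. If the image has a greatest element, the image is a cone, so $\mathcal P$ is contractible.

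Write $F=\mathrm{Frat}(H,G)$. We have $H\le F$ by definition, and $F\le G$.

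First suppose $F=G$. Then no maximal subgroup of $G$ contains $H$. If $H<G$, then $H$ lies in some maximal subgroup of $G$, so this case forces $H=G$. The interval is then empty and the statement has to be read with this degenerate case excluded. I will therefore assume $H<F<G$. Note that $F$ is an element of the open interval $(H,G)$.

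Next define $f\colon (H,G)\to (H,G)$ by $f(K)=\langle K,F\rangle$, the join of $K$ and $F$ in $\cL(G)$. The one point to check is that $f(K)$ stays strictly below $G$.

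1. Since $K<G$ is a proper subgroup of the finite group $G$, it lies in some maximal subgroup $M$.
2. Since $H\le K\le M$, the subgroup $M$ is one of the maximal subgroups containing $H$.
3. Hence $F\le M$, so $\langle K,F\rangle\le M<G$.
4. Also $f(K)\ge K>H$, so $f(K)$ lies in $(H,G)$.

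The map $f$ is clearly monotone and satisfies $f(K)\ge K$. Consequently, via the homotopy $K\le f(K)\ge F$, the identity is homotopic to the constant map at $F$.

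Concretely, there are order-preserving maps with comparability relations $\mathrm{id}\le f\ge c_F$, where $c_F$ is the constant map at $F$ and $F\le f(K)$ for every $K$. By Quillen's homotopy property for comparable poset maps, these give $\mathrm{id}\simeq f\simeq c_F$ on $|\Delta((H,G))|$. So the interval is contractible.

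The only real obstacle is step 1: ensuring $\langle K,F\rangle$ is proper. Finiteness, which guarantees that every proper subgroup lies in a maximal one, handles this.
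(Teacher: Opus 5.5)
Your argument is correct and is the standard one. The paper gives no proof here (it only cites Kratzer--Th\'evenaz), but your conical contraction $K\le\langle K,F\rangle\ge F$ is the same closure-operator device the paper uses in the proof of Lemma~\ref{prop:intervals-finite-2-groups}(i), with $X\mapsto X\,\mathrm{Frat}(K)$.

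Two cosmetic points. The case $F=G$ needs no special reading, since it forces $H=G=F$ and is therefore excluded by the hypothesis. Also, the image of $f$ has least element $F$, not a greatest one; this is harmless, because your explicit chain $\mathrm{id}\le f\ge c_F$ already gives contractibility.
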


As a corollary we have that if the Frattini subgroup is nontrivial, then $\cS(G)$ is contractible.
In most cases instead, the open intervals have the homotopy type of a wedge of spheres;

\begin{remark}\rm For  a finite \(p\)-group $G$ with $H$ proper subgroup, set $n=\log_p  |G:H|$. Then, note that  for any \(n\geq2\) one has
\[|\Delta((H,G))| \simeq \bigvee_{p^{\binom n2}} S^{n-2} \ \ \ \ \  \ \ \ \mbox{if} \ \ \mathrm{Frat} (G)\leq H\ ,\]
while $|\Delta((H,G))|$ is the empy complex if $\mathrm{Frat}(G)\nleq H$. \end{remark}

For every finite group \(G\) and every proper subgroup \(H<G\), one  has
\[\mu(H,G)= \widetilde \chi \bigl(\Delta((H,G))\bigr)\ ,
\]
where $\mu(H,G)$ is the M\"obius function of $\mathcal{S}(G)$
 and $\widetilde \chi$ the reduced Euler characteristic of the associated order complex; see \cite{bjorner80, bjorner81, kratzer, quillen78, Shareshian01}. 

We recall also the following theorem which will be helpful in the follow-up;

\begin{thm}[{\cite[Theorem 3.3]{bjorner80}}]
\label{thm:bjorner}
  Let $G$ be a finite group. Then, the following conditions are equivalent. 
  \begin{enumerate}
      \item[{\rm (i).} ] $G$ is supersolvable;
      \item[{\rm (ii).} ]  $\cL(G)$ is Cohen-Macaulay;
      \item[{\rm (iii).} ]  $\cL(G)$ is lexicographically shellable.
  \end{enumerate}
\end{thm}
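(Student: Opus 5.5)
We would prove the three implications (i)$\Rightarrow$(iii)$\Rightarrow$(ii)$\Rightarrow$(i), following Björner's original approach.

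The implication (iii)$\Rightarrow$(ii) needs no new work. As recalled in Section~\ref{sec:MH}, a lexicographically shellable bounded poset is Cohen--Macaulay over $\mathbb Z$ and over every field \cite[Theorem~3.2]{BGS}. Lexicographic shellability includes gradedness, and every open interval of $\cL(G)$ inherits an EL-labelling. So every interval has its reduced homology concentrated in the top degree $r(y)-r(x)-2$.

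For (ii)$\Rightarrow$(i), we use only the part of the Cohen--Macaulay hypothesis that says $\cL(G)$ is ranked: every maximal chain of subgroups $1=H_0<H_1<\dots<H_n=G$ has the same length. We then invoke Iwasawa's theorem: a finite group is supersolvable if and only if all maximal chains in $\cL(G)$ have the same length. If one prefers a self-contained argument, proceed by induction on $|G|$. Subgroups and intervals $[1,H]$ inherit gradedness, so all proper subgroups are supersolvable. A minimal non-supersolvable group has a well-understood structure (Huppert/Doerk): it is solvable, with a unique minimal normal subgroup $N$ of non-prime order complemented by a maximal subgroup $M$. Comparing a chain through $M$ with a chain through a maximal subgroup containing $N$ then yields two maximal chains of different lengths, a contradiction.

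The substantive implication is (i)$\Rightarrow$(iii). Fix a chief series $1=N_0<N_1<\dots<N_n=G$ of normal subgroups with $|N_i:N_{i-1}|$ prime, which exists because $G$ is supersolvable. Label each covering pair $H\lessdot K$ in $\cL(G)$ by
\[
\lambda(H,K)=\min\{\,i \mid K\le HN_i\,\}.
\]
Each $N_i$ is normal, so $HN_i$ is a subgroup and is a modular element of the lattice. The proof then proceeds in four steps.

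\begin{enumerate}
\item Show that $|K:H|$ is the prime $|N_i:N_{i-1}|$ for $i=\lambda(H,K)$. In particular $\cL(G)$ is graded, with rank function $r(H)=\log$ of the number of prime factors of $|H|$ counted with multiplicity (equivalently, the length of the chain $H\cap N_i$).
\item In any interval $[H,K]$, show that the chain $H_j=H(N_j\cap K)$, with repetitions removed, is the unique maximal chain whose labels strictly increase.
\item Check that the label sequence of this chain is lexicographically smallest among all maximal chains of $[H,K]$. This is done by comparing first labels: the first step of any other chain has label at least the least index $i$ with $H<H(N_i\cap K)$.
\end{enumerate}

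This shows $\lambda$ is an EL-labelling, hence (iii).

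We expect the main obstacle to be step 2, namely the uniqueness of the increasing chain. To establish it, we would argue that in an increasing chain the labels are forced to be exactly the indices where $H(N_j\cap K)$ jumps. This uses the modular (Dedekind) identity $HN_j\cap K=H(N_j\cap K)$, which is valid because $N_j$ is normal, together with an order count via step 1.
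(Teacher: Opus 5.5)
Your outline is correct and is essentially Björner's own proof of \cite[Theorem~3.3]{bjorner80}, which the paper cites without giving an argument. You use Iwasawa's gradedness criterion for (ii)$\Rightarrow$(i), the chief-series labelling $\lambda(H,K)=\min\{i\mid K\le HN_i\}$ (the standard EL-labelling of a supersolvable lattice) for (i)$\Rightarrow$(iii), and \cite{BGS} for (iii)$\Rightarrow$(ii). Two repairs are needed. First, your optional self-contained route misstates the structure of minimal non-supersolvable groups ($\mathrm{SL}(2,3)$ is one, and its only minimal normal subgroup has order $2$), so you must also use that every $[N,G]\cong\cL(G/N)$ is graded to reduce to $\mathrm{Frat}(G)=1$ before extracting a complemented minimal normal subgroup of non-prime order. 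Second, in step~2 an order count cannot force the labels of an increasing chain $H=Y_0\lessdot\cdots\lessdot Y_m=K$, $\lambda_1<\cdots<\lambda_m$, when the primes $|N_i:N_{i-1}|$ repeat (as for $2$-groups); instead, run the Dedekind argument downward from $K$ to get $HN_j\cap K\le Y_t$ for all $j<\lambda_{t+1}$, which together with $Y_t\le HN_{\lambda_t}\cap K$ pins down $Y_t=HN_{\lambda_t}\cap K$ directly.
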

 
Let $G$ be a supersolvable group. Suppose  that one has a homotopy equivalence
$|\Delta((H,G))|
\simeq
\bigvee_b S^{n-2}.$  Thus ,  we have
\[
|\Delta((H,G))|
\simeq
\bigvee_{|\mu(H,G)|}S^{n-2}\ .
\]
A wedge of zero spheres is interpreted as a one-point space. Here supersolvability ensures that the subgroup lattice is graded and lexicographically shellable. Shellability passes to intervals and their proper parts.
Details can be found in \cite{quillen78, zbMATH02012754, Shareshian01}.

\subsection{On the characterizations of modularity}
In this section we prove the sectional contractibility criterion for $2$-groups given in Theorem~\ref{thm:sectional-contractibility-modular}. First, we need to introduce some terminology.

\begin{definitionn}[{\cite[Vol.II, Appendix 17]{berk1}}]\label{qstaredstargroups}
Denoting  the finite generalized quaternion $2$-group  for some $m \ge2$ by \[Q_{2^{m+1}}=\langle a,b \mid a^{2^m}=1, b^2=a^{2^{m-1}}, b^{-1}ab=a^{-1}\rangle\ ,\]
 we say that a finite $2$-group $G$ is a $Q^\times$-group if for some $r \ge 0$ 
\[G=Q_{2^{m+1}} \times \underbrace{\mathbb{Z}(2) \oplus \ldots \oplus \mathbb{Z}(2)}_{r-\mathrm{times}}= Q_{2^{m+1}} \times {\mathbb{Z}(2)}^r\ .
\] 
Denoting the finite generalized dihedral $2$-group for some $m \ge 2$  by
\[D_{2^{m+1}}=\langle a,b \mid a^{2^m}=b^2=1, b^{-1}ab=a^{-1}\rangle\ ,
\]
we say that a finite $2$-group $G$ is  a $D^\times$-group if 
for some $s \ge 0$
\[G= D_{2^{m+1}} \times \underbrace{\mathbb{Z}(2) \oplus \ldots \oplus \mathbb{Z}(2)}_{s-\mathrm{times}} = D_{2^{m+1}} \times {\mathbb{Z}(2)}^s\ .
\] 
\end{definitionn}

Definition \ref{qstaredstargroups} gives the usual \emph{quaternion group}  $Q_8$ of order $8$  and the usual \emph{dihedral group}~$D_8$ of order $8$ for $m=2$. Recall that finite groups in which every pair  $(H,K) \in \cL(G) \times \cL(G)$ satisfies the condition $HK=KH$, that is,  those finite groups in which every two  subgroups commute, are called \textit{quasihamiltonian}. Recall also  that finite groups in which every subgroup is normal are called \textit{hamiltonian groups}. Hamiltonian 2-groups are of the form \[ Q_8 \times {\mathbb{Z}(2)}^r\] for some $r \ge 0$ by a result of Dedekind and Hamilton
 \cite[Vol. I, \S 1, Theorem 1.20]{berk1}. This structure is peculiar; in fact, finite hamiltonian 2-groups are $Q^\times$-groups when $m=2$.  

 Recall that a \textit{section} of a finite group $G$ is an epimorphic image of some subgroup of $G$. 

\begin{definitionn}[{\cite[Vol.II, \S 56]{berk1}}]\label{qfree} A finite $2$-group $G$ is \emph{quaternion-free} if it has no sections isomorphic to $Q_8$. A finite $2$-group  is \emph{dihedral-free} if it has no sections isomorphic to $D_8$.
\end{definitionn}

 Iwasawa \cite[Theorem 2.4.14]{S} recognized among all quasihamiltonian groups, not only abelian groups and hamiltonian groups, but also modular groups, see  \cite[\S 73, Theorem 73.15, Vol.II]{berk1} and \cite[Chapter 13]{HHR}. In fact, the theory of Iwasawa successfully applies  to several types of algebraic structures, as illustrated in \cite{HHR, S}.  For finite $2$-groups, we have the following characterization;

\begin{thm}[{\cite[Proposition 79.6, \S 79, Vol. II]{berk1}}]\label{grandeiwasawa}
A finite 2-group $G$ is modular if and only if $G$ is dihedral-free. A finite 2-group $G$ is modular and quaternion-free if and only if $G$ contains an abelian normal subgroup $A$ with cyclic quotient $G/A$ and there exists an element $t \in G$  and an  integer $s \ge 2$  such that $G =\langle A,t \rangle$ and $t^{-1}at=a^{1+2^s}$ for all $a \in A$. 
\end{thm}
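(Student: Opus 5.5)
The statement is the case $p=2$ of Iwasawa's classification of modular $p$-groups, combined with a minimal-counterexample argument. I would organize the proof in three steps.

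\textbf{Step 1: modular implies dihedral-free.} I would use the lattice-theoretic inheritance of modularity. For $H\leq G$, the lattice $\cL(H)$ is the interval $[1,H]$ of $\cL(G)$. For $N\trianglelefteq H$, the correspondence theorem identifies $\cL(H/N)$ with the interval $[N,H]$. Intervals of modular lattices are modular, so every section of a modular group is modular. It then suffices to exhibit a pentagon $N_5$ in $\cL(D_8)$. Write $D_8=\langle a,b\rangle$ as in Definition~\ref{qstaredstargroups}. Then $\langle b\rangle<\langle b,a^2\rangle$, and $\langle ab\rangle$ meets both in $1$ and joins both to $D_8$. So $D_8$ is not modular, and a modular $2$-group is dihedral-free.

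\textbf{Step 2: dihedral-free implies modular.} I would argue by minimal counterexample. Let $G$ be a dihedral-free non-modular $2$-group of least order. Sections of $G$ are again dihedral-free, so every proper section of $G$ is modular, i.e.\ $G$ is minimal non-modular. The plan is then:
\begin{itemize}
\item invoke Iwasawa's theorem \cite[Theorem 2.4.14]{S} for the proper subgroups and quotients;
\item use the known classification of minimal non-modular $p$-groups in \cite[Vol.~II, \S 73]{berk1}. For $p=2$ these are $D_8$ and a short list of small groups.
\item check directly that each group on that list other than $D_8$ has a $D_8$ section. Typically $G/\Omega_1(Z(G))$ or a suitable maximal subgroup is dihedral of order $8$.
\end{itemize}
This contradicts dihedral-freeness. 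This step is the main obstacle, because it rests on the classification of minimal non-modular $2$-groups. If that classification is not taken as given, I would instead try to show directly that two non-permuting cyclic subgroups $\langle x\rangle,\langle y\rangle$ of minimal order generate a group whose quotient by a suitable central subgroup is $D_8$.

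\textbf{Step 3: the quaternion-free part.} By Iwasawa's theorem, a modular $2$-group is either of the form $Q_8\times\mathbb Z(2)^r$ (Hamiltonian), or it has an abelian normal subgroup $A$ with $G/A$ cyclic and $G=\langle A,t\rangle$, where $t$ acts on $A$ as the power map $a\mapsto a^{1+2^s}$ with $s\geq 2$.
\begin{itemize}
\item The Hamiltonian case contains $Q_8$, so it is excluded for quaternion-free groups. This gives the forward direction.
\item Conversely, a group of the stated form is modular by Iwasawa. For quaternion-freeness I would show that this shape passes to subgroups and quotients. Intersecting with $A$, or taking images, preserves ``abelian normal subgroup with cyclic quotient and universal power automorphism $1+2^s$, $s\geq2$''.
\item So it suffices to see that $Q_8$ itself has no such presentation. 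Every abelian subgroup $A$ of $Q_8$ with $Q_8/A$ cyclic has exponent at most $4$. Hence $a^{1+2^s}=a$ for all $a\in A$ when $s\geq2$, so $t$ would centralize $A$. Then $Q_8=\langle A,t\rangle$ would be abelian, a contradiction.
\end{itemize}
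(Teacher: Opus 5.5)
\textbf{Verdict.} Your Steps 1 and 3 are correct, but Step 2, which carries the whole converse ``dihedral-free $\Rightarrow$ modular'', is a genuine gap. The paper itself gives no proof of this statement; it only quotes Berkovich--Janko, so there is no internal argument to match.

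\textbf{Steps 1 and 3.} In Step 1, the pentagon $1<\langle b\rangle<\langle b,a^2\rangle<D_8$, $1<\langle ab\rangle<D_8$ shows $D_8$ is not modular. Since $\mathcal{L}(H/N)\cong[N,H]$ is a sublattice of $\mathcal{L}(G)$, this gives ``modular $\Rightarrow$ dihedral-free''. In Step 3, the quaternion-free characterization does follow from Iwasawa's structure theorem as you describe. One small point: for $H\leq G$ you get $H=(H\cap A)\langle a_0t^k\rangle$, whose induced power is $(1+2^s)^k$, not literally $1+2^{s'}$. Either re-choose the generator as a suitable odd power, or note that your $Q_8$ argument only needs an exponent $\equiv 1\pmod 4$, which sections obviously inherit.

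\textbf{The gap in Step 2.} Step 2 is not proved. Minimality of your counterexample gives that every proper \emph{section} is modular. In that sense, the claim that $D_8$ is the only minimal example is equivalent to the statement you are proving, so it cannot be cited. You therefore need the classification of nonmodular $2$-groups all of whose proper \emph{subgroups} are modular.

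That list is not ``$D_8$ and a short list of small groups''. Take the minimal nonabelian groups $G_n=\langle a,b\mid a^4=b^{2^n}=1,\ a^b=a^{-1}\rangle$, $n\geq 1$. Their maximal subgroups $\langle a,b^2\rangle$, $\langle b,a^2\rangle$, $\langle ab,a^2,b^2\rangle$ are all abelian. They are nonmodular, since $b^2$ is central and $G_n/\langle b^2\rangle\cong D_8$. So they already form an infinite family.

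Your suggested check also fails on this family. For $n\geq 2$, no subgroup of $G_n$ is isomorphic to $D_8$, because all proper subgroups are abelian. Moreover $\Omega_1(Z(G_n))=\langle a^2,b^{2^{n-1}}\rangle$ contains $G_n'=\langle a^2\rangle$, so $G_n/\Omega_1(Z(G_n))$ is abelian. The dihedral section is $G_n/\langle b^2\rangle$, which neither of your heuristics finds.

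As written, Step 2 is a citation of a result as deep as the theorem itself, inaccurately described, with the verification not carried out. To close it you have two options:
\begin{itemize}
\item give a precise statement and reference for that classification, and check a $D_8$ section for each of its infinite families; or
\item actually carry out your fallback argument. In a minimal counterexample every proper section is an Iwasawa group, and $G=\langle x,y\rangle$ for some non-permuting cyclic subgroups $\langle x\rangle,\langle y\rangle$. You must then produce a normal subgroup $N$ with $G/N\cong D_8$.
\end{itemize}
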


In particular, finite quasihamiltonian groups are nilpotent, then supersolvable. This allowed a series of  homotopical considerations to Shareshian and others \cite{BGS, bjorner80, Shareshian01, woodroofe}.

We wish to extend the computations in the last section to modular groups and  quasihamiltonian groups, and to this scope, it's useful to recall the following classical notion on products of groups -- see  \cite{berk1, babaru, BBR, OR, S}.

\begin{definitionn}\label{def:1}
Let $G_1$ and $G_2$ be  two finite groups  with $A_1 $ subgroup which is contained in the center $ Z(G_1)$ of $G_1$, similarly for $A_2$ in $ Z(G_2)$ and 
$\varphi \colon A_1 \rightarrow A_2$ isomorphism. Consider the subgroup $N_{\varphi}  = \{\,(a,\varphi(a)^{-1})\mid a\in A_1\,\}$ in the direct product 
 $G_1\times G_2=\{(g_1,g_2) \mid g_1 \in G_1, g_2 \in G_2\}$ which forms a group with the pointwise multiplication. Since $N_{\varphi}$ lies in $Z(G_1 \times G_2)$ by construction,   \[G_1 \circ G_2 =
\frac{G_1\times G_2}{N_{\varphi}}\]  is called  (external) central product of $G_1$ and $G_2$ (amalgamating
$A_1$ and $A_2$ via $\varphi$). 
\end{definitionn}

Note that there are several equivalent formulations of Definition \ref{def:1}, see  \cite[Vol. I, p.11]{berk1}. We  introduce a specific notation, when we have the dihedral group of order 8 \[D_8=\langle r,s\mid r^4=s^2=1,\ srs=r^{-1}\rangle  \] and  for the quaternion group of order 8 \[Q_8=\langle  i,j \mid i^4=j^4=1, \ (ij)^2=(ji)^{-1}\rangle\ ,\]  since both of them provide important evidences for  Definition \ref{def:1}. For instance, when $n \ge 1$ we write
\[
E^+_{{2}^{2n+1}} = \underbrace{D_8 \circ D_8 \circ \ldots  \circ D_8}_{n-\mathrm{times}} \ \qquad \ \mbox{and} \ \qquad E^-_{{2}^{2n+1}} = \underbrace{(D_8 \circ D_8 \circ \ldots  \circ D_8)}_{(n-1)-\mathrm{times}} \circ \ Q_8.
\]

The \textit{commutator subgroup} of two subgroups $H$ and $K$ of a group $G$ is denoted by $\mathrm{comm}(H,K)$. In particular, $\mathrm{comm}(G,G)=G'$ is the \textit{derived subgroup} of $G$ and it is always a normal subgroup with abelian quotient $G/G'$, see \cite{berk1, HHR, S}. For the following definition of extraspecial,  generalized extraspecial and almost extraspecial $p$-groups, see e.g.~\cite{berk1}.

\begin{definitionn}\label{defextrasp}
A  group $G$ of prime power order $|G|=p^n$  is \textit{extraspecial}, if \[Z(G)= \mathrm{comm}(G,G)=\mathrm{Frat}(G) \cong \mathbb{Z}(p)\ ,\] 
that is, if $Z(G)=\mathrm{comm}(G,G)=\mathrm{Frat}(G)$ is cyclic of order $p$. Instead, we say that $G$  is \textit{almost extraspecial}, if $\mathrm{comm}(G,G)=\mathrm{Frat}(G) \cong \mathbb{Z}(p)$ and $Z(G) \cong \mathbb{Z}(p^2)$. Finally, we say that $G$ is  \textit{generalized extraspecial}, if $\mathrm{comm}(G,G)=\mathrm{Frat}(G) \cong\mathbb{Z}(p)$ is  contained in $Z(G)$.
\end{definitionn}

These groups have been recently studied in \cite{BBR, BDDR, OR} and attracted interest in the literature for various reasons. It is useful to note that any finite 2-group is extraspecial if, and only if, it is either of type $E^+_{{2}^{2n+1}}$, or of type $E^-_{{2}^{2n+1}}$.  More generally, any finite $p$-group is extraspecial if and only if it can be split appropriately as central product of some of its subgroups. Moreover, there are examples which show that there are generalized extraspecial $p$-groups which are not extraspecial and that there are almost extraspecial $p$-groups which are not extraspecial, however both these two generalizations can be appropriately described via central products as well (see \cite{berk1,  OR, S}).

We will come back to these groups later, after recalling the version of the Solomon--Tits Theorem which will be used for subgroup lattices of elementary abelian groups later on. We start by recalling the notion of spherical buildings, see e.g.~\cite[Example 18.1.11]{davis}.

\begin{definitionn}
\label{titsposet} \rm Let $V$ be an $n$-dimensional vector space over a field $\mathbb{F}$, where $n\geq 2$. The \emph{Tits poset} of $V$ is the poset  $\mathcal B(V) =\{\,W\leq V\mid 0<W<V\,\},$  ordered by inclusion. The order complex  $\Delta(\mathcal B(V))$ is called the \emph{spherical building of type $A_{n-1}$ associated with $V$}. Its simplices are the proper nontrivial flags  $0<W_0<W_1<\cdots <W_r<V.$  Its maximal simplices, also called \emph{chambers}, correspond to complete flags  $0<V_1<V_2<\cdots <V_{n-1}<V,$ where $\dim_\mathbb{F} V_j=j.$  Consequently, $\Delta(\mathcal B(V))$ has dimension $n-2$.  \end{definitionn}

From Definition \ref{titsposet}, if  $\mathcal E=(v_1,\ldots,v_n)$ is a basis of $V$, the subcomplex consisting of the flags whose members are spanned by subsets of $\mathcal E$ is called the \emph{apartment associated with $\mathcal E$}. Each such apartment is isomorphic to the barycentric subdivision of the boundary of an $(n-1)$-simplex and is therefore homeomorphic to $S^{n-2}$. The classical Solomon–Tits theorem says that that the order complex of the Tits poset is homotopy equivalent to a wedge of spheres -- see~\cite{solomon, davis}:

\begin{thm}
\label{solomontits} Let $V$ be an $n$-dimensional vector space over a field $\mathbb{F}$, with $n\geq 2$. Then the spherical building of type $A_{n-1}$ associated with $V$ is homotopy equivalent to a wedge of $(n-2)$-dimensional spheres. In other words, there exists an indexing set $I$ such that \[ |\Delta(\mathcal B(V))| \simeq \bigvee_{\alpha\in I}S^{n-2}. \] In particular,  $\widetilde H_j(\mathcal B(V);\mathbb Z)=0$ for every $j\neq n-2,$ and $\widetilde H_{n-2}(\mathcal B(V);\mathbb Z)$ is a free abelian group. \end{thm}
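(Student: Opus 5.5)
The statement to prove is the Solomon–Tits theorem (Theorem~\ref{solomontits}). I plan to prove it by induction on $n=\dim V$, using a shelling-free homotopy argument due to Quillen: I will show that $\mathcal B(V)$ is homotopy equivalent to a wedge of suspensions of smaller buildings. For the base case $n=2$, $\mathcal B(V)$ is a discrete set of lines, hence a wedge of $|\mathbb P^1(\mathbb F)|-1$ copies of $S^0$; this is the correct statement, reading a wedge of $0$-spheres as a discrete set with a basepoint.

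For the inductive step, fix a hyperplane $H<V$ and split $\mathcal B(V)$ into two parts. Let $\mathcal Q$ be the subposet of subspaces $W$ with $W+H\neq V$, i.e.\ $W\le H$. Every $W$ in $\mathcal Q$ satisfies $W\le H$, and the map $W\mapsto H$ is a cone point once we add $H$: for any $W\le H$ we have $W\le H$ with $H\in\mathcal Q$. Thus $\mathcal Q$ is contractible, having maximum element $H$. The complement $\mathcal B(V)\setminus\mathcal Q$ consists of the subspaces $W\not\le H$. Among these, the \emph{complements} of $H$ (lines $L$ with $L\oplus H=V$) are minimal. I will use the standard Quillen fiber / deletion argument: remove these complementary elements one at a time, or more cleanly, consider the subposet $\mathcal B'=\{W : W\cap H\neq 0\}\cup\{W\le H\}$. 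On $\mathcal B'$ the monotone map $W\mapsto W\cap H$ (for $W\not\le H$) combined with $W\le W$ gives $W\ge W\cap H\le H$, so $\mathcal B'$ contracts to $H$ by two homotopies of poset maps (Quillen's $f\le g$ lemma).

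The elements of $\mathcal B(V)$ missing from $\mathcal B'$ are exactly the lines $L$ complementary to $H$; these form an antichain. Adding each $L$ back glues in a cone over its link $\mathcal B(V)_{>L}\cong\mathcal B(V/L)$, which by induction is a wedge of $S^{n-3}$'s. Since $\mathcal B'$ is contractible and the $L$'s are pairwise incomparable, attaching all these cones yields
\[
|\Delta(\mathcal B(V))|\simeq\bigvee_{L}\Sigma|\Delta(\mathcal B(V/L))|\simeq\bigvee S^{n-2}.
\]
The homology statement then follows at once: $\widetilde H_j$ vanishes for $j\ne n-2$, and $\widetilde H_{n-2}$ is free, being the homology of a wedge of spheres. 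For finite $\mathbb F=\mathbb F_q$, counting gives $q^{\binom n2}$ spheres, as $q^{n-1}\cdot q^{\binom{n-1}2}$. This matches the Remark for elementary abelian $p$-groups.

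I expect the main obstacle to be justifying the gluing step precisely. Specifically, one must check that the link of a complementary line $L$ inside $\mathcal B'\cup\{L\}$ is all of $\mathcal B(V)_{>L}$: since $L$ is minimal, nothing lies below it in $\mathcal B(V)$, and every $W>L$ has $\dim W\ge2$, hence $W\cap H\ne0$, so $W\in\mathcal B'$. One must also confirm that $\mathcal B'$ is closed enough for the homotopy $W\mapsto W\cap H$ to be order preserving and to land in $\mathcal B'$. That second check uses $W\cap H\neq0$, which is exactly the defining condition of $\mathcal B'$.
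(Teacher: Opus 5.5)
Your proof is correct, and it takes a different route from the sources the paper relies on. The paper gives no proof of this theorem: it quotes it from \cite{solomon} and \cite{davis}. Its surrounding discussion of chambers and apartments points to the building-theoretic argument: fix a chamber $C$, show that deleting the chambers opposite $C$ leaves a contractible complex, and get one sphere (an apartment through $C$) for each opposite chamber.

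Your Quillen-style induction on a hyperplane $H$ goes through as follows.
\begin{enumerate}
\item Your set $\mathcal B'$ is simply $\{W\in\mathcal B(V): W\cap H\neq 0\}$. The second set in your union is redundant, and the poset $\mathcal Q$ plays no role.
\item The poset maps $\mathrm{id}\geq(W\mapsto W\cap H)\leq(\text{constant }H)$ from $\mathcal B'$ to itself contract $\mathcal B'$.
\item The deleted elements are exactly the lines $L$ with $L\cap H=0$. They are minimal and pairwise incomparable, so no chain contains two of them.
\item Hence $|\Delta(\mathcal B(V))|$ is $|\Delta(\mathcal B')|$ with the cones $L*|\Delta(\mathcal B(V)_{>L})|$ attached along subcomplexes of $|\Delta(\mathcal B')|$. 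Collapsing the contractible subcomplex $|\Delta(\mathcal B')|$ yields $\bigvee_L\Sigma|\Delta(\mathcal B(V/L))|$.
\end{enumerate}

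Compared with the building-theoretic proof, yours uses only Quillen's order-homotopy lemma and the cone-attaching lemma. It works verbatim over infinite fields, and even over division rings. It also produces the count recursively as $q^{n-1}\cdot q^{\binom{n-1}{2}}=q^{\binom n2}$. So it would prove Corollary~\ref{titsfact} directly, without the M\"obius-function evaluation $\mu(0,V)=(-1)^nq^{\binom n2}$ that the paper invokes there.

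The building-theoretic proof, in turn, applies uniformly to all spherical buildings, which is the generalization the paper states right after the theorem. It also exhibits a basis of $\widetilde H_{n-2}$ consisting of fundamental classes of apartments through $C$, the standard description of the Steinberg module in the subsequent remark. Your recursion also yields a basis, but only as iterated suspensions depending on auxiliary hyperplane choices at every stage.
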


 The Solomon--Tits Theorem can be formulated generally for arbitrary spherical buildings: a spherical building of rank $r$ is homotopy equivalent to a wedge of $(r-1)$-dimensional spheres. The preceding statement is the specialization to the building of type $A_{n-1}$, whose rank is $n-1$.

\begin{cor} \label{titsfact} Let $\mathbb F_q^n$ be the vector space of dimension $n$ over the finite field $\mathbb{F}_q$ of prime power order $q$  and $n\geq 2$. Then \[ |\Delta(\mathcal B(\mathbb F_q^n))| \simeq \bigvee_{q^{\binom n2}}S^{n-2}. \]
\end{cor}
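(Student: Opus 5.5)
The plan is to derive Corollary~\ref{titsfact} from Theorem~\ref{solomontits} by computing the number of spheres in the wedge. Theorem~\ref{solomontits} gives $|\Delta(\mathcal B(\mathbb F_q^n))|\simeq\bigvee_{\alpha\in I}S^{n-2}$, so the only task is to show $|I|=q^{\binom n2}$. The tool is the reduced Euler characteristic. A wedge of $|I|$ spheres of dimension $n-2$ has reduced Euler characteristic $(-1)^{n-2}|I|$. On the other hand, the reduced Euler characteristic of an order complex equals the M\"obius function of the bounded poset, by Philip Hall's theorem (the identity $\mu(H,G)=\widetilde\chi(\Delta((H,G)))$ recalled above, applied to the lattice of subspaces). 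Hence $|I|=(-1)^n\mu_{L}(0,V)$, where $L$ is the lattice of subspaces of $V=\mathbb F_q^n$.

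The key step is the classical formula $\mu_L(0,V)=(-1)^nq^{\binom n2}$. I would prove it by induction on $n$ using the defining recursion $\sum_{0\le W\le V}\mu(0,W)=0$. Every interval $[0,W]$ is the subspace lattice of $W$, so by induction $\mu(0,W)=(-1)^kq^{\binom k2}$ for $\dim W=k<n$. There are $\qbinom{n}{k}$ such $W$ (the Gaussian binomial, here with general $q$). The claim then reduces to the $q$-binomial theorem specialized at $x=-1$:
\[
\sum_{k=0}^{n}(-1)^kq^{\binom k2}\genfrac{[}{]}{0pt}{}{n}{k}_{q}=\prod_{i=0}^{n-1}(1-q^i)=0\qquad(n\ge1).
\]
This gives $\mu(0,V)=(-1)^nq^{\binom n2}$.

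An alternative that avoids M\"obius inversion is to count chambers opposite a fixed chamber. The standard Solomon--Tits argument shows the top homology is freely generated by the apartments containing a fixed chamber $C$ together with the chambers opposite $C$. The number of complete flags opposite the standard flag is the size of the big Bruhat cell, namely $q^{\ell(w_0)}=q^{\binom n2}$, since the longest element of $S_n$ has $\binom n2$ inversions. I would mention this as a cross-check.

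The main obstacle is only the bookkeeping of signs and the justification of the $q$-binomial identity. I would state the latter as the standard Cauchy/Gauss identity $\prod_{i=0}^{n-1}(1+q^ix)=\sum_k q^{\binom k2}\genfrac{[}{]}{0pt}{}{n}{k}_{q}x^k$ and not reprove it. Freeness of homology from Theorem~\ref{solomontits} ensures that the Euler characteristic count determines the number of wedge summands.
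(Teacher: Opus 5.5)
Your proposal is correct and follows the paper's proof. Theorem~\ref{solomontits} gives a wedge of $m$ spheres $S^{n-2}$, and comparing its reduced Euler characteristic $(-1)^n m$ with $\widetilde\chi(\mathcal B(V))=\mu(0,V)=(-1)^n q^{\binom n2}$ forces $m=q^{\binom n2}$; the only differences are that you justify the M\"obius value (by induction and the $q$-binomial theorem at $x=-1$), which the paper simply asserts, and you add the opposite-chamber count $q^{\binom n2}$ as a valid independent cross-check.
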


\begin{proof} By Theorem~\ref{solomontits}, $\mathcal B(V)$ is homotopy equivalent to a wedge of spheres of dimension $n-2$. It remains to determine their number. The poset $\mathcal B(V)$ is the open interval $(0,V)$ in the lattice of subspaces of $V$. The M\"obius function of this lattice satisfies \[ \mu(0,V)=(-1)^n q^{\binom n2}\ . \] On the other hand, \[ \widetilde\chi(\mathcal B(V))=\mu(0,V)\ . \] If $\mathcal B(V)$ is homotopy equivalent to a wedge of $m$ spheres of dimension $n-2$, then \[ \widetilde\chi(\mathcal B(V)) =m(-1)^{n-2}=m(-1)^n. \] It follows that \[ m=q^{\binom n2}, \] which proves the result. \end{proof} 

We now apply these facts to the case of elementary abelian $p$-groups. 

\begin{cor} Given  an elementary abelian $p$-group $\mathbb{Z}(p)^n$ of rank $n\geq 2$ with poset of proper nontrivial subgroups  $\cS(\mathbb{Z}(p)^n)$, we have an homotopy equivalence \[ |\Delta(\cS(\mathbb{Z}(p)^n))| \simeq \bigvee_{p^{\binom n2}}S^{n-2}\ . \] 
\end{cor}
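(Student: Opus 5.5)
The plan is to identify the subgroup lattice of $\mathbb{Z}(p)^n$ with the subspace lattice of an $n$-dimensional vector space over $\mathbb{F}_p$. Once that identification is in place, the statement follows from Corollary~\ref{titsfact} with $q=p$.

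First, I will make the identification explicit. Write $V=\mathbb{Z}(p)^n$ additively. Every element satisfies $pv=0$, so $V$ carries a natural $\mathbb{F}_p$-vector space structure, with scalar multiplication $\bar k\cdot v = kv$; this is well defined because $pv=0$. A subset $W\subseteq V$ is a subgroup if and only if it is an $\mathbb{F}_p$-subspace: a subgroup is closed under the integer multiples $kv$, which are exactly the scalar multiples, and the converse is clear. Moreover, $|W|=p^{\dim W}$, so the trivial subgroup corresponds to $0$ and $V$ corresponds to $V$. The identity map therefore gives an order-preserving bijection $\cL(\mathbb{Z}(p)^n)\cong$ (lattice of subspaces of $\mathbb{F}_p^n$). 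It restricts to an isomorphism of posets $\cS(\mathbb{Z}(p)^n)=(1,V)\cong \mathcal B(\mathbb{F}_p^n)$, the Tits poset of Definition~\ref{titsposet}.

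Second, isomorphic posets have isomorphic order complexes, since chains correspond to chains. Hence $|\Delta(\cS(\mathbb{Z}(p)^n))|$ is homeomorphic to $|\Delta(\mathcal B(\mathbb{F}_p^n))|$. Applying Corollary~\ref{titsfact} with $q=p$ and $n\geq 2$ then gives the wedge of $p^{\binom n2}$ spheres of dimension $n-2$.

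No step here is a genuine obstacle. The only point requiring care is checking that subgroups coincide with subspaces, which uses that the prime field $\mathbb{F}_p$ consists of integer multiples. This would fail for $\mathbb{F}_q$ with $q$ a proper prime power, but that case is not needed here.

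As a sanity check, take $n=2$: the space is a discrete set of $p+1$ points (the lines), which is a wedge of $p=p^{\binom22}$ copies of $S^0$, in agreement with the statement.
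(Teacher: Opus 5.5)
Your proposal is correct and follows essentially the same route as the paper: you identify subgroups of $\mathbb{Z}(p)^n$ with $\mathbb{F}_p$-subspaces, which gives the poset isomorphism $\cS(\mathbb{Z}(p)^n)\cong\mathcal B(\mathbb F_p^n)$, and then you invoke Corollary~\ref{titsfact} with $q=p$. Your version spells out the subgroup--subspace identification and adds a correct sanity check for $n=2$, both of which the paper leaves implicit.
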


\begin{proof} Regarding $\mathbb{Z}(p)^n$ as the additive group of the vector space $\mathbb F_p^n$, the subgroups of $\mathbb{Z}(p)^n$ are precisely its $\mathbb F_p$-linear subspaces. Hence there is an isomorphism of posets $\cS(\mathbb{Z}(p)^n)\cong\mathcal B(\mathbb F_p^n).$ The conclusion follows from Corollary \ref{titsfact}. \end{proof}

\begin{remark} \rm The group  $\widetilde H_{n-2} \bigl(\mathcal B(\mathbb F_q^n);\mathbb Z\bigr)$ is often called the \emph{Steinberg module}, see \cite{davis, solomon}.
\end{remark}

If $G$ is a finite $2$-group, then we know by Theorem~\ref{grandeiwasawa} that $G$ is modular if and only if $G$ is dihedral-free.

\begin{lemma}\label{crucialem}
Let  $G$ be a finite modular $2$-group. Then $\mathcal{S}(G)$ is contractible
if and only if $G$ is not elementary abelian. In fact, the same statement holds for every finite $2$-group.
\end{lemma}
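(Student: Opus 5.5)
Since the statement is asserted for every finite $2$-group, we prove that version directly; the modular case follows. Let $G$ be a nontrivial finite $2$-group. The key fact is Burnside's basis theorem: $\mathrm{Frat}(G)=G^2G'$, and $\mathrm{Frat}(G)=1$ if and only if $G$ is elementary abelian. The plan is to split into two cases according to whether $\mathrm{Frat}(G)$ is trivial.

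\emph{Case 1: $G$ is not elementary abelian.} Then $\mathrm{Frat}(G)\neq 1$. Apply Lemma~\ref{kratzerlemma} with $H=1$. Here $\mathrm{Frat}(1,G)$ is the intersection of all maximal subgroups containing $1$, which is just $\mathrm{Frat}(G)$. So $\mathrm{Frat}(1,G)\neq 1$, and the lemma gives that $(1,G)=\cS(G)$ is contractible.

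A direct argument is also available. Set $\Phi=\mathrm{Frat}(G)\neq1$. For $H\in\cS(G)$ we have $H\Phi\neq G$, since $\Phi$ consists of non-generators. Also $H\le H\Phi\ge\Phi$. Quillen's fiber lemma, or the standard chain of poset homotopies $H\mapsto H\Phi\mapsto\Phi$, then contracts $\cS(G)$ to the point $\Phi$. One small check is that $\Phi\neq G$, which holds because $G$ is a nontrivial $p$-group.

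\emph{Case 2: $G$ is elementary abelian, $G\cong\Z(2)^n$.} If $n\ge2$, the preceding corollary (Solomon--Tits) gives $|\Delta(\cS(G))|\simeq\bigvee_{2^{\binom n2}}S^{n-2}$. This is a nonempty wedge of spheres with $2^{\binom n2}\ge 2$ summands. Its reduced homology in degree $n-2$ is $\Z^{2^{\binom n2}}\neq0$, so it is not contractible. The degenerate cases need a convention. For $n=2$ the poset is three points, which is discrete and noncontractible. For $n=1$ the poset $\cS(\Z(2))$ is empty, and the empty space is not contractible. So the equivalence holds for all $n\ge1$.

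The only step needing care is the convention for small cases: $G=\Z(2)$, and trivial $G$, which should be excluded by hypothesis. The rest follows immediately from Lemma~\ref{kratzerlemma} and Corollary~\ref{titsfact}. Modularity plays no role in the argument, which explains the final sentence of the statement.
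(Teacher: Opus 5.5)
Your proof is correct and follows the paper's route. A nontrivial Frattini subgroup gives contractibility via Kratzer: you use Lemma~\ref{kratzerlemma} with $H=1$ (or the closure argument $H\mapsto H\,\mathrm{Frat}(G)$), while the paper uses the equivalent ``normal subgroup without complement'' criterion of Kratzer's Proposition~1.8. Solomon--Tits then handles $\Z(2)^n$, with the empty poset for $n=1$ counted as noncontractible. Your appeal to the Burnside basis theorem, $\mathrm{Frat}(G)=G^2G'$, is also a cleaner justification of $\mathrm{Frat}(G)\neq 1$ than the paper's remark that a non-elementary-abelian $2$-group is not generated by involutions, a claim that fails for $D_8$.
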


\begin{proof}
Let $G$ be a finite $2$-group. Assume first that $G$ is not elementary abelian. Then $G$ is not generated by elements of order $2$ alone, and therefore its Frattini subgroup $\mathrm{Frat}(G)$ is nontrivial. Since $\mathrm{Frat}(G)$ is the intersection of all maximal subgroups of $G$, it cannot admit a complement in $G$: indeed, if $G=\mathrm{Frat}(G)H$ for some subgroup $H\leq G$, then the defining property of the Frattini subgroup forces $H=G$, so no proper subgroup can complement $\mathrm{Frat}(G)$. As $\mathrm{Frat}(G)$ is a nontrivial normal subgroup of $G$ without complement, \cite[Proposition~1.8]{kratzer} yields that $\cS(G)$ is contractible.

Conversely, suppose that $G$ is elementary abelian, say $G\cong \mathbb{Z}(2)^n$.
Then the proper nontrivial subgroups of $G$ are precisely the nonzero proper subspaces of the vector space $\mathbb F_2^n$. Hence $\cS(G)$ is the building of proper nonzero subspaces of $\mathbb F_2^n$. By Theorem~\ref{solomontits}, its order complex is homotopy equivalent to a wedge of spheres of dimension $n-2$; equivalently, since $\cL(G)$ is a geometric lattice, one has $\widetilde H_{n-2}(\cS(G))\neq 0$
for $n\geq 2$. Therefore $\cS(G)$ is not contractible when $n\geq 2$. If $n=1$, then $\cS(G)$ is empty. 
Therefore $\cS(G)$ is contractible if and only if $G$ is not elementary abelian. 
\end{proof}

The modularity assumption is actually stronger than needed. Nevertheless, in the present context it is useful to note that every nonelementary abelian modular $2$-group has contractible subgroup poset. We first point out that the contractibility of the proper subgroup poset of a finite $2$-group does not, by itself, characterize modularity.

\begin{remark}\label{rem:ordinary-contractibility-insufficient}\rm  Lemma \ref{crucialem} shows that ordinary contractibility only detects whether the Frattini subgroup is nontrivial. It does not distinguish modular and nonmodular $2$-groups. For instance, we have the following cases: \[ \begin{array}{c|c|c} P& \cL(P)\text{ is modular}& \cS(P)\text{ is contractible}\\ \hline \mathbb{Z}(2)^n,\ n\geq 2& \text{yes}& \text{no}\\ Q_8& \text{yes}& \text{yes}\\ D_8& \text{no}& \text{yes}. \end{array} \] Neither contractibility nor noncontractibility of $\cS(P)$ alone characterizes modularity. \end{remark} 

 We now isolate the local configuration which detects a dihedral section. 
 
 \begin{lemma}\label{lem:two-involutions-contractibility} Let $M$ be a finite $2$-group  possessing two distinct subgroups $A$ and $B$  of order $2$ generating $ K=\langle A,B\rangle.$ Then precisely one of the following alternatives occurs: \begin{enumerate} 
 \item[{\rm (i).}] $K\cong \mathbb{Z}(2)^2$, in which case  $\cS(K)$ is not contractible,  $\widetilde H_0(\cS(K);\mathbb Z)\cong\mathbb Z^2,$   $\mu(1,K)=2$; 
 \item [\rm (ii).] $K \cong D_8$ in which case  $\cS(K)$ is contractible,  $\widetilde H_j(\cS(K);\mathbb Z)=0$ for all $j$,  $\mu(1,K)=0.$ \end{enumerate} Consequently,  $\cS(\langle A,B\rangle)$ is not contractible if, and only if,  $\mathrm{comm}(A,B)=1$. \end{lemma}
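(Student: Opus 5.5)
The proof will use the classical fact that a group generated by two distinct involutions is dihedral. Write $A=\langle a\rangle$ and $B=\langle b\rangle$ with $a\neq b$ involutions, and put $c=ab$. Then $K=\langle a,b\rangle=\langle c,a\rangle$ and $aca^{-1}=ba=c^{-1}$. So $K$ is a quotient of the infinite dihedral group, hence dihedral of order $2n$, where $n=|c|$. Since $a\neq b$, we have $c\neq 1$. Since $K\leq M$ is a $2$-group, $n=2^m$ with $m\geq 1$. This gives exactly two cases.

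\emph{Case $n=2$.} Then $c^2=1$, so $a$ and $b$ commute and $K\cong\mathbb Z(2)^2$. Here $\cS(K)$ consists of the three subgroups of order $2$, pairwise incomparable. Its order complex is therefore three points, which gives $\widetilde H_0(\cS(K);\mathbb Z)\cong\mathbb Z^2$ and vanishing higher homology. The Möbius function is computed directly: $\mu(1,1)=1$ and $\mu(1,X)=-1$ for each of the three atoms $X$, so $\mu(1,K)=-(1-3)=2$. This agrees with $\widetilde\chi=3-1=2$ and with the Corollary on $\cS(\mathbb Z(p)^n)$ for $p=n=2$.

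\emph{Case $n\geq 4$.} Then $K\cong D_{2^{m+1}}$ with $m\geq 2$, which is $D_8$ when $m=2$. In general $K$ is dihedral of order at least $8$ and contains $D_8$ as the subgroup $\langle c^{2^{m-2}},a\rangle$. I read alternative (ii) in this sense: the argument is identical for all $m\geq 2$. Now $K$ is not elementary abelian, because $c$ has order $\geq 4$. Hence $\mathrm{Frat}(K)\neq 1$, and Lemma~\ref{crucialem} (equivalently, Lemma~\ref{kratzerlemma} with $H=1$) shows that $\cS(K)$ is contractible. Consequently all reduced homology vanishes, and $\mu(1,K)=\widetilde\chi(\Delta(\cS(K)))=0$.

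The two cases are mutually exclusive, since $\cS$ is non-contractible in the first and contractible in the second. This proves the dichotomy. For the final equivalence: $\cS(\langle A,B\rangle)$ is non-contractible exactly in Case $n=2$, that is, exactly when $ab=ba$. This in turn is exactly when $\mathrm{comm}(A,B)=\langle[a,b]\rangle=1$.

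The only step needing care is the identification of $\langle a,b\rangle$ as dihedral of order $2|ab|$, together with the remark that in a $2$-group $|ab|$ is a power of $2$. The rest follows from Lemma~\ref{crucialem} and a direct Möbius computation on a three-element antichain.
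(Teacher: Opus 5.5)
Your argument is correct and is essentially the paper's proof: you identify $K=\langle a,ab\rangle$ as dihedral of order $2|ab|$, treat the case $ab=ba$ through the three-point antichain (giving $\widetilde H_0\cong\mathbb Z^2$ and $\mu(1,K)=2$), and otherwise obtain contractibility of $\cS(K)$ from $\mathrm{Frat}(K)\neq 1$ via Lemma~\ref{crucialem}. Your reading of (ii) as ``$K$ dihedral of order $2^{m+1}\geq 8$'' is necessary, because the statement as written is false and the paper's step concluding $K\cong D_8$ is wrong: in $D_{16}=\langle r,s\mid r^8=s^2=1,\ srs=r^{-1}\rangle$ the involutions $s$ and $sr$ generate all of $D_{16}$, since their product $r$ has order $8$. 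Your version is therefore the one that actually establishes the dichotomy and the final equivalence with $\mathrm{comm}(A,B)=1$.
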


 \begin{proof} Write  $A=\langle a\rangle \cong \mathbb{Z}(2)$ and  $B=\langle b\rangle \cong \mathbb{Z}(2)$ for suitable $a$ and $b$ distinct involutions and write $x=ab$ for their product. Since $axa=a(ab)a=ba=(ab)^{-1}=x^{-1},$ the group $\langle a,b\rangle=\langle x,a\rangle$ is dihedral, that is, either isomorphic to $\mathbb{Z}(2)^2$, or to $D_8$. If $a$ and $b$ commute, then $x^2=1$, and  $K \cong \mathbb{Z}(2)^2.$  Its proper nontrivial subgroup poset consists of its three subgroups of order $2$, with no comparability relations. Hence its order complex is a discrete space with three points. Therefore  $\widetilde H_0(\cS(K);\mathbb Z)\cong\mathbb Z^2$ and $\mu(1,K)=\widetilde\chi(\cS(K))=2.$ If $a$ and $b$ do not commute, then $x$ has order at least $4$. Since $M$ is a finite $2$-group, the order of $x$ is a power of $2$, and $K \cong D_8$. In particular, $K$ is not elementary abelian. Lemma \ref{crucialem}  implies $\cS(K)$  contractible. Its reduced homology and reduced Euler characteristic vanish, so  $\mu(1,K)=0.$ \end{proof}

 We can now prove Theorem \ref{thm:sectional-contractibility-modular}.

 \begin{proof}[Proof of Theorem \ref{thm:sectional-contractibility-modular}] From Iwasawa Theorem \cite{iwasawa} on  modular finite $2$-groups as by Theorem~\ref{grandeiwasawa},  $\cL(G)$  is modular if and only if $G$ is dihedral-free. Let $M$ be a section of $G$, and let $A,B$ be distinct subgroups of order $2$ in $M$. By Lemma~\ref{lem:two-involutions-contractibility},  $\cS(\langle A,B\rangle)$ is not contractible if and only if $\mathrm{comm}(A,B)=1.$ Thus (ii) is equivalent to the assertion that every pair of involutions commutes in every section of $G$. Suppose that some section $M$ contains two noncommuting involutions. They generate a dihedral $2$-group of order at least $8$. Every dihedral $2$-group of order at least $8$ contains a subgroup isomorphic to $D_8$. It follows that $G$ has a section isomorphic to $D_8$. Conversely, if $G$ has a section isomorphic to $D_8$, then that section contains two noncommuting involutions. Therefore (ii) fails. We have consequently proved that (i) is equivalent to (ii).
 
 Now assume (iii).  Then  $|A/N|=|B/N|=2$ and   the interval  $[N, \langle A, B \rangle]$ is isomorphic to the subgroup lattice of the quotient group $\langle A, B\rangle /N$, see \cite{S}. In particular,  $(N,\langle A, B\rangle) \simeq \cS(\langle A, B\rangle/N)$.   Then $\langle A, B\rangle /N$ is generated by the two distinct  subgroups $A/N$ and $B/N$ of order two. Lemma  \ref{lem:two-involutions-contractibility} now shows that conditions  (ii), (iii) and (iv) are  equivalent to the assertion that $ \langle A, B \rangle/N\cong \mathbb{Z}(2)^2$. In fact, this is equivalent to say that $\mathrm{comm}(A/N,B/N)=1$. \end{proof}

 We observe here that it is not sufficient to impose the condition only on pairs of involutions of $G$ itself. For example, $Q_{16}$ has a unique subgroup of order $2$. Hence any condition involving pairs of distinct subgroups of order two in $Q_{16}$ is vacuous. On the other hand,  $Q_{16}/Z(Q_{16})\cong D_8.$ Thus $Q_{16}$ has a dihedral section and is nonmodular. This explains why Theorem~\ref{thm:sectional-contractibility-modular} must involve all sections of $G$.

 \begin{remark}\rm Considering $\Z(4)$, we know that ${\mathcal S}(G)$ modular does not imply that any open interval is noncontractible, in fact $(1,\Z(4))=\{\Z(2)\}$.
Also the converse fails, that is,  we cannot claim that ${\mathcal S}(G)$ is modular, just knowing that any open interval is noncontractible. However Bj\"orner Complementation Theorem \cite{bjorner81, bjorner92} shows that with complementation we can have implications: in fact  in our case $\mathcal{S}(G)$ is a finite modular lattice and the order complex $|\Delta((H,K))|$ of the open interval $(H,K)$ is noncontractible if and only if the closed interval $[H,K]$ is complemented. This property applies for arbitrary modular finite lattices, see \cite{bjorner81, bjorner92, kratzer}. In particular, we have for a finite group $G$ that
every open interval  $(H,K)$ is noncontractible if and only if $G$ is elementary abelian. \end{remark}

The preceding results may be summarized as follows, at least for finite 2-groups: 
 \begin{enumerate}
     \item $\cS(G)$ is  contractible if and only if $\mathrm{Frat}(G)$ is nontrivial.
     \item $G$ is modular if and only if $\cS(\langle A,B\rangle)$  is noncontractible for every pair of distinct subgroups $A,B$ of order two  in every section of $G$. 
 \end{enumerate}
 In other words, we observe that while the ordinary contractibility is a Frattini invariant, the sectional contractibility detects modularity and heavily depends on the notions in Definition \ref{qfree}.

 \section{The sectional contractibility graph and  its properties} \label{s3}
 
 The Sectional Contractibility Criterion of Theorem~\ref{thm:sectional-contractibility-modular} can be encoded in an opportune family of graphs, which we call sectional contractibility graphs. In this section we show a modularity characterization in terms of magnitude homology and show some properties of 
 sectional contractibility graphs.
 
 \begin{definitionn}\label{nicenicegraph} Let $G$ be a finite $2$-group. The \emph{sectional contractibility graph} of $G$, denoted by $\Gamma_{\mathrm{ct}}(G),$ is the graph whose vertex set is  \[V(\Gamma_{\mathrm{ct}}(G)) = \{\,A\leq G\mid |A|=2\,\}\ . \] 
 Two distinct vertices $A$ and $B$ are joined by an edge if and only if $\cS(\langle A,B\rangle)$ is not contractible. 
 \end{definitionn} 

For a finite group $G$, one can also consider the  
\textit{commuting graph} $\Gamma_c(G)$ -- see, e.g.~\cite{alireza06, mohammed15, rowley21}. This is the  simple undirected graph  with vertex set
$V(\Gamma_c(G)) =G\setminus Z(G)$ and elements $x,y\in G$  joined if and only if $xy=yx$. We may consider all the involutions 
\[
\mathrm{Inv}(G)=\{g\in G \mid g^2=1 \ \mbox{and}  \ g\neq1\}
\]
and study the commuting graph $\Gamma_\mathrm{inv}(G)$ on $\mathrm{Inv}(G)$ (instead of $G\setminus Z(G)$), restricting to $\mathrm{Inv}(G)$ as set of vertices and connecting two involutions $u,v$ of $G$ if and only if they commute. The graph $\Gamma_\mathrm{inv}(G)$ is called \textit{commuting involution graph} of $G$ and also previously considered, see e.g.~\cite{rowley21}.  We have the following result establishing a connection between the sectional contractibility graph and the commuting involution graph;

 \begin{prop}\label{prop:contractibility-commuting-graph} Let $G$ be a finite $2$-group with $A,B$ subgroups of $G$ of order two. Then  $\{A,B\}\in E(\Gamma_{\mathrm{ct}}(G))$ if and only if  $\mathrm{comm}(A,B)=1.$ Thus $\Gamma_{\mathrm{ct}}(G)=\Gamma_{\mathrm{inv}}(G)$. \end{prop}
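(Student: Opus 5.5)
The proof reduces directly to Lemma~\ref{lem:two-involutions-contractibility}, applied with $M=G$. Let $A,B$ be distinct subgroups of $G$ of order two, and write $A=\langle a\rangle$ and $B=\langle b\rangle$ with $a,b$ involutions. By Definition~\ref{nicenicegraph}, $\{A,B\}$ is an edge of $\Gamma_{\mathrm{ct}}(G)$ exactly when $\cS(\langle A,B\rangle)$ is not contractible. Lemma~\ref{lem:two-involutions-contractibility} states that this happens if and only if $\mathrm{comm}(A,B)=1$.

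For completeness I would recall why the lemma gives this. The group $\langle a,b\rangle$ is dihedral, generated by $a$ and $x=ab$. If $a$ and $b$ commute, then $\langle a,b\rangle\cong\mathbb Z(2)^2$. In this case $\cS$ consists of three incomparable points, so it is not contractible. If $a$ and $b$ do not commute, then $\langle a,b\rangle$ is a dihedral $2$-group of order at least $8$. Its Frattini subgroup $\langle x^2\rangle$ is nontrivial, so $\cS$ is contractible by Lemma~\ref{crucialem}. This settles the first assertion.

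For the identification $\Gamma_{\mathrm{ct}}(G)=\Gamma_{\mathrm{inv}}(G)$, I would take the bijection $\mathrm{Inv}(G)\to V(\Gamma_{\mathrm{ct}}(G))$ given by $u\mapsto\langle u\rangle$. It is a bijection because each subgroup of order two contains exactly one involution. Next, $\mathrm{comm}(\langle u\rangle,\langle v\rangle)$ is generated by the conjugates of $[u,v]$ and its powers, so it is trivial if and only if $uv=vu$. Hence the bijection sends edges of $\Gamma_{\mathrm{inv}}(G)$ exactly to edges of $\Gamma_{\mathrm{ct}}(G)$, and $\Gamma_{\mathrm{ct}}(G)=\Gamma_{\mathrm{inv}}(G)$ up to this canonical isomorphism.

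The only delicate point is a convention. $\Gamma_{\mathrm{inv}}(G)$ should be read as the simple graph with no loops, so that it matches $\Gamma_{\mathrm{ct}}(G)$, whose edges join distinct vertices only. Also, no sections of $G$ enter here, since the graph is defined on $G$ itself. So there is no real obstacle beyond checking these conventions.
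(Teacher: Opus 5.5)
Your proof is correct and follows the paper's argument: you reduce to Lemma~\ref{lem:two-involutions-contractibility}, where commuting involutions give $\mathbb{Z}(2)^2$ (three incomparable points, not contractible) and noncommuting ones give a dihedral group with nontrivial Frattini subgroup, hence contractible $\cS$. Your phrasing ``dihedral $2$-group of order at least $8$'' is more accurate than the paper's claim that $\langle A,B\rangle\cong D_8$, and your explicit bijection $u\mapsto\langle u\rangle$ spells out the identification $\Gamma_{\mathrm{ct}}(G)=\Gamma_{\mathrm{inv}}(G)$, which the paper leaves implicit.
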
 
 \begin{proof} This  follows from Lemma~\ref{lem:two-involutions-contractibility}. In fact, Two $A$ and $B$ are two distinct involutions and  either $\langle A,B\rangle\cong \Z(2)^2$ when
 they commute, or $\langle A,B\rangle\cong D_8$ when they do not. In the first case, the proper subgroup poset consists of three isolated points, so it is noncontractible. In the second case, the generated group has nontrivial Frattini subgroup, and its proper subgroup poset is contractible. Thus,  in the present context, adjacency means exactly commutativity.  \end{proof}

 \begin{cor} \label{cor:graph-characterization-modular} Let $G$ be a finite $2$-group. Then  $G$ is nonmodular  if and only if  some section~$M$ of $G$ contains two vertices $A,B$ for which  $\cS(\langle A,B\rangle)$  is contractible. \end{cor}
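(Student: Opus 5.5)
The plan is to reduce Corollary~\ref{cor:graph-characterization-modular} to the equivalence (i)$\Leftrightarrow$(ii) of Theorem~\ref{thm:sectional-contractibility-modular}, rephrased through the sectional contractibility graph. Each section $M$ of $G$ is again a finite $2$-group, so $\Gamma_{\mathrm{ct}}(M)$ is defined. Its vertices are exactly the subgroups of order two of $M$. By Definition~\ref{nicenicegraph}, a pair of distinct vertices $A,B$ of $\Gamma_{\mathrm{ct}}(M)$ is a \emph{non-edge} if and only if $\cS(\langle A,B\rangle)$ is contractible.

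Negating condition (ii) of Theorem~\ref{thm:sectional-contractibility-modular} gives the following: there is a section $M$ and two distinct subgroups $A,B\le M$ of order two with $\cS(\langle A,B\rangle)$ contractible. This is precisely the statement that some section $M$ contains two vertices $A,B$ of $\Gamma_{\mathrm{ct}}(M)$ with $\cS(\langle A,B\rangle)$ contractible. Since (i)$\Leftrightarrow$(ii) in that theorem, failure of (ii) is equivalent to $\cL(G)$ not being modular, which proves the corollary.

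As a sanity check, and to make the statement concrete, I would also record the group-theoretic content using Lemma~\ref{lem:two-involutions-contractibility} and Proposition~\ref{prop:contractibility-commuting-graph}. Such a pair $A,B$ generates $D_8$, so $\Gamma_{\mathrm{ct}}(M)=\Gamma_{\mathrm{inv}}(M)$ is not complete. Hence $G$ has a $D_8$ section, matching Theorem~\ref{grandeiwasawa}.

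There is no real obstacle here. The only point to handle carefully is that "vertices" must be read in $\Gamma_{\mathrm{ct}}(M)$ for the section $M$, not in $\Gamma_{\mathrm{ct}}(G)$. The example of $Q_{16}$ discussed after the proof of Theorem~\ref{thm:sectional-contractibility-modular} shows that restricting to $G$ itself would make the statement false.
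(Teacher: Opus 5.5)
Your argument is correct and is essentially the paper's. Both reduce the corollary to the equivalence (i)$\Leftrightarrow$(ii) of Theorem~\ref{thm:sectional-contractibility-modular}: the paper phrases it as non-completeness of $\Gamma_{\mathrm{ct}}(M)$ via Proposition~\ref{prop:contractibility-commuting-graph}, whereas you negate (ii) directly, which is literally the right-hand side. One small correction, affecting only your sanity check: two noncommuting involutions generate a dihedral $2$-group of order at least $8$ (e.g.\ all of $D_{16}$), not necessarily $D_8$ itself, but that group still contains a $D_8$, so your conclusion about a $D_8$ section stands.
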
 
 \begin{proof} By Proposition~\ref{prop:contractibility-commuting-graph}, the graph $\Gamma_{\mathrm{ct}}(M)$ is complete if and only if every two involutions in~$M$ commute. The conclusion follows from Theorem~\ref{thm:sectional-contractibility-modular}. \end{proof}

\begin{figure}
    \centering
    \includegraphics[width=1\linewidth]{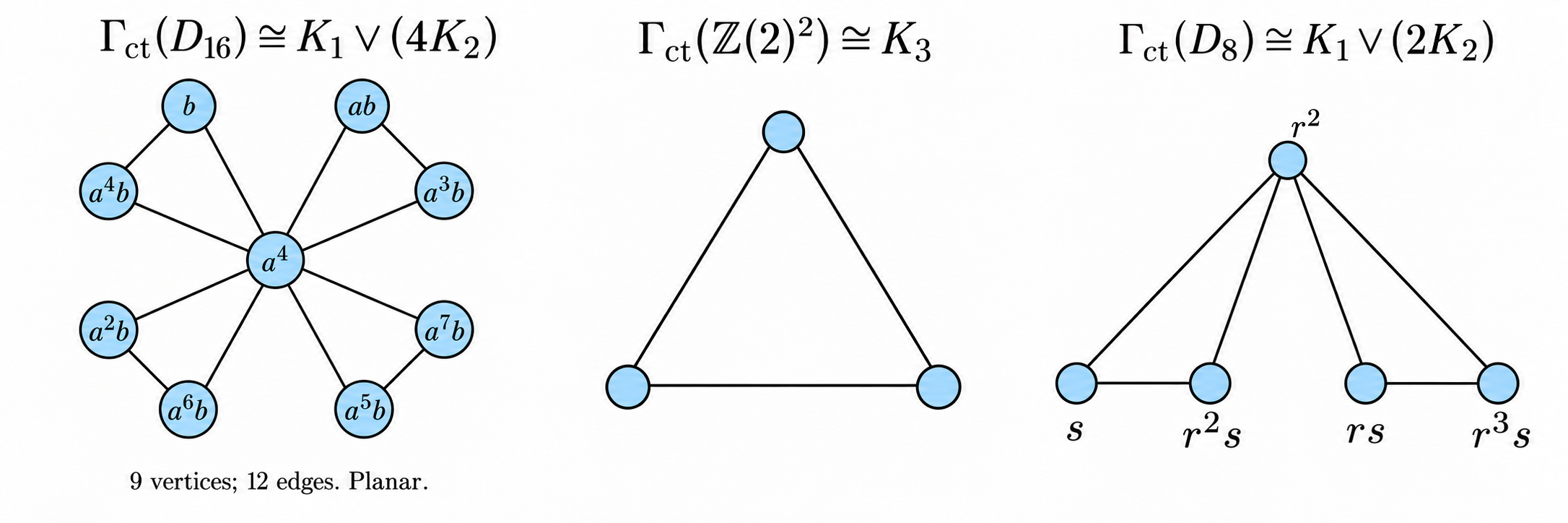}
    \caption{Some examples of sectional contractibility graphs which are planar.}
    \label{examplesofplanarity}
\end{figure}

  For a finite $2$-group $G$, define the following numbers \[ a(G) \coloneqq \bigl|\{\,A\leq G\mid |A|=2\,\}\bigr|\] and 
  \[v(G) \coloneqq \bigl|\{\,V\leq G\mid V\cong \mathbb{Z}(2)^2\,\}\bigr|\ . \]

  \begin{lemma}\label{lem:number-edges-klein-four} For every finite $2$-group $G$,  we have \[ \bigl|E(\Gamma_{\mathrm{ct}}(G))\bigr|=3v(G)\ . \] \end{lemma}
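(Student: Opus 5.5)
By Proposition~\ref{prop:contractibility-commuting-graph}, the graph $\Gamma_{\mathrm{ct}}(G)$ coincides with the commuting involution graph $\Gamma_{\mathrm{inv}}(G)$. Its vertices are the subgroups of order $2$, identified with the involutions generating them. Two distinct vertices $A,B$ are adjacent exactly when $\mathrm{comm}(A,B)=1$. The plan is to count edges by double counting, assigning each edge to the Klein four-group it generates.

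First, I would define a map
\[
\Phi\colon E(\Gamma_{\mathrm{ct}}(G))\to \{\,V\le G\mid V\cong \mathbb Z(2)^2\,\},\qquad \{A,B\}\mapsto \langle A,B\rangle .
\]
This map is well defined by Lemma~\ref{lem:two-involutions-contractibility}(i): if $A\neq B$ commute, then $\langle A,B\rangle\cong\mathbb Z(2)^2$.

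Second, I would compute the fibres of $\Phi$. Fix $V\cong\mathbb Z(2)^2$. It has exactly three subgroups of order $2$, and any two of them commute, since $V$ is abelian. Each of the three pairs therefore gives an edge, and any two distinct such subgroups generate $V$. Conversely, an edge $\{A,B\}$ lies in $\Phi^{-1}(V)$ precisely when $A,B\le V$, because $\langle A,B\rangle=V$ forces $A,B\le V$, and $A,B$ are distinct. So every fibre has exactly $\binom{3}{2}=3$ elements.

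Summing over the fibres gives $|E(\Gamma_{\mathrm{ct}}(G))|=3v(G)$. There is no real obstacle here. The only point requiring care is that each edge determines a unique Klein four-group, which is just the statement that $\Phi$ is a function, and that distinct pairs inside $V$ generate all of $V$, which holds because two distinct order-$2$ subgroups of $V$ already have product of order $4$.
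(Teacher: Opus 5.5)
Your proof is correct and follows the paper's argument. The paper also observes that each subgroup $V\cong\mathbb{Z}(2)^2$ contains exactly three subgroups of order $2$, and so contributes three edges, and that each edge $\{A,B\}$ determines the unique Klein four-group $\langle A,B\rangle$, so no edge is counted twice; your map $\Phi$ with fibres of size $3$ is a cleaner formalization of that same double count.
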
 
  
  \begin{proof} Every   elementary abelian 2-subgroup $ \mathbb{Z}(2)^2$ of rank two contains precisely three subgroups of order $2$, and hence determines three unordered pairs of distinct subgroups of order two. Conversely, two distinct  subgroups $A,B$ of order two are adjacent in $\Gamma_{\mathrm{ct}}(G)$ if and only if they commute. In this case $ \langle A,B\rangle\cong \mathbb{Z}(2)^2$,  and this subgroup is uniquely determined by the pair $\{A,B\}$. Therefore each elementary abelian 2-subgroup of rank two  of this type contributes exactly three edges, and no edge is counted twice. \end{proof}

  \begin{cor}\label{cor:numerical-characterization-modular} Let $G$ be a finite $2$-group. Then $\cL(G)$ is modular if and only if \[ 3v(M)=\binom{a(M)}{2} \] for every section $M$ of $G$. \end{cor}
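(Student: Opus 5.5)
The plan is to combine the graph-theoretic reformulation with the sectional criterion of Theorem~\ref{thm:sectional-contractibility-modular}, as repackaged in Corollary~\ref{cor:graph-characterization-modular}. For any finite $2$-group $M$, the graph $\Gamma_{\mathrm{ct}}(M)$ is a simple graph on $a(M)$ vertices. Hence
\[
\bigl|E(\Gamma_{\mathrm{ct}}(M))\bigr|\le \binom{a(M)}{2},
\]
with equality exactly when $\Gamma_{\mathrm{ct}}(M)$ is complete. By Lemma~\ref{lem:number-edges-klein-four} the left-hand side equals $3v(M)$. So the identity $3v(M)=\binom{a(M)}{2}$ holds if and only if $\Gamma_{\mathrm{ct}}(M)$ is complete.

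The proof of Corollary~\ref{cor:graph-characterization-modular} already records, via Proposition~\ref{prop:contractibility-commuting-graph}, that completeness means every two involutions of $M$ commute. Equivalently, for every two distinct subgroups $A,B\le M$ of order $2$, the poset $\cS(\langle A,B\rangle)$ is not contractible (Lemma~\ref{lem:two-involutions-contractibility}).

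The steps, in order, are as follows.
\begin{enumerate}
\item[(a)] For a fixed section $M$, establish the equivalence
\[
3v(M)=\binom{a(M)}{2}\iff \Gamma_{\mathrm{ct}}(M)\ \text{is complete}.
\]
This uses Lemma~\ref{lem:number-edges-klein-four} together with the trivial edge bound. The degenerate cases $a(M)\in\{0,1\}$ should be checked separately: there both sides vanish and the graph is vacuously complete.
\item[(b)] Quantify over all sections $M$ of $G$. The condition ``$\Gamma_{\mathrm{ct}}(M)$ is complete for every section $M$'' is literally condition (ii) of Theorem~\ref{thm:sectional-contractibility-modular}.
\item[(c)] Invoke the equivalence (i)$\Leftrightarrow$(ii) of Theorem~\ref{thm:sectional-contractibility-modular} to conclude that this holds if and only if $\cL(G)$ is modular.
\end{enumerate}

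I do not expect a genuine obstacle. The only point needing care is that Lemma~\ref{lem:number-edges-klein-four} is applied to $M$ itself, which is again a finite $2$-group since sections of $2$-groups are $2$-groups. One must also make sure that the Klein four subgroups counted in $v(M)$ are subgroups of $M$, not of $G$. That is exactly how $v(M)$ is defined, so the count is intrinsic to $M$.
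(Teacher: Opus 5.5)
Your proposal is correct and follows the paper's own proof. Lemma~\ref{lem:number-edges-klein-four} gives $|E(\Gamma_{\mathrm{ct}}(M))|=3v(M)$, the graph on $a(M)$ vertices is complete exactly when this equals $\binom{a(M)}{2}$, and completeness for every section is condition (ii) of Theorem~\ref{thm:sectional-contractibility-modular}, which the paper invokes through Corollary~\ref{cor:graph-characterization-modular}; your separate check of the cases $a(M)\in\{0,1\}$ is harmless but is already covered by the edge bound.
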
 
  \begin{proof} By Lemma~\ref{lem:number-edges-klein-four},  $|E(\Gamma_{\mathrm{ct}}(M))|=3v(M).$ The graph $\Gamma_{\mathrm{ct}}(M)$ has $a(M)$ vertices and is complete if and only if \[ |E(\Gamma_{\mathrm{ct}}(M))| =\binom{a(M)}{2}\ . \] The conclusion follows from Corollary~\ref{cor:graph-characterization-modular}. \end{proof}

 We now reformulate the preceding criterion in terms  of magnitude homology. Recall the decomposition of magnitude homology for posets given by Theorem~\ref{thm:decompositionMH}.

 \begin{definitionn}[Bottom Supported Magnitude Homology]\label{bottomsupported} Let $Q$ be a finite $2$-group. We define the bottom supported part of $\MH_{2,2}(G_{\cL(Q)})$ by \[ \MH^{\,1}_{2,2}(G_{\cL(Q)}) \coloneqq \bigoplus_{\substack{K\leq Q\\ |K|=4}} \widetilde H_0((1,K);\mathbb Z)\ . \]
 \end{definitionn} 
 
 Note that, by the interval decomposition of magnitude homology in  Theorem~\ref{thm:decompositionMH}, $\MH^{\,1}_{2,2}(G_{\cL(Q)})$ is a direct summand of $ \MH_{2,2}(G_{\cL(Q)})$.

 \begin{lemma}\label{prop:bottom-magnitude-rank} Let $Q$ be a finite $2$-group. Then \[ \MH^{\,1}_{2,2}(G_{\cL(Q)})\cong\mathbb Z^{\,2v(Q)}. \] 
 \end{lemma}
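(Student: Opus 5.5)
The plan is to compute the direct sum in Definition~\ref{bottomsupported} summand by summand, splitting the subgroups $K\leq Q$ with $|K|=4$ by isomorphism type. A group of order $4$ is isomorphic either to $\Z(4)$ or to $\Z(2)^2$. We then compute $\widetilde H_0((1,K);\mathbb Z)$ in each case and add up the ranks. Since $\widetilde H_0$ of a finite discrete set is free abelian, the result will automatically be torsion-free, and only a count is needed.

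First, let $K\cong\Z(4)$. The open interval $(1,K)$ consists of the single subgroup of order $2$. Its order complex is a point, so $\widetilde H_0((1,K);\mathbb Z)=0$. Equivalently, $\mathrm{Frat}(K)\neq 1$, so Lemma~\ref{crucialem} gives contractibility. These summands therefore contribute nothing.

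Second, let $K\cong\Z(2)^2$. By Lemma~\ref{lem:two-involutions-contractibility}(i), $(1,K)=\cS(K)$ consists of three pairwise incomparable subgroups of order $2$. Its order complex is thus three points, and $\widetilde H_0((1,K);\mathbb Z)\cong\mathbb Z^2$. Alternatively, one can invoke Corollary~\ref{titsfact} with $n=2$, $q=2$, which gives a wedge of $2^{1}=2$ copies of $S^0$.

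Summing over the subgroups of $Q$ of order $4$, exactly $v(Q)$ of them are Klein four-groups, and each contributes $\mathbb Z^2$. This yields $\MH^{\,1}_{2,2}(G_{\cL(Q)})\cong\mathbb Z^{2v(Q)}$. No step is a real obstacle. The only care needed is to note that the interval $(1,K)$ is taken inside $\cL(Q)$ and coincides with $\cS(K)$, since subgroups between $1$ and $K$ are exactly the subgroups of $K$.
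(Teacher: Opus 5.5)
Your proposal is correct and follows the paper's proof essentially step for step. You split the order-$4$ subgroups into $\mathbb{Z}(4)$, where the interval is a single point so $\widetilde H_0=0$, and $\mathbb{Z}(2)^2$, where it is three incomparable points so $\widetilde H_0\cong\mathbb{Z}^2$, and then count the $v(Q)$ Klein four-subgroups; your alternative justifications via Lemma~\ref{crucialem} and Corollary~\ref{titsfact} are valid but not needed.
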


 \begin{proof} Every subgroup of order $4$ is isomorphic either to $\mathbb{Z}(4)$ or to $\mathbb{Z}(2)^2$. If $K\cong \mathbb{Z}(4)$, then the open interval $(1,K)$ consists of the unique subgroup of $K$ of order $2$. Hence  $\widetilde H_0((1,K);\mathbb Z)=0.$  If $K\cong \mathbb{Z}(2)^2$, then $(1,K)$ consists of three incomparable subgroups of order $2$. Therefore  $\widetilde H_0((1,K);\mathbb Z)\cong\mathbb Z^2.$ Since there are $v(Q)$ subgroups of type $\mathbb{Z}(2)^2$ in $Q$, the result follows. \end{proof} 
 
We can now provide  the proof of Theorem~\ref{cor:magnitude-characterization-modular}.

 \begin{proof}[Proof of Theorem~\ref{cor:magnitude-characterization-modular}] By Lemma \ref{prop:bottom-magnitude-rank},  $\operatorname{rank}_{\mathbb Z}\MH^{\,1}_{2,2}(G_{\mathcal{L}(M)})=2v(M).$ On the other hand, \[ 3v(M)=\binom{a(M)}{2} \] in Corollary \ref{cor:numerical-characterization-modular} is equivalent to \[ 2v(M)=\frac{a(M)(a(M)-1)}{3}\ . \] The conclusion  follows from Corollary~\ref{cor:numerical-characterization-modular} again. \end{proof}

 The intervalwise formulation is essential here. The total group $\MH_{2,2}(G_{\cL(M)})$ may contain contributions from many intervals whose lower endpoint is not the trivial subgroup. The summand  $\MH^{\,1}_{2,2}(G_{\mathcal{L}(M)})$ isolates precisely the intervals $(1,K)$ with $|K|=4$, and therefore records exactly the subgroups of type $\mathbb{Z}(2)^2$ and the commuting pairs of involutions in $M$.

We know from \cite{alireza06, mohammed15} that the commuting graph associated to a finite \(2\)-group $P$ is \textit{planar} (i.e.~it can be embedded in the usual plane in such a way that no edges cross each other) if and only if, either
$P \cong D_8$, or $P \cong Q_8$, or $|P|=16$ and $|Z(P)|=4$. We conclude the section with the proof of the planarity result for sectional contractibility graphs:

\begin{proof}[Proof of Theorem \ref{planarity}] If \(G\) contains \(\Z(2)^3\), its seven involutions commute pairwise. They give a complete graph \(K_7\) in \(\Gamma_{\mathrm{ct}}(G)\), so the graph is nonplanar.
Conversely, suppose that \(G\neq1\) contains no \(\Z(2)^3\). Choose a central involution $z\in Z(G).$ It is adjacent to every other involution. The involutions other than \(z\) are partitioned into pairs $\{a,az\}.$ Each pair consists of commuting involutions and, together with \(z\), forms a triangle. Suppose that distinct involutions \(a,b\neq z\) commute. Then
$\langle z,a,b\rangle$ is elementary abelian. Since it cannot have order \(8\), it has order at most~\(4\), forcing $b=az$. There are therefore no edges between different pairs. The graph is exactly a collection of triangles sharing the vertex \(z\), and is planar. \end{proof}

\begin{figure}
    \centering
    \includegraphics[width=1\linewidth]{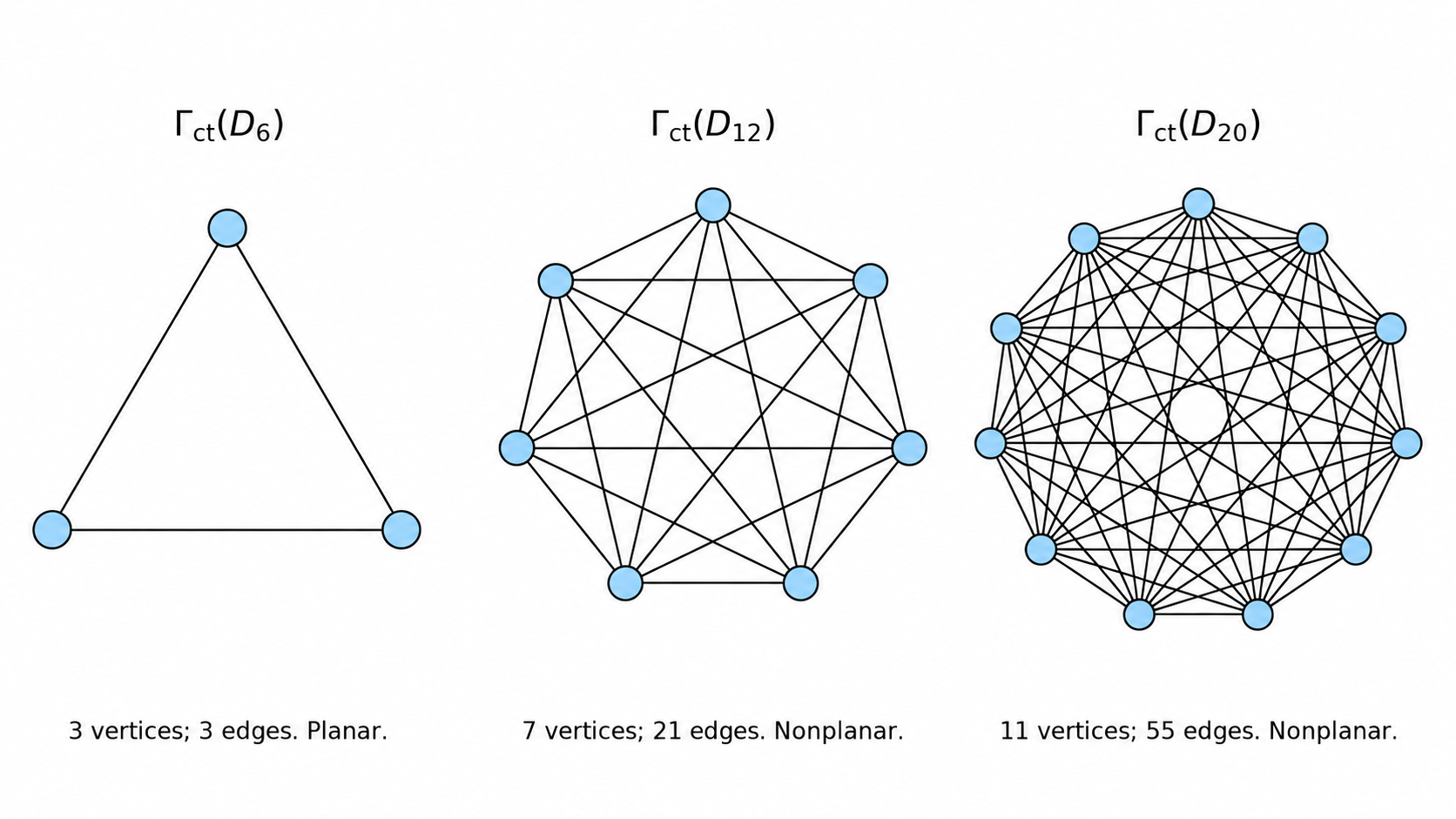}
    \caption{Further examples of sectional contractibility graphs, arising from dihedral groups which are not  2-groups.}
    \label{figmoreusefuldihedrals}
\end{figure}

Unfortunately, Theorem \ref{planarity}  shows that planarity does not characterize modularity: \(D_8\) is nonmodular but $\Gamma_{\mathrm{ct}}(D_8)$ is planar, whereas \(\Z(2)^3\) is modular but $\Gamma_{\mathrm{ct}}(\Z(2)^2)$ is nonplanar.

\begin{problem} \rm
The spectrum of the Laplacian matrix and of the adjacency matrix of the sectional contractibility graph have not been studied yet, to the best of our knowledge. In particular, recent works involve the spectral graph theory of certain graphs which are attached to Pauli groups, see \cite{BDDR}, and this allows to recognize bipartite graphs just looking at symmetric spectrum, or other combinatorial properties. On the other hand, there is no  much  literature on  the spectrum of the sectional contractibility graph of Pauli groups, and, more generally, of large families of finite groups.   
\end{problem}

\section{A focus on almost extraspecial 2-groups}\label{s4}

We have seen in Theorem~\ref{thm:bjorner} that if $G$ is supersolvable, then  $\cL(G)$ is Cohen-Macaulay. Hence, its magnitude homology is diagonal, that is,    it vanishes everywhere off the main diagonal. In this section we compute the magnitude homology for various families of groups. We start  with the cyclic groups. 

\begin{example} \rm \

\begin{enumerate}
    \item Let  $G= \mathbb{Z}(p)$ be a cyclic group of prime order. In such case, it is easy to compute the magnitude homology of $\cL(G)$:
\[
\MH_{0,0}(G_{\cL(\mathbb{Z}(p))})=\Z^2 \text{ and } \MH_{1,1}(G_{\cL(\mathbb{Z}(p))})=\Z
\]
and all others magnitude homology groups are 0. Note that this is a trivial example of supsersolvable groups with diagonal magnitude homology. Further, it is such that  $\cS(G)$ is Cohen-Macalauy. 
\item Consider the group $G =\mathbb{Z}(p^n)$ for $n \ge 1$. Then $\cL(\mathbb{Z}(p^n))$ is a  line graph, hence the magnitude homology is still concentrated in bidegree $(0,0)$ and $(1,1)$ with \[\MH_{0,0}(G_{\cL(\mathbb{Z}(p^n))})= \Z^{n+1}  \ \mbox{and} \ \MH_{1,1}(G_{\cL(\mathbb{Z}(p^n))})= \Z^{n} \ .\] 
\item If  $G=\mathbb{Z}(pq)=\mathbb{Z}(p) \times \mathbb{Z}(q)$, with $p,q$ primes such that $\gcd(p,q)=1$, then
\[
\MH_{0,0}(G_{\cL(\mathbb{Z}(pq))})=\Z^4\ , \quad \MH_{1,1}(G_{\cL(\mathbb{Z}(pq)})=\Z^4 \text{ ,  and } \MH_{2,2}(G_{\cL(\mathbb{Z}(pq))})=\Z \ .
\]
The Frattini subgroup is trivial and in fact $\cS(G)$ is not contractible. This gives homology in $\MH_{\ell,\ell}(G_{\cL(\mathbb{Z}(pq))})$, where $\ell$ is 2.
\end{enumerate}
   \end{example}

More generally, using Theorem~\ref{thm:bjorner}, the graphs associated to subgroups lattices of  supersolvable groups are diagonal. Furthermore, the  ranks of magnitude homology can be given in terms of the M\"obius function.

We now focus on extraspecial $p$-groups. They are interesting because, in view of Theorem~\ref{thm:bjorner}, they are:
\begin{itemize}
    \item supersolvable -- in fact, all finite $p$-groups are nilpotent, and all finitely generated nilpotent groups are supersolvable;
    \item  of length of  maximal chain bigger than $2$;  
    \item of nontrivial Frattini subgroup;
    \item  of  contractible poset of proper nontrivial subgroups. 
\end{itemize}
We start with the dihedral group:

\begin{example}\label{magnitudehomologyofd8} \rm
    Consider the extraspecial 2-group $G=D_8$. Then
    \[
    \MH_{k,\ell}(G_{\cL(D_8)})=\begin{cases}
        \Z^{10} & \text{if } (k,\ell)=(0,0)\ \;\\
        \Z^{15}& \text{if } (k,\ell)=(1,1)\ \;\\
        \Z^{6}& \text{if } (k,\ell)=(2,2)\ \;\\
    \end{cases}
    \]
    Here the first is the number of subgroups, the second the number of edges, the third the number of relative connected components of the following Hasse diagram -- see also Figure~\ref{fig:d8-lattice}. 
We have
$$\cL(D_8)=\{ \{1\}, \langle s \rangle, \langle s^2\rangle, \langle r \rangle, \langle sr \rangle, \langle s^2r \rangle, \langle s^3r \rangle, \{1,s^2,r,s^2r\}, \{1,s^2,sr,s^3r\}, D_8 \}\ ,$$ with the inclusion relation. Denoting $B=\langle b \rangle$, $Z(D_8)=\langle r^2 \rangle$, $M_1=\{1,s^2,r,s^2r\}$, $M_2=\{1,s^2,sr,s^3r\}$, we have $$\mathcal{N}(D_8)=\{D_8, \{1\}, B, Z(D_8), M_1, M_2\},$$ as sublattice of all normal subgroups of $D_8$. Notice that $H=\langle s^2r \rangle$ and $K=\langle r\rangle$ are contained in $M_1$, while $U=\langle sr\rangle $ and $V= \langle s^3r \rangle$ in $M_2$. 
\end{example}

\begin{figure}[ht]
\centering
\begin{tikzpicture}
  \node (max) at (0,4) {$D_8$};
  \node (a) at (-2,2) {$M_1$};
  \node (b) at (0,2) {$B$};
  \node (c) at (2,2) {$M_2$};
  \node (d) at (-2,0) {$K$};
  \node (g) at (-3,0) {$H$};
  \node (e) at (0,0) {$Z(D_8)$};
  \node (f) at (2,0) {$U$};
  \node (h) at (3,0) {$V$};
  \node (min) at (0,-2) {$\{1\}$};
  \node (Fig) at (4,-2) {};
  \draw (min) -- (g) -- (a) -- (d) -- (min) -- (e)--
  (a) -- (max) -- (b) -- (e) -- (c) -- (f) -- (min) --(h)--(c) -- (max);
 \end{tikzpicture}
 \caption{A draft of the Hasse diagram of the subgroups lattice $\cL(D_8)$.}
\label{fig:d8-lattice}
 \end{figure}

Something more general can be noted for arbitrary extraspecial $p$-groups for any prime $p$.

\begin{prop}
Let $G$ be an extraspecial $p$-group.    Then, the magnitude homology of $\cL(G)$ is nontrivial only in bidegrees $(0,0),(1,1), (2,2)$. 
\end{prop}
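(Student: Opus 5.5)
The plan is to combine diagonality with the interval decomposition of Theorem~\ref{thm:decompositionMH}, and then rule out every interval of length at least $3$.

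\emph{Reduction to diagonal bidegrees.} An extraspecial $p$-group is a $p$-group, hence supersolvable. By Theorem~\ref{thm:bjorner}, $\cL(G)$ is ranked and Cohen--Macaulay, with rank function $r(H)=\log_p|H|$. By the remark following Theorem~\ref{thm:decompositionMH}, the digraph $G_{\cL(G)}$ is therefore diagonal. So it suffices to show that $\MH_{\ell,\ell}(G_{\cL(G)})=0$ for every $\ell\geq 3$. The groups in bidegrees $(0,0)$, $(1,1)$ and $(2,2)$ are automatically free, of ranks equal to the number of subgroups, the number of covering pairs, and $\sum_{|K:H|=p^2}\operatorname{rank}\widetilde H_0((H,K))$ respectively, exactly as in Example~\ref{magnitudehomologyofd8}.

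\emph{Interval decomposition.} For $\ell\geq 3$, Theorem~\ref{thm:decompositionMH} gives
\[
\MH_{\ell,\ell}(G_{\cL(G)})\cong\bigoplus_{\substack{H<K\\ |K:H|=p^{\ell}}}\widetilde H_{\ell-2}((H,K)).
\]
So the task is to show that every open interval $(H,K)$ with $|K:H|\geq p^3$ is contractible. The tool is Lemma~\ref{kratzerlemma}, applied inside $K$: it suffices to show $\mathrm{Frat}(H,K)\neq H$. For $p$-groups every maximal subgroup of $K$ is normal and contains $\mathrm{Frat}(K)$. Hence $\mathrm{Frat}(H,K)\supseteq H\,\mathrm{Frat}(K)$, and the lemma applies as soon as $\mathrm{Frat}(K)\not\leq H$. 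It therefore remains to handle intervals with $\mathrm{Frat}(K)\leq H$. In that case $(H,K)\cong\cS(K/\ldots)$ sits inside the building of the elementary abelian group $K/\mathrm{Frat}(K)$, and the length $\ell$ is bounded by the rank $\log_p|K/\mathrm{Frat}(K)|-\log_p|H/\mathrm{Frat}(K)|$.

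\emph{The case analysis, and the main obstacle.} I will split according to $K$.

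If $K$ is abelian, then $K\leq A$ for a maximal abelian subgroup $A$ of $G$. I would bound the Frattini quotient of $A$ using $Z(G)=\mathrm{Frat}(G)=G'\cong\mathbb Z(p)$.

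If $K$ is nonabelian, then $K'=Z(G)\leq\mathrm{Frat}(K)$. The interval is then controlled by $K/Z(G)$, which is a subspace of the symplectic space $G/Z(G)$.

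The hard step is the case $K=G$ and $H\supseteq Z(G)$. There $(H,G)$ is isomorphic to a Tits poset over $G/H$, which by Corollary~\ref{titsfact} is a nontrivial wedge of spheres as soon as $\log_p|G:H|\geq 2$. I would first try to bound $\log_p|G:H|\leq 2$ in this configuration by using the structure of $G$ as a central product of groups of order $p^3$ (cf.\ the description of $E^{\pm}$). I expect this to be exactly where the argument is tight: it works cleanly when $|G|=p^3$, where every chain has length at most $3$ and $(1,G)$ is contractible because $\mathrm{Frat}(G)\neq1$. For larger orders the analogous top sections must be examined carefully before the vanishing for $\ell\geq3$ can be asserted.
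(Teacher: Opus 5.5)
Your reduction is correct as far as it goes: diagonality, the interval decomposition of Theorem~\ref{thm:decompositionMH}, and the use of Lemma~\ref{kratzerlemma} inside $K$ to discard every interval with $\mathrm{Frat}(K)\nleq H$. But the step you flag as the obstacle cannot be closed once $|G|\geq p^5$, because the statement is false there.

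Let $|G|=p^{2n+1}$ with $n\geq 2$. Then $\mathrm{Frat}(G)=Z(G)$ and $G/Z(G)\cong\mathbb Z(p)^{2n}$. Taking $H=Z(G)$ and $K=G$ gives $(Z(G),G)\cong\cS(\mathbb Z(p)^{2n})\simeq\bigvee_{p^{\binom{2n}{2}}}S^{2n-2}$ by Corollary~\ref{titsfact}. This is a summand $\mathbb Z^{p^{\binom{2n}{2}}}$ of $\MH_{2n,2n}(G_{\cL(G)})$. For $G=E^+_{32}=D_8\circ D_8$ it is a copy of $\mathbb Z^{64}$ in bidegree $(4,4)$. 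So the bound $\log_p|G:H|\leq 2$ you hoped for fails already for $H=Z(G)$.

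More generally, every $K\geq Z(G)$ with $\log_p|K:Z(G)|=\ell$ satisfies $\mathrm{Frat}(K)\leq Z(G)$, since $K/Z(G)$ is elementary abelian. Hence $(Z(G),K)$ is a wedge of $p^{\binom{\ell}{2}}$ spheres $S^{\ell-2}$, and $\MH_{\ell,\ell}\neq 0$ for every $3\leq\ell\leq 2n$. Your abelian subcase breaks the same way. $E^+_{2^{2n+1}}$ contains elementary abelian subgroups $K$ of order $2^{n+1}$ (preimages of maximal totally singular subspaces of $G/Z(G)$). Then $(1,K)$ is a noncontractible interval of length $n+1\geq 3$.

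The paper's own one-line proof does not rescue this. It asserts that maximal chains in $\cL(G)$ have length at most $2$, which is false: $\cL(G)$ has rank $2n+1$, and already for $|G|=p^3$ the chain $1<Z(G)<M<G$, with $M$ maximal, has length $3$.

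What is actually true is the case $|G|=p^3$, and there your argument is already complete. The only interval of length $\geq 3$ is $(1,G)$, which is contractible by Lemma~\ref{kratzerlemma} because $\mathrm{Frat}(G)=Z(G)\neq 1$.

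You should therefore either restrict the statement to $|G|=p^3$, or replace it by the correct general version. For $|G|=p^{2n+1}$ the magnitude homology is diagonal and nonzero exactly in bidegrees $(\ell,\ell)$ with $0\leq\ell\leq 2n$. The group $\MH_{2n+1,2n+1}$ vanishes because $(1,G)$ is contractible, in parallel with Theorem~\ref{thm:magnitude-almost-extraspecial}.
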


\begin{proof}
    Extraspecial $p$-groups are supersolvable and the maximal chain length in the associated subgroups lattice is at most $2$. The statement follows.
\end{proof}

The Pauli group $P$ on $16$ elements has been recently studied in \cite{BBR, BDDR, OR}.  It fits Definition \ref{defextrasp} since $P$ is almost extraspecial 2-group. 
In fact, \[P=\{\pm I,\pm iI,\pm X,\pm iX,\pm Y,\pm iY,\pm Z,\pm iZ\} \tag{\dag}\]
is the one-qubit Pauli group, where $i \in \mathbb{C}$, $I$ is the identity matrix 2-by-2 and the following matrices are in $\mathrm{Gl}_2(\mathbb{C})$
\[
X=\begin{pmatrix}0 & 1\\ 1 & 0
\end{pmatrix},
\qquad
Y=\begin{pmatrix}0&-i\\ i & 0\end{pmatrix},
\qquad
Z=\begin{pmatrix}1&0\\ 0&-1\end{pmatrix}.
\]
Then $| P|=16$, and $ P$ admits the presentation
\[
 P=
\langle iI,X,Z \mid (iI)^4=I,\ (iI)^{-1} X^{-1}(iI) X= (iI)^{-1} Z^{-1}(iI) Z=I, \ X^2=Z^2=I,\ ZX=(iI)^2XZ\rangle,
\]
where $\omega=iI$, $Z( P)=\langle iI\rangle\cong \mathbb{Z}(4)$, $\mathrm{comm}(P,P)=\langle -I\rangle\cong \mathbb{Z}(2)$ and a direct inspection of the maximal subgroups shows that $\mathrm{Frat}(P)=\langle -I\rangle.$ Hence $\mathrm{Frat}( P)\neq 1$, so $\cS( P)$ is contractible by \cite[Proposition~1.8]{kratzer}.
The subgroup lattice $\cL(P)$ has:
\begin{itemize}
\item $7$ subgroups of order $2$, namely;
\[ \langle X\rangle,  \langle Y\rangle, \langle Z\rangle, \langle - X\rangle, \langle -Y\rangle, \langle -Z\rangle, \langle -I\rangle; \] 
\item $7$ subgroups of order $4$, namely
\[
V_X=\langle -I,X\rangle,
V_Y=\langle -I,Y\rangle,
V_Z=\langle -I,Z\rangle,
\mathbb{Z}(4)=\langle iI\rangle,
C_X=\langle iX\rangle,
C_Y=\langle iY\rangle,
C_Z=\langle iZ\rangle;
\]
\item $7$ maximal subgroups of order $8$, namely
\[
M_X=\langle iI,X\rangle,
M_Y=\langle iI,Y\rangle,
M_Z=\langle iI,Z\rangle,
\]
which are all isomorphic to $\mathbb{Z}(4) \times \mathbb{Z}(2)$;
\[
D_{XY}=\langle X,Y\rangle,\qquad
D_{XZ}=\langle X,Z\rangle,\qquad
D_{YZ}=\langle Y,Z\rangle,
\]
which are all isomorphic to $D_8$; and
\[
Q=\langle iX,iY\rangle=\langle iX,iZ\rangle=\langle iY,iZ\rangle\cong Q_8;
\]
\item the trivial subgroup and the whole group.
\end{itemize}

A draft of the Hasse diagram of $\cL(P)$ is given in Figure~\ref{fig:pauli16-lattice}. It may be useful as a test case for the computation of the magnitude homology of subgroup lattices beyond the dihedral example. First note that the subgroups $A=\langle X \rangle$ and $B=\langle Y \rangle$ and do not commute, and  $\cS(\langle X,Y\rangle)$  is contractible. Thus the vertices $A$ and $B$ are not adjacent in  $\Gamma_{\mathrm{ct}}(P). $ In particular, $\Gamma_{\mathrm{ct}}(P)$ is not complete, and Theorem~\ref{thm:sectional-contractibility-modular} correctly detects that $P$ is not modular, even though its full proper subgroup poset $\cS(P)$ is contractible.  For $T\in\{X,Y,Z\}$, define
\[
A_T^+=\langle T\rangle,
\qquad
A_T^-=\langle -T\rangle,\qquad
V_T=\langle -I,T\rangle,
\qquad
C_T=\langle iT\rangle,\qquad
C=\langle iI\rangle.
\]
\begin{figure}[ht]
\centering
\begin{tikzpicture}[x=1.45cm,y=1.15cm,scale=0.85,transform shape,
every node/.style={font=\scriptsize}]

\node (P) at (0,8) {$P$};

\node (Dxy) at (-6,6) {$D_{XY}$};
\node (Dxz) at (-4,6) {$D_{XZ}$};
\node (Mx)  at (-2,6) {$M_X$};
\node (Dyz) at (0,6) {$D_{YZ}$};
\node (My)  at (2,6) {$M_Y$};
\node (Mz)  at (4,6) {$M_Z$};
\node (Q)   at (6,6) {$Q$};

\node (Vx) at (-6,4) {$V_X$};
\node (Vy) at (-4,4) {$V_Y$};
\node (Vz) at (-2,4) {$V_Z$};
\node (C4) at (0,4) {$\mathbb{Z}(4)$};
\node (Cx) at (2,4) {$C_X$};
\node (Cy) at (4,4) {$C_Y$};
\node (Cz) at (6,4) {$C_Z$};

\node (mX) at (-6.5,2) {$\langle -X\rangle$};
\node (X)  at (-5.2,2) {$\langle X\rangle$};
\node (mY) at (-4.2,2) {$\langle -Y\rangle$};
\node (Y)  at (-3.0,2) {$\langle Y\rangle$};
\node (mZ) at (-1.8,2) {$\langle -Z\rangle$};
\node (Z)  at (-0.6,2) {$\langle Z\rangle$};
\node (mI) at (1.2,2) {$\langle -I\rangle$};

\node (one) at (0,0) {$1$};

\draw (Dxy)--(P);
\draw (Dxz)--(P);
\draw (Mx)--(P);
\draw (Dyz)--(P);
\draw (My)--(P);
\draw (Mz)--(P);
\draw (Q)--(P);

\draw (Vx)--(Dxy);
\draw (Vx)--(Dxz);
\draw (Vx)--(Mx);

\draw (Vy)--(Dxy);
\draw (Vy)--(Dyz);
\draw (Vy)--(My);

\draw (Vz)--(Dxz);
\draw (Vz)--(Dyz);
\draw (Vz)--(Mz);

\draw (C4)--(Mx);
\draw (C4)--(My);
\draw (C4)--(Mz);

\draw (Cx)--(Mx);
\draw (Cx)--(Dyz);
\draw (Cx)--(Q);

\draw (Cy)--(Dxz);
\draw (Cy)--(My);
\draw (Cy)--(Q);

\draw (Cz)--(Dxy);
\draw (Cz)--(Mz);
\draw (Cz)--(Q);

\draw (mI)--(Vx);
\draw (mI)--(Vy);
\draw (mI)--(Vz);
\draw (mI)--(C4);
\draw (mI)--(Cx);
\draw (mI)--(Cy);
\draw (mI)--(Cz);

\draw (mX)--(Vx);
\draw (X)--(Vx);

\draw (mY)--(Vy);
\draw (Y)--(Vy);

\draw (mZ)--(Vz);
\draw (Z)--(Vz);

\draw (one)--(mX);
\draw (one)--(X);
\draw (one)--(mY);
\draw (one)--(Y);
\draw (one)--(mZ);
\draw (one)--(Z);
\draw (one)--(mI);

\end{tikzpicture}
\caption{A draft of the Hasse diagram of the subgroup lattice $\cL(P)$ of the Pauli group on $16$ elements.}
\label{fig:pauli16-lattice}
\end{figure}
\begin{prop}\label{prop:magnitude-Pauli-16}
Consider the Pauli group $G=P$ in $(\dag)$. Then the magnitude homology  of
$G_{\cL(P)}$ is given by
\[
\MH_{k,\ell}\bigl(G_{\cL(P)}\bigr)
\cong
\begin{cases}
\Z^{23},
    & (k,\ell)=(0,0),\\[1mm]
\Z^{48},
    & (k,\ell)=(1,1),\\[1mm]
\Z^{34},
    & (k,\ell)=(2,2),\\[1mm]
\Z^{8},
    & (k,\ell)=(3,3),\\[1mm]
0,
    & \text{otherwise}.
\end{cases}
\]
In particular, 
$G_{\cL(P)}$ is diagonal.
\end{prop}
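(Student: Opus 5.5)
The plan is to reduce everything to interval computations in $\cL(P)$ via Theorem~\ref{thm:decompositionMH}, and to use the Cohen--Macaulay property to get diagonality for free. Since $P$ is a finite $2$-group, it is nilpotent and hence supersolvable. Theorem~\ref{thm:bjorner} then says that $\cL(P)$ is ranked, lexicographically shellable and Cohen--Macaulay, so every open interval $(x,y)$ has reduced homology only in degree $r(y)-r(x)-2$, and that homology is free of rank $|\mu(x,y)|$. By Theorem~\ref{thm:decompositionMH}, $\MH_{k,\ell}$ vanishes for $k\neq \ell$ when $\ell\geq1$, and $\MH_{\ell,\ell}\cong\bigoplus_{r(y)-r(x)=\ell}\Z^{|\mu(x,y)|}$. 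For $\ell=0$ we have $\MH_{0,0}=\Z^{|\cL(P)|}$, the free module on the vertices, and the (empty) chain complex in bidegrees $(k,0)$ with $k>0$ gives $0$. The rank function is $\log_2|H|$, ranging over $0,\dots,4$, so only $\ell\le 4$ can contribute. What remains is to compute the Möbius values using the explicit lattice listed above (Figure~\ref{fig:pauli16-lattice}).

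For $\ell=0$ we count $1+7+7+7+1=23$ subgroups. For $\ell=1$ we count covering pairs; each has $\widetilde H_{-1}(\emptyset)=\Z$.
\begin{itemize}
\item Each of the $7$ subgroups of order $2$ covers $1$, giving $7$.
\item $\langle -I\rangle$ lies in all $7$ subgroups of order $4$, and each other involution subgroup lies in exactly one $V_T$, giving $7+6=13$.
\item Each subgroup of order $4$ lies in exactly $3$ maximal subgroups, giving $21$.
\item Each of the $7$ maximal subgroups is covered by $P$, giving $7$.
\end{itemize}
The total is $48$.

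For $\ell=2$, each interval $(x,y)$ is an antichain of $m$ elements, so $|\mu(x,y)|=m-1$.
\begin{itemize}
\item Intervals $(1,K)$ with $|K|=4$: each $V_T$ gives $2$ and each cyclic $K$ gives $0$ (as in Lemma~\ref{prop:bottom-magnitude-rank}), for a total of $6$.
\item Intervals $(A,M)$ with $|A|=2$ and $|M|=8$. For $A=\langle -I\rangle=\mathrm{Frat}(P)$, the quotient $M/\langle -I\rangle\cong\Z(2)^2$, so each of the $7$ maximal $M$ gives $2$, for $14$. For $A=\langle \pm T\rangle$, the only order-$4$ subgroup in between is $V_T$, giving $0$.
\item Intervals $(K,P)$ with $|K|=4$: each lies in $3$ maximal subgroups, giving $7\cdot 2=14$.
\end{itemize}
The total is $34$.

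For $\ell=3$ there are three kinds of interval.
\begin{itemize}
\item $(1,M)$ with $|M|=8$: all maximal subgroups ($\Z(4)\times\Z(2)$, $D_8$, $Q_8$) are non-elementary abelian, so Lemma~\ref{crucialem} gives contractibility and contribution $0$.
\item $(\langle\pm T\rangle,P)$: here $\mathrm{Frat}(\langle \pm T\rangle,P)\supseteq\langle\pm T,-I\rangle\neq\langle\pm T\rangle$ because $-I\in\mathrm{Frat}(P)$, so Lemma~\ref{kratzerlemma} gives contractibility and contribution $0$.
\item $(\langle -I\rangle,P)\cong\cS(P/\langle -I\rangle)=\cS(\Z(2)^3)$: by Corollary~\ref{titsfact} this is a wedge of $2^{3}=8$ circles.
\end{itemize}
This gives $\Z^8$.

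For $\ell=4$, $(1,P)=\cS(P)$ is contractible since $\mathrm{Frat}(P)\neq1$, so $\MH_{4,4}=0$.

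The main obstacle is bookkeeping rather than theory. One has to verify the incidences between order-$4$ and order-$8$ subgroups, in particular that every order-$4$ subgroup lies in exactly three maximal subgroups, and that $P/\langle -I\rangle$ is elementary abelian. I would check both via the correspondence between subgroups containing $\mathrm{Frat}(P)=\langle -I\rangle$ and subspaces of $\mathbb F_2^3$. The apparent exceptions (for instance the cyclic $C_T$, which still contain $-I$) fit into this correspondence, because every subgroup of order at least $4$ contains $-I$.
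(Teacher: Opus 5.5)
Your proposal is correct and follows essentially the same route as the paper. Diagonality comes from the Cohen--Macaulay property of $\cL(P)$ via Theorem~\ref{thm:bjorner}. The ranks then come from an interval-by-interval count through Theorem~\ref{thm:decompositionMH}: the same three families of length-$2$ intervals, and the same three kinds of length-$3$ intervals, where $(1,M)$ and $(\langle\pm T\rangle,P)$ are killed by Frattini arguments and $\cS(\Z(2)^3)$ gives the $8$ circles. Length $4$ vanishes by contractibility of $\cS(P)$. Your bookkeeping is slightly more explicit than the paper's, since you treat the one-point intervals $(1,C_T)$, $(1,\langle iI\rangle)$ and $(\langle\pm T\rangle,M)$ individually rather than lumping them together.
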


\begin{proof}
The subgroup lattice $\cL(P)$ has rank $4$. Its subgroups,
grouped according to rank, are:
\[
\begin{array}{c|c|c}
\text{rank}&\text{order}&\text{number of subgroups}\\
\hline
0&1&1\\
1&2&7\\
2&4&7\\
3&8&7\\
4&16&1
\end{array}
\]
More explicitly, the seven subgroups of order $2$ are
$\mathrm{Frat}(P),$ $A_X^+,A_X^-,$ $A_Y^+,A_Y^-,$
$A_Z^+,A_Z^-.$
The seven subgroups of order $4$ are $V_X,V_Y,V_Z,C,C_X,C_Y,C_Z,$
and the seven subgroups of order $8$ are
$D_{XY},D_{XZ},D_{YZ},M_X,M_Y,M_Z,Q. $
Therefore $\cL(P)$ has
\[
1+7+7+7+1=23
\]
vertices, and hence
\[
\MH_{0,0}\bigl(G_{\cL(P)}\bigr)
\cong
\Z^{23}.
\]
Since $P$ is a finite $2$-group, it is supersolvable.
Consequently, its subgroup lattice is lexicographically shellable and
Cohen--Macaulay by Theorem~\ref{thm:bjorner}. 
Hence the graph $G_{\cL(P)}$ is diagonal. It
remains to compute the diagonal ranks.

\smallskip
\noindent
\emph{Length $1$.} Every cover relation $H\lessdot K$ contributes one copy of
\[
\widetilde H_{-1}(\varnothing;\Z)\cong\Z\ .
\]
There are seven covers from rank $0$ to rank $1$.

From rank $1$ to rank $2$, the subgroup $E$ is contained in all seven
subgroups of order $4$. Moreover, $A_T^+<V_T$ and $A_T^-<V_T$ 
for each $T\in\{X,Y,Z\}$, and these are the only further cover
relations between ranks $1$ and $2$. Thus there are
$
7+6=13
$
covers between these two ranks. The subgroups of order $4$ contained in each maximal subgroup are
listed in the following table:
\[
\begin{array}{c|c}
\text{maximal subgroup}
&
\text{subgroups of order $4$}
\\
\hline
D_{XY}
&
V_X,\ V_Y,\ C_Z
\\
D_{XZ}
&
V_X,\ V_Z,\ C_Y
\\
D_{YZ}
&
V_Y,\ V_Z,\ C_X
\\
M_X
&
V_X,\ C,\ C_X
\\
M_Y
&
V_Y,\ C,\ C_Y
\\
M_Z
&
V_Z,\ C,\ C_Z
\\
Q
&
C_X,\ C_Y,\ C_Z.
\end{array}
\]
It follows that there are
$
7\cdot3=21
$
covers from rank $2$ to rank $3$. Finally, there are seven covers
from the maximal subgroups to $P$. Thus the total number
of edges in the directed Hasse graph is
$
7+13+21+7=48
$.
Consequently,
\[
\MH_{1,1}\bigl(G_{\cL(P)}\bigr)
\cong
\Z^{48}.
\]

\smallskip
\noindent
\emph{Length $2$.} An open interval of length $2$ is a discrete set. If it consists of
$m$ points, then
$
\widetilde H_0((H,K);\Z)\cong\Z^{m-1}
$.
The nonzero contributions are divided into three families. First, for each $T\in\{X,Y,Z\}$, the interval $(1,V_T)$ consists of the three  subgroups of order two
$\mathrm{Frat}(P), \ A_T^+,\ A_T^-$. Hence $$\widetilde H_0((1,V_T);\Z)\cong\Z^2. $$
There are three intervals of this form. Second, let $L$ be any maximal subgroup of $P$. Every
 subgroup of order four of $L$ contains $\mathrm{Frat}(P)$, and the preceding table shows
that there are exactly three such subgroups. Therefore $(\mathrm{Frat}(P),L)$
is a discrete set of three points, and $$
\widetilde H_0((\mathrm{Frat}(P),L);\Z)\cong\Z^2.$$ There are seven intervals of this form. Third, the same table shows that every subgroup $R$ of order $4$ is
contained in exactly three maximal subgroups of $P$. Therefore $(R,P)$ is again a discrete set of three points, and
\[
\widetilde H_0((R,P);\Z)\cong\Z^2.
\]
There are seven intervals of this form. All the remaining intervals of length $2$ contain exactly one point and consequently have trivial reduced homology. It follows that
\[
\MH_{2,2}\bigl(G_{\cL(P)}\bigr) \cong
(\Z^2)^{3}
\oplus
(\Z^2)^{7}
\oplus
(\Z^2)^{7}
\cong
\Z^{34}.
\]

\smallskip
\noindent
\emph{Length $3$.} There are two kinds of intervals of length $3$. Let first $L$ be a maximal subgroup of~$P$. Then
$(1,L)=\cS(L).$ For $L\cong D_8$, or
$L\cong \mathbb{Z}(4)\times \mathbb{Z}(2)$, or
$L\cong Q_8,$
we have $\mathrm{Frat}(L)= \mathrm{Frat}(P)\neq1.$
Hence $(1,L)$ is contractible and gives no contribution to magnitude
homology. Consider now a subgroup $A$ of $P$ of order two. Since
$\mathrm{Frat}(P)$ is central and normal, we may consider
$(\mathrm{Frat}(P),P)
\cong
\cS(P/\mathrm{Frat}(P))$, where we known (by construction of $P$) that the quotient $P/\mathrm{Frat}(P)\cong(\mathbb{Z}(2))^3$. By the Solomon--Tits Theorem (see Theorem \ref{solomontits}),
\[
\cS\bigl((\mathbb{Z}(2))^3\bigr)
\simeq
\bigvee_{2^{\binom32}}S^1
=
\bigvee_8S^1.
\]
Consequently,
\[
\widetilde H_1((\mathrm{Frat}(P),P);\Z)
\cong
\Z^8.
\]
Suppose instead that $A=A_T^+$ or
$A=A_T^-,$ where $T\in\{X,Y,Z\}$. The maximal subgroups containing $A$ are
precisely the three maximal subgroups containing $V_T$. Their
intersection is $V_T$. Thus $\mathrm{Frat}(A,P)=V_T>A.$
By Lemma \ref{kratzerlemma}, the interval $(A,P)$ is contractible.
It follows that the only nonzero interval contribution of length $3$
comes from $(\mathrm{Frat}(P),P)$. Therefore
\[
\MH_{3,3}\bigl(G_{\cL(P)}\bigr)
\cong
\Z^8.
\]

\smallskip
\noindent
\emph{Length $4$.} The only interval of length $4$ is $(1,P)=\cS(P).$
Since $\mathrm{Frat}(P)\neq1$,this interval is contractible. Hence
\[
\MH_{4,4}\bigl(G_{\cL(P)}\bigr)=0.
\]
There are no intervals of length greater than $4$. This completes the
proof.
\end{proof}

\begin{figure}
    \centering
    \includegraphics[width=0.7\linewidth]{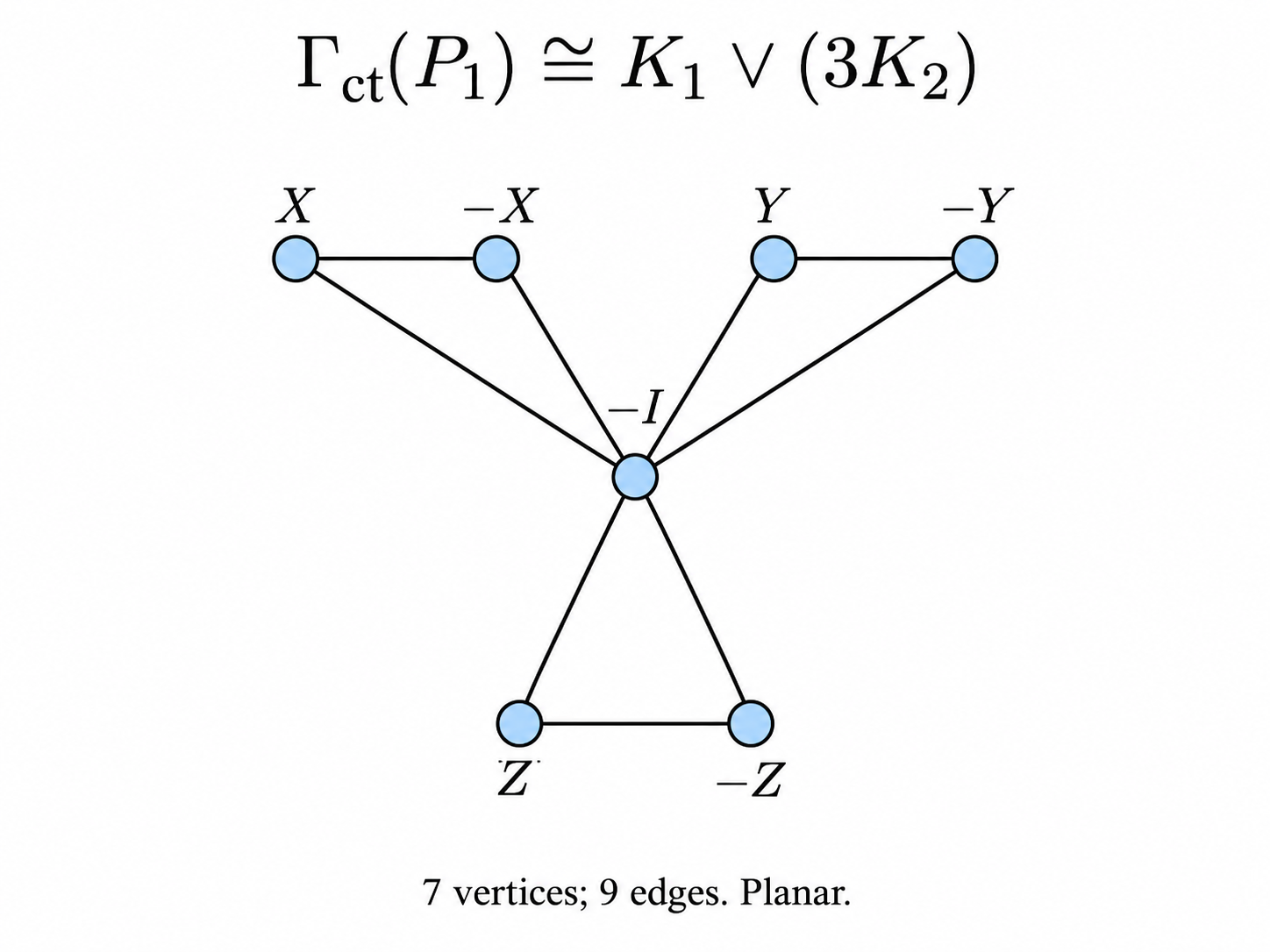}
    \caption{Sectional contractibility graph of the Pauli group $P_1$ and its planarity.}
    \label{pone}
\end{figure}

\begin{remark} \rm
The nonzero M\"obius values appearing in the preceding computation
are
\[
\mu(H,K)=
\begin{cases}
-1,
&
K\text{ covers }H,
\\[1mm]
2,
&
(H,K)=(1,V_T),
\quad T\in\{X,Y,Z\},
\\[1mm]
2,
&
(H,K)=(E,L),
\quad |L|=8,
\\[1mm]
2,
&
(H,K)=(R,P),
\quad |R|=4,
\\[1mm]
-8,
&
(H,K)=(E,P).
\end{cases}
\]
Every other interval of positive length has M\"obius value zero.
Accordingly,  diagonal ranks are
\[
23,
\qquad
48,
\qquad
(3+7+7)\cdot2=34,
\qquad
8.
\]
It is also useful to note that the bigraded rank polynomial of the magnitude homology is 
\[
\sum_{k,\ell\geq0}
\operatorname{rank}_{\Z}
\MH_{k,\ell}\bigl(G_{\cL(P)}\bigr)
u^kq^\ell
=
23+48uq+34u^2q^2+8u^3q^3.
\]
Since the magnitude homology is diagonal, its graded Euler
characteristic is
\[
23-48q+34q^2-8q^3.
\]
\end{remark}

On the basis of the notions which we have seen in Definition \ref{defextrasp}, it is good to point out that  $P$ is almost extraspecial and not extraspecial. In fact,
 the preceding computations are very rich and cannot be deduced from a statement
concerning extraspecial $p$-groups only. Compare Example \ref{magnitudehomologyofd8} and Proposition \ref{prop:magnitude-Pauli-16}. We now extend the preceding computation for the Pauli group of order
$16$ to the full family of almost extraspecial $2$-groups.

\begin{remark} \rm
The full $r$-qubit Pauli group, including the scalar matrices
$\{\pm I,\pm iI\}$, is an almost extraspecial $2$-group of order $2^{2r+2}$ of the form \[P_r=E^+_{2^{2r+1}} \circ \mathbb{Z}(2^2). \tag {\dag \dag}\] In particular, the Pauli group of order $16$ considered previously is
the case $r=1$, since it can be seen that $P\cong D_8\circ \mathbb{Z}(4)$, see \cite{BBR, BDDR, OR}. \end{remark}

Fix an almost extraspecial $2$-group $P_r$, and note that
\[
\mathrm{Frat}(P_r)=\mathrm{comm}(P_r,P_r) \cong \mathbb{Z}(2).
\]
Since $\mathrm{Frat}(P_r)$, the Burnside Basis Theorem~\cite{berk1}  (or by direct inspection on the construction of almost extraspecial 2-groups in Definition \ref{defextrasp}) we have that
\[
P_r/\mathrm{Frat}(P_r)\cong \mathbb{Z}(2)^{2r+1} \ .
\]
For integers $0\leq b\leq a$, let
\[
\qbinom{a}{b}
=
\prod_{j=0}^{b-1}
\frac{2^{a-j}-1}{2^{b-j}-1}
\]
denote the Gaussian binomial coefficient over $\mathbb F_2$ and use
the convention that $\qbinom{a}{b}=0$ when
$b<0$ or $b>a$, while $
\qbinom{a}{0}=1$ when $b=0$. The following notion can be found in \cite{berk1}, but is manipulated ad hoc here. This Gaussian binomial coefficients helps to count the total number of linear subspaces
\[
T_m=\sum_{j=0}^{m}\qbinom{m}{j}
\]
of $\mathbb F_2^m$. The quotient $P_r/\mathrm{Frat}(P_r)$ is a vector space over $\mathbb F_2$ in the present situation and every element of $P_r/\mathrm{Frat}(P_r)$ turns out to be a square, or product of squares.  To see this, choose the nontrivial element $z\in \mathrm{Frat}(P_r)$. Since $P_r/\mathrm{Frat}(P_r)$ is
elementary abelian 2-group of finite rank, one has $x^2\in \mathrm{Frat}(P_r)$ for every $x\in P_r$. Define $ q\colon P_r/\mathrm{Frat}(P_r) \to \mathbb F_2$
by $q(x\mathrm{Frat}(P_r))=0$ if $x^2=1$, and $q(x\mathrm{Frat}(P_r))=1 $ if $x^2=z$. Then the commutator $
x^{-1}y^{-1}xy$ is $z^m$ for some $m\in\{0,1\}$. The vector space $P_r/\mathrm{Frat}(P_r)$ has dimension $2r+1$ and Witt index $r$. A subspace $W\leq P_r/\mathrm{Frat}(P_r)$ is called \emph{totally singular} if $q(w)=0$
for every $w\in W$. 

\begin{lemma}\label{lem:elementary-abelian-singular}
Let $E$ be a subgroup of $P_r$ containing $\mathrm{Frat}(P_r)$. Then $E$ is elementary
abelian if and only if $E/\mathrm{Frat}(P_r)$ is a totally singular subspace of $P_r/\mathrm{Frat}(P_r)$.
\end{lemma}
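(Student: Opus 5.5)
Write $F=\mathrm{Frat}(P_r)=\langle z\rangle$ and $\overline{P}=P_r/F$, an $\mathbb F_2$-space. The plan is to check the two defining features of an elementary abelian group, namely exponent $2$ and commutativity, separately. For each one we read off what it means in $\overline P$ using the quadratic map $q$ and its associated commutator form. The key identity to establish first is that, for all $x,y\in P_r$,
\[
(xy)^2 = x^2y^2[y,x],
\]
which holds because $[x,y]\in F\le Z(P_r)$. All squares and commutators lie in $F$, which is central of order $2$. Hence this reads $q(\bar x+\bar y)=q(\bar x)+q(\bar y)+\beta(\bar x,\bar y)$, where $\beta(\bar x,\bar y)\in\mathbb F_2$ is defined by $[x,y]=z^{\beta(\bar x,\bar y)}$. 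So $\beta$ is the polarization of $q$. Well-definedness of $q$ and $\beta$ on cosets follows because $F$ is central and $z^2=1$: indeed $(xz)^2=x^2$ and $[xz,y]=[x,y]$.

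\emph{($\Rightarrow$).} Suppose $E\ge F$ is elementary abelian. Then $w^2=1$ for every $w\in E$, so $q(\bar w)=0$ for all $\bar w\in E/F$. Hence $E/F$ is totally singular. This direction is immediate.

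\emph{($\Leftarrow$).} Suppose $q$ vanishes on $E/F$. Then every $x\in E$ satisfies $x^2=1$, since $q(\bar x)=0$ means exactly $x^2=1$ (not $z$). For $x,y\in E$ we have $\bar x+\bar y\in E/F$, so the polarization identity gives $\beta(\bar x,\bar y)=q(\bar x+\bar y)-q(\bar x)-q(\bar y)=0$. Thus $[x,y]=1$. Therefore $E$ is abelian of exponent $2$, that is, elementary abelian. Equivalently, one can argue directly: a group of exponent $2$ is abelian, since $xy=(xy)^{-1}=y^{-1}x^{-1}=yx$.

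\textbf{Main obstacle.} The only delicate point is to make the definition of $q$ in the text precise. The map $q$ must be well defined on cosets, and we need $x^2\in\{1,z\}$ for every $x$. This holds because $x^2\in F$, as $\overline P$ is elementary abelian, and $|F|=2$. With that settled, the exponent-$2$ remark already closes the argument, so the polarization step is a consistency check rather than a necessity. One remark for the reader: the lemma uses only that $F$ is central of order $2$ with elementary abelian quotient. It does not use the Witt index or the almost extraspecial structure.
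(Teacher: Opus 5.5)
Your proof is correct and follows the same route as the paper: you read total singularity of $E/\mathrm{Frat}(P_r)$ as $x^2=1$ for every $x\in E$, and commutativity of $E$ then follows. You also supply two justifications the paper only asserts, namely that $q$ is well defined on cosets and that $xy=yx$ holds, which you obtain either from the polarization identity or from the fact that a group of exponent $2$ is abelian.
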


\begin{proof}
Suppose that $E$ is elementary abelian. Every element of $E$ has
order at most $2$, so $q(x\mathrm{Frat}(P_r))=0$ for every $x\in E$. Moreover, all elements of $E$ commute. Hence $E/\mathrm{Frat}(P_r)$ is totally singular. Conversely, suppose that $E/\mathrm{Frat}(P_r)$ is totally singular. Then $x^2=1$ for every $x\in E$, and $xy=yx$ for every $x,y\in E$. Therefore $E$ is elementary abelian.
\end{proof}

For $1\leq t\leq r+1$, define
\[
e_t(r)
 =
\qbinom{r}{t-1}
\prod_{j=0}^{t-2}\bigl(2^{r-j}+1\bigr) \ \mbox{and put}  \ 
e_t(r)=0
\ \text{if }t<1\text{ or }t>r+1.
\]
As usual, an empty product is equal to $1$, so $e_1(r)=1.$

\begin{prop}\label{prop:number-elementary-containing-N}
The number of elementary abelian subgroups $E$ of $P_r$ containing
$\mathrm{Frat}(P_r)$ and of $|E|=2^t$
is precisely $e_t(r)$.
\end{prop}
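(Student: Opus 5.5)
The plan is to reduce the count to a classical count of isotropic subspaces in a symplectic space over $\mathbb F_2$. Write $N=\mathrm{Frat}(P_r)=\langle z\rangle$ and $V=P_r/N\cong\mathbb F_2^{2r+1}$. By Lemma~\ref{lem:elementary-abelian-singular}, the elementary abelian subgroups $E\geq N$ with $|E|=2^t$ correspond bijectively, via $E\mapsto E/N$, to totally singular subspaces $W\leq V$ of dimension $k=t-1$. So it suffices to show that $V$ has exactly $\qbinom{r}{k}\prod_{j=0}^{k-1}(2^{r-j}+1)$ totally singular $k$-subspaces. This agrees with $e_t(r)$, including the vanishing for $k>r$ and $e_1(r)=1$.

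\emph{Step 1: $q$ is a quadratic form.} Using $(xy)^2=x^2y^2[y,x]$, which holds since commutators are central of order dividing $2$, one gets
\[
q(\bar x\bar y)=q(\bar x)+q(\bar y)+f(\bar x,\bar y),
\]
where $f(\bar x,\bar y)\in\mathbb F_2$ is defined by $[x,y]=z^{f(\bar x,\bar y)}$. Hence $q$ is a quadratic form on $V$ with polar form $f$. In particular, $W$ is totally singular if and only if $q|_W=0$, and this forces $f|_W=0$.

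\emph{Step 2: the radical of $f$.} From $(\dag\dag)$, $P_r=E^+_{2^{2r+1}}\circ\mathbb Z(4)$ and $Z(P_r)=\langle c_0\rangle\cong\mathbb Z(4)$ with $c_0^2=z$. The radical of $f$ is the image of $Z(P_r)$, namely the line $\langle c\rangle$ with $c=c_0N$. Since $c_0^2=z$, we have $q(c)=1$. The quotient $\overline V=V/\langle c\rangle$ is the $2r$-dimensional nondegenerate symplectic space coming from $E^+_{2^{2r+1}}/Z$. I expect this step, namely verifying carefully that the radical is exactly one-dimensional and that $q(c)=1$ from the central product description, to be the main point requiring care, although it is essentially structural.

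\emph{Step 3: a bijection.} Totally singular $k$-subspaces $W\leq V$ correspond bijectively to totally isotropic $k$-subspaces $U\leq\overline V$. In one direction, since $q(c)=1$, any totally singular $W$ avoids $c$, so it maps isomorphically onto its image $U$, and $U$ is isotropic because $f|_W=0$. In the other direction, given an isotropic $U$, its preimage $U'$ has dimension $k+1$ and satisfies $f|_{U'}=0$ (as $c$ is radical). So $q|_{U'}$ is linear, and it is nonzero because $q(c)=1$. Its kernel is the unique totally singular complement to $\langle c\rangle$ in $U'$. These two constructions are mutually inverse.

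\emph{Step 4: counting.} Count ordered bases $(u_1,\dots,u_k)$ of isotropic subspaces of $\overline V$. The vector $u_i$ must lie in $\langle u_1,\dots,u_{i-1}\rangle^{\perp}\setminus\langle u_1,\dots,u_{i-1}\rangle$, which gives $2^{2r-i+1}-2^{i-1}$ choices. Divide the resulting product by $|\mathrm{GL}_k(2)|=\prod_{i=1}^{k}(2^k-2^{i-1})$. Factor $2^{2r-i+1}-2^{i-1}=2^{i-1}(2^{r-i+1}-1)(2^{r-i+1}+1)$. After cancelling the powers of $2$ and collecting the factors $2^{r-j}-1$ against $\prod_{j}(2^{k-j}-1)$, the quotient becomes $\qbinom{r}{k}\prod_{j=0}^{k-1}(2^{r-j}+1)$. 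For $k>r$ the count is $0$, since a totally isotropic subspace of the nondegenerate $2r$-space has dimension at most $r$. Setting $k=t-1$ gives $e_t(r)$, as claimed.
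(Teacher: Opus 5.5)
Your proof is correct. Its first move is the same as the paper's: you use Lemma~\ref{lem:elementary-abelian-singular} to pass to totally singular $(t-1)$-dimensional subspaces of $V=P_r/\mathrm{Frat}(P_r)$. The routes differ in the counting step. The paper simply invokes ``the standard enumeration'' of totally singular subspaces in a $(2r+1)$-dimensional quadratic space of Witt index $r$. It never checks that $q$ is a quadratic form, that it is nondegenerate, or that its Witt index is $r$.

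You establish all three claims:
\begin{enumerate}
\item[(1)] Step 1 shows that the commutator pairing is the polar form of $q$.
\item[(2)] Step 2 identifies $\mathrm{rad}(f)=Z(P_r)/\mathrm{Frat}(P_r)$ with $q(c)=1$. This step is easier than you feared. It follows at once from $Z(P_r)\cong\mathbb Z(4)$ and needs nothing from the central product decomposition. Nondegeneracy of $V/\langle c\rangle$ is automatic once you quotient by the radical.
\item[(3)] Step 3 is the classical projection from the nucleus. It identifies the parabolic quadric on $V$ with the symplectic polar space on $V/\langle c\rangle$. There the ordered-basis count is elementary, and it gives both the Witt index $r$ and the vanishing of $e_t(r)$ for $t>r+1$: the factor with $i=r+1$ is zero.
\end{enumerate}

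The paper's version is shorter but rests on a citation and on unverified structural claims. Yours is self-contained and proves those claims along the way.
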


\begin{proof}
By Lemma~\ref{lem:elementary-abelian-singular}, these subgroups
correspond bijectively to totally singular subspaces of dimension
$t-1$ in the quadratic space $V_r$. The standard enumeration of
totally singular subspaces in a quadratic space of dimension $2r+1$
and Witt index $r$ gives $e_t(r)$.
\end{proof}

The number of subgroups of finite $p$-group has been largely investigated by Berkovich and others, see \cite{berk1}.

\begin{lemma}\label{prop:number-subgroups-almost-extraspecial}
The total number of subgroups
$s_r=|\cL(P_r)|$ is given by 
\[
s_r
 =
1+T_{2r+1}
+\sum_{i=1}^{r}2^i e_{i+1}(r).
\]
\end{lemma}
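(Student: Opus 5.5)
The plan is to count the subgroups $H\leq P_r$ by splitting them into two classes, according to whether $H$ contains $N\coloneqq \mathrm{Frat}(P_r)\cong\mathbb Z(2)$ or not. This is natural because $N$ has prime order, so for every $H$ either $N\leq H$ or $H\cap N=1$.

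\emph{Subgroups containing $N$.} By the correspondence theorem, these are in bijection with the subgroups of $P_r/N\cong\mathbb Z(2)^{2r+1}$, that is, with the $\mathbb F_2$-subspaces of $\mathbb F_2^{2r+1}$. Their number is therefore $T_{2r+1}$. This class includes $N$ itself and $P_r$.

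\emph{Subgroups with $H\cap N=1$.} First I will show that every such $H$ is elementary abelian. For $x\in H$ we have $x^2\in N$, since $P_r/N$ is elementary abelian, so $x^2\in H\cap N=1$. Likewise, for $x,y\in H$ the commutator $x^{-1}y^{-1}xy$ lies in $\mathrm{comm}(P_r,P_r)=N$ and in $H$, hence it is trivial. It follows that $E\coloneqq HN=H\times N$ is an elementary abelian subgroup containing $N$, with $|E|=2|H|$.

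Conversely, fix an elementary abelian $E\geq N$ with $|E|=2^{i+1}$. The subgroups $H\leq E$ with $HN=E$ and $H\cap N=1$ are exactly the hyperplanes of the $\mathbb F_2$-space $E$ that do not contain the line $N$. There are $2^{i+1}-1$ hyperplanes in total and $2^i-1$ of them contain $N$, which leaves $2^i$ complements. Since $H$ determines $E=HN$, the second class is counted by
\[
\sum_{i\geq0}2^i\cdot\#\{E\geq N \text{ elementary abelian},\ |E|=2^{i+1}\}=\sum_{i\geq 0}2^i e_{i+1}(r),
\]
where the last equality uses Proposition~\ref{prop:number-elementary-containing-N}. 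The term $i=0$ corresponds to $E=N$ and $H=1$, and it contributes $2^0e_1(r)=1$. Since $e_t(r)=0$ for $t>r+1$, the sum stops at $i=r$; this reflects the Witt index $r$ bounding the dimension of totally singular subspaces.

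Adding the two classes gives $s_r=T_{2r+1}+1+\sum_{i=1}^r 2^ie_{i+1}(r)$, as claimed.

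The only delicate point is the step showing that $H\cap N=1$ forces $H$ to be elementary abelian. It relies on $N$ containing both the squares and the commutators of $P_r$, which holds because $N=\mathrm{Frat}(P_r)=\mathrm{comm}(P_r,P_r)$ and $P_r/N$ is elementary abelian. Once that is in place, the rest of the count is routine linear algebra over $\mathbb F_2$.
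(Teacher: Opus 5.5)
Your argument is correct. It is essentially the count the paper intends: the paper itself only points to Berkovich and to ``inspection'', but your decomposition is exactly the one the paper uses in the second and third families of the proof of Lemma~\ref{prop:number-elementary-sections}. There, subgroups containing $\mathrm{Frat}(P_r)$ are counted by $T_{2r+1}$, and subgroups meeting it trivially are counted as the $2^i$ complements of $\mathrm{Frat}(P_r)$ in each elementary abelian $E\geq\mathrm{Frat}(P_r)$ of order $2^{i+1}$, via Proposition~\ref{prop:number-elementary-containing-N}. The only step left tacit is that $HN=H\times N$ is elementary abelian; this holds because $N$, being normal of order $2$, is central.
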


\begin{proof} See \cite{berk1} or use inspection on the structure of $P_r$.
\end{proof}

The following observation is useful beyond the almost extraspecial
case.

\begin{lemma}\label{prop:intervals-finite-2-groups}
Let $G$ be a finite $2$-group with $H<K\leq G$ and $\ell=\log_2|K:H|.$
Then one (and only one) of the following alternatives occurs:
\begin{enumerate}
    \item[{\rm (i).}] If $\mathrm{Frat}(K)\nleq H$, then the open interval $(H,K)$ is
    contractible;

    \item[{\rm (ii).}] If $\mathrm{Frat}(K)\leq H$, then $H$ is normal in $ K$,
        $K/H\cong \mathbb{Z}(2)^\ell$ and    $    (H,K)\simeq\cS\bigl(\mathbb{Z}(2)^\ell\bigr)$.
        Moreover in this second case, we have
    \[
    |\Delta(H,K)|
    \simeq
    \bigvee_{2^{\binom{\ell}{2}}}S^{\ell-2} \ \ \mbox{and} \ \
\widetilde H_j((H,K);\mathbb Z)
\cong
\begin{cases}
\mathbb Z^{\,2^{\binom{\ell}{2}}},
   &j=\ell-2,\\
0,&j\neq\ell-2.
\end{cases}
\]
\end{enumerate}
\end{lemma}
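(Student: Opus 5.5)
The plan is to split according to whether $\mathrm{Frat}(K)\leq H$ and, in each case, reduce to a result already available: Lemma~\ref{kratzerlemma} in the first case, and the Solomon--Tits computation for elementary abelian groups in the second. The two alternatives are complementary conditions on the pair $(H,K)$, so exactly one of them occurs. Throughout, the ambient group in Lemma~\ref{kratzerlemma} and in the Frattini arguments will be $K$ rather than $G$. This is harmless, since the open interval $(H,K)$ in $\cL(G)$ coincides with the interval $(H,K)$ in $\cL(K)$.

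\emph{Case} (i): assume $\mathrm{Frat}(K)\nleq H$. I will show that $\mathrm{Frat}(H,K)\neq H$, where $\mathrm{Frat}(H,K)$ is the intersection of the maximal subgroups of $K$ containing $H$. Such maximal subgroups exist because $H<K$ and $K$ is finite. Each of them contains $H$, and each contains $\mathrm{Frat}(K)$, since $\mathrm{Frat}(K)$ is the intersection of all maximal subgroups of $K$. Because $\mathrm{Frat}(K)$ is normal in $K$, the product $H\,\mathrm{Frat}(K)$ is a subgroup, and therefore
\[
H< H\,\mathrm{Frat}(K)\leq \mathrm{Frat}(H,K).
\]
The first inclusion is strict by the hypothesis $\mathrm{Frat}(K)\nleq H$. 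Lemma~\ref{kratzerlemma}, applied inside $K$, then gives that $(H,K)$ is contractible. Note that this case forces $\ell\geq 2$: if $H$ were maximal in $K$, it would contain $\mathrm{Frat}(K)$.

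\emph{Case} (ii): assume $\mathrm{Frat}(K)\leq H$. For a finite $2$-group, $K/\mathrm{Frat}(K)$ is elementary abelian by the Burnside Basis Theorem. Hence $H/\mathrm{Frat}(K)$ is normal in $K/\mathrm{Frat}(K)$, and by the correspondence theorem $H\trianglelefteq K$. The quotient $K/H$ is a quotient of $K/\mathrm{Frat}(K)$, so it is elementary abelian of order $2^\ell$, that is, $K/H\cong\mathbb Z(2)^\ell$. The correspondence theorem also gives a poset isomorphism $[H,K]\cong\cL(K/H)$, which restricts on open intervals to $(H,K)\cong\cS(\mathbb Z(2)^\ell)$.

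For $\ell\geq2$, the corollary of Corollary~\ref{titsfact} for elementary abelian $p$-groups, with $p=2$, gives $|\Delta(H,K)|\simeq\bigvee_{2^{\binom{\ell}{2}}}S^{\ell-2}$. The stated reduced homology then follows from the homology of a wedge of spheres. The degenerate case $\ell=1$ has to be checked separately. Here the interval is empty, which is interpreted as $S^{-1}$, a wedge of $2^0=1$ copy of the $(-1)$-sphere. Its reduced homology is $\mathbb Z$ in degree $-1$ and $0$ elsewhere, which matches the formula.

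The only real subtlety I expect is bookkeeping: making sure Lemma~\ref{kratzerlemma} is applied with $K$ as the ambient group, and handling the empty interval when $\ell=1$ consistently with the reduced-homology conventions. The rest consists of standard facts about Frattini subgroups of $p$-groups together with the Solomon--Tits count.
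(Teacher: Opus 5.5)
Your proof is correct. Case (ii) matches the paper's argument. The paper phrases the key input as $[K,K]\leq\mathrm{Frat}(K)\leq H$ and $K^2\leq\mathrm{Frat}(K)\leq H$, which is your observation that $K/\mathrm{Frat}(K)$ is elementary abelian. It then uses the same correspondence-theorem and Solomon--Tits step, with the convention $\widetilde H_{-1}(\varnothing;\mathbb Z)\cong\mathbb Z$ for $\ell=1$.

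Case (i) is where you take a different route. The paper does not invoke Lemma~\ref{kratzerlemma}. Instead it defines the map $c(X)=X\,\mathrm{Frat}(K)$ on $(H,K)$ and uses the non-generator property of $\mathrm{Frat}(K)$ to check that $c(X)<K$. It then observes that $c$ is a closure operator whose image has least element $H\,\mathrm{Frat}(K)$, so the image is a cone. You instead verify the hypothesis of Lemma~\ref{kratzerlemma} through $H<H\,\mathrm{Frat}(K)\leq\mathrm{Frat}(H,K)$ and quote the lemma with $K$ as ambient group. Your version is shorter and reuses a result the paper has already stated. The paper's version is self-contained and exhibits the contraction explicitly.

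The two are closer than they look. In a finite $p$-group, the maximal subgroups of $K$ containing $H$ correspond to the hyperplanes of $K/\mathrm{Frat}(K)$ containing $H\,\mathrm{Frat}(K)/\mathrm{Frat}(K)$. Hence $\mathrm{Frat}(H,K)=H\,\mathrm{Frat}(K)$ exactly, and the cone point in the paper's argument is precisely the subgroup on which Lemma~\ref{kratzerlemma} hinges. Neither argument uses the $2$-group hypothesis in case (i); it is needed only in case (ii).
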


\begin{proof} (i). Suppose  $\mathrm{Frat}(K)\nleq H$ and define the map
$c\colon(H,K)\longrightarrow(H,K)$ from any $H<X<K$ to
$c(X)= X \mathrm{Frat}(K)$ again in the interval $(H,K)$.
It can be easily checked that $c$ is well defined, since 
$X\mathrm{Frat}(K)=K$ would  imply
$X=K$ by the Frattini properties of $K$, which is impossible. Thus
$ H<X\mathrm{Frat}(K)<K,$ since $\mathrm{Frat}(K)\nleq H$. The map $c$ is an ascending closure operator between intervals, since $X\leq c(X)$ and $c(c(X))=c(X)$. Its image has the least element
$H\mathrm{Frat}(K)$. Hence its image is contractible, and so also
 $(H,K)$ is contractible, since we noted that $c$ is a closure operator.

(ii). Suppose now $\mathrm{Frat}(K)\leq H.$ Since the commutator subgroup $[K,K]$ is contained in $\mathrm{Frat}(K)$, and so in $H$, we conclude that the subgroup $H$ is normal in $K$. Moreover,
$K^2=\{k^2 \mid k \in K\}\leq\mathrm{Frat}(K)\leq H.$ Therefore $K/H$ is abelian and every element of $K/H$ has order at
most $2$. Hence $K/H\cong \mathbb{Z}(2)^\ell.$ Looking at the corresponding interval, we find that
$[H,K]\simeq\cL(K/H),$ and consequently
\[
(H,K)\simeq\cS(K/H)
\simeq\cS\bigl(\mathbb{Z}(2)^\ell\bigr).
\]
The Solomon--Tits Theorem (see Theorem \ref{solomontits}) implies
$|\Delta(H,K)|$ as requested, and the homology formula follows. Note that
for $\ell=1$, the interval is empty. We use the usual augmented homology convention
$\widetilde H_{-1}(\varnothing;\mathbb Z)\cong\mathbb Z,$
which agrees with
$2^{\binom12}=1.$ The result follows.
\end{proof}

The following result can be seen as a consequence of various works, such as~\cite{quillen78, Shareshian01, sharu12, sharu16}, but we provide a different argument and this may be instructive. 

\begin{cor}\label{cor:magnitude-finite-2-groups-diagonal}
Let $G$ be a finite $2$-group. Then 
$G_{\cL(G)}$ is diagonal and its magnitude homology torsion-free. 
\end{cor}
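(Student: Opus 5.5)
The plan is to combine the interval decomposition of Theorem~\ref{thm:decompositionMH} with the interval dichotomy of Lemma~\ref{prop:intervals-finite-2-groups}. First I will check that $\cL(G)$ is ranked, so that $G_{\cL(G)}$ makes sense as the Hasse graph of a ranked poset. Every maximal subgroup of a finite $2$-group has index $2$, so every cover $H\lessdot K$ in $\cL(G)$ satisfies $|K:H|=2$. Hence the function $r(H)=\log_2|H|$ is a rank function, all maximal chains have length $\log_2|G|$, and the distance in $G_{\cL(G)}$ is $d(H,K)=\log_2|K:H|$ whenever $H\leq K$.

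Next I treat the bidegrees with $\ell=0$. The magnitude chain complex in length $0$ consists only of single vertices, so $\MH_{k,0}=0$ for $k>0$ and $\MH_{0,0}\cong\mathbb Z^{|\cL(G)|}$. This is free and concentrated on the diagonal.

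For $\ell\geq 1$, Theorem~\ref{thm:decompositionMH} gives
\[
\MH_{k,\ell}(G_{\cL(G)})\cong\bigoplus_{H<K,\ \log_2|K:H|=\ell}\widetilde H_{k-2}((H,K);\mathbb Z).
\]
I then apply Lemma~\ref{prop:intervals-finite-2-groups} to each summand. If $\mathrm{Frat}(K)\nleq H$, the interval $(H,K)$ is contractible and all its reduced homology vanishes. If $\mathrm{Frat}(K)\leq H$, the reduced homology is $\mathbb Z^{2^{\binom{\ell}{2}}}$ in degree $\ell-2$ and zero elsewhere. The case $\ell=1$ falls under the second alternative automatically, with $\widetilde H_{-1}(\varnothing)=\mathbb Z$, consistent with the augmented convention in that lemma. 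Every summand is therefore free abelian and nonzero only when $k-2=\ell-2$, that is, when $k=\ell$. This gives diagonality and torsion-freeness simultaneously, together with the explicit count
\[
\operatorname{rank}\MH_{\ell,\ell}=2^{\binom{\ell}{2}}\cdot\#\{(H,K)\mid H\leq K,\ \mathrm{Frat}(K)\leq H,\ |K:H|=2^\ell\}.
\]

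The main obstacle is the justification of Lemma~\ref{prop:intervals-finite-2-groups}(i), since the whole argument rests on it. The issue is that $X\mapsto X\,\mathrm{Frat}(K)$ must be order preserving and must stay inside the open interval. To see this, note that $\mathrm{Frat}(K)\trianglelefteq K$, so $X\,\mathrm{Frat}(K)$ is a subgroup. It is proper in $K$ by the non-generator property of $\mathrm{Frat}(K)$, and it strictly contains $H$. The image of the map has the least element $H\,\mathrm{Frat}(K)$, which lies in $(H,K)$ precisely because $\mathrm{Frat}(K)\nleq H$ and $H\,\mathrm{Frat}(K)\neq K$. A closure operator on a finite poset induces a homotopy equivalence onto its image, by Quillen's fiber lemma or the standard order-homotopy argument, and a poset with a least element is contractible.

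As a cross-check and alternative route, I would also note that $G$ is supersolvable. Theorem~\ref{thm:bjorner} then makes $\cL(G)$ Cohen--Macaulay over $\mathbb Z$, so diagonality follows directly from Theorem~\ref{thm:decompositionMH}. Torsion-freeness would additionally need freeness of the top homology, which the wedge-of-spheres description above already provides.
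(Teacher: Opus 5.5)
Your proposal is correct and follows the same route as the paper. The paper also combines the interval decomposition of Theorem~\ref{thm:decompositionMH}, using the rank $r(H)=\log_2|H|$, with the Frattini dichotomy of Lemma~\ref{prop:intervals-finite-2-groups}, so that each summand is either zero or free and concentrated in degree $k=\ell$. Your extra details (the $\ell=0$ case, the $\ell=1$ convention, and the closure-operator check for part (i), which is exactly the paper's argument) fill in steps the paper leaves implicit. One small aside on your supersolvable cross-check: there torsion-freeness is automatic, because $\widetilde H_{\ell-2}((H,K);\mathbb Z)$ is the top homology of the $(\ell-2)$-dimensional order complex of $(H,K)$ and is therefore free.
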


\begin{proof} We consider $H$ and $K$ as per Lemma \ref{prop:intervals-finite-2-groups} and recall  that the magnitude homology is
\[
\MH_{k,\ell}\bigl(G_{\cL(G)}\bigr)
\cong
\bigoplus_{\substack{H<K\\
r(K)-r(H)=\ell}}
\widetilde H_{k-2}((H,K);\mathbb Z).
\]
Since the subgroup lattice of a finite $2$-group is ranked by
\[
r(H)=\log_2|H|,
\]
the condition $r(K)-r(H)=\ell$ is equivalent to
\[
|K:H|=2^\ell.
\]
The result follows from
Lemma ~\ref{prop:intervals-finite-2-groups}.
\end{proof}

\begin{figure}
    \centering
    \includegraphics[width=1 \linewidth]{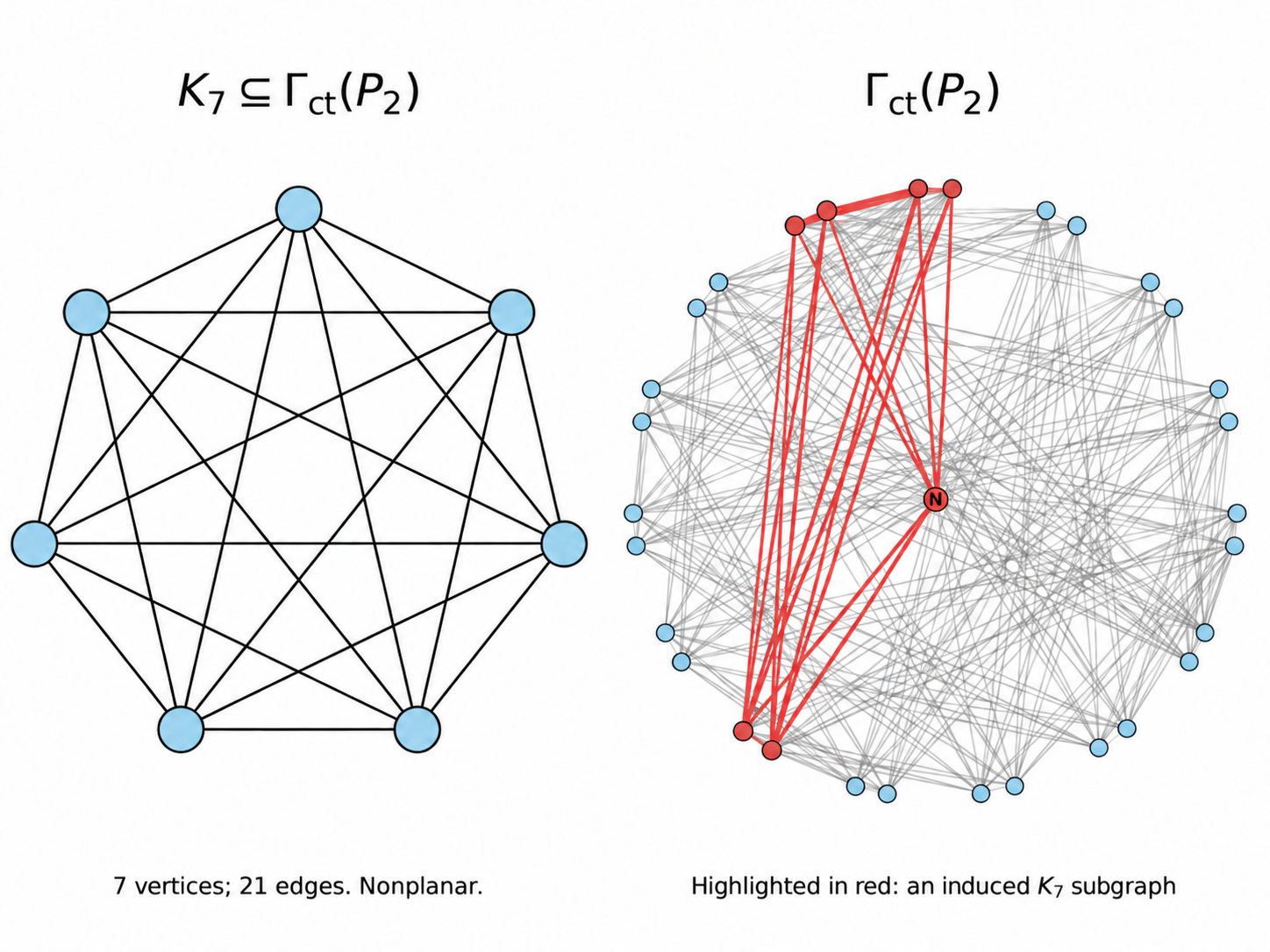}
    \caption{Sectional contractibility graph of the Pauli group $P_2$ and its nonplanarity.}
    \label{ptwononplanar}
\end{figure}

Again some further notions from \cite{berk1} can be recalled here, but we specialize it to the present case of almost extraspecial 2-groups.

\begin{definitionn} \rm
For $1\leq\ell\leq2r+1$ and $P_r$ almost extraspecial 2-group, define
\[
\nu_\ell(r)
 =
\left|
\left\{
(H,K)\ \middle|\
H\unlhd K\leq P_r,\quad
K/H\cong \mathbb{Z}(2)^\ell
\right\}
\right|
\]
as the number of elementary abelian sections of
rank $\ell$, counted together with their lower and upper endpoints.
\end{definitionn}

\begin{lemma}\label{prop:number-elementary-sections}
For $1\leq\ell\leq2r+1$ and $P_r$ almost extraspecial 2-group, we have
\[
\nu_\ell(r)
=
\qbinom{2r+1}{\ell}T_{2r+1-\ell} 
+e_\ell(r)+2^\ell e_{\ell+1}(r) 
+\sum_{m=\ell+1}^{r}
   2^m\qbinom{m}{\ell}e_{m+1}(r)
+\sum_{m=\ell+1}^{r+1}
   2^{m-\ell}
   \qbinom{m-1}{\ell-1}e_m(r),
\]
where an empty sum is understood to be zero.
\end{lemma}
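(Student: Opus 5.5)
The plan is to count pairs $(H,K)$ with $H\unlhd K\leq P_r$ and $K/H\cong\mathbb Z(2)^\ell$. By Lemma~\ref{prop:intervals-finite-2-groups}(ii) these are exactly the pairs with $H\leq K$, $|K:H|=2^\ell$ and $\mathrm{Frat}(K)\leq H$. Write $N=\mathrm{Frat}(P_r)=\langle z\rangle$ and $V_r=P_r/N\cong\mathbb F_2^{2r+1}$. Two elementary facts will be used throughout.
\begin{enumerate}
\item[(a)] If $N\leq K$, then $\mathrm{Frat}(K)=K^2\,\mathrm{comm}(K,K)\leq N$, because $P_r/N$ is elementary abelian.
\item[(b)] If $X\leq P_r$ satisfies $X\cap N=1$, then $X$ is elementary abelian and $XN=X\times N$ is an elementary abelian subgroup containing $N$. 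Indeed, squares and commutators of elements of $X$ lie in $N\cap X=1$, and $N$ is central of order $2$.
\end{enumerate}
By (b), the subgroups $X$ of order $2^m$ with $X\cap N=1$ are exactly the complements of $N$ in the elementary abelian groups $E\geq N$ of order $2^{m+1}$. Each such $E$ has exactly $2^m$ complements to $N$, and there are $e_{m+1}(r)$ such $E$ by Proposition~\ref{prop:number-elementary-containing-N}. This is the same bookkeeping that underlies Lemma~\ref{prop:number-subgroups-almost-extraspecial}.

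\emph{Case 1: $N\leq H$.} By (a), $\mathrm{Frat}(K)\leq N\leq H$ holds automatically. The pairs are therefore in bijection with pairs of subspaces $H/N\leq K/N$ of $V_r$ with $\dim K/N-\dim H/N=\ell$. First choose the $\ell$-dimensional subspace $K/H$ inside $V_r/(H/N)$; equivalently, choose the pair by counting flags. Summing over $\dim K/N=m$ gives
\[\sum_m \qbinom{2r+1}{m}\qbinom{m}{\ell}=\qbinom{2r+1}{\ell}T_{2r+1-\ell}.\]
This uses the standard identity that the subspaces containing a fixed $\ell$-dimensional subspace number $T_{2r+1-\ell}$. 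This produces the first term of the formula.

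\emph{Case 2a: $N\not\leq H$ and $N\not\leq K$.} By (b), $K$ is elementary abelian with $K\cap N=1$, so $\mathrm{Frat}(K)=1$ and every subgroup $H$ of index $2^\ell$ in $K$ qualifies. If $|K|=2^m$ with $m\geq\ell$, there are $2^m e_{m+1}(r)$ choices of $K$ and $\qbinom{m}{\ell}$ choices of $H$. The term $m=\ell$ (so $H=1$) gives $2^\ell e_{\ell+1}(r)$, and the terms $m\geq\ell+1$ give the first sum, which is cut off at $m=r$ because $e_{m+1}(r)=0$ for $m>r$.

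\emph{Case 2b: $N\leq K$ and $N\not\leq H$.} By (a), $\mathrm{Frat}(K)\leq N$; together with $\mathrm{Frat}(K)\leq H$ this forces $\mathrm{Frat}(K)\leq H\cap N=1$. Hence $K$ is elementary abelian containing $N$, say $|K|=2^m$, and there are $e_m(r)$ choices of such $K$. Then $H$ ranges over the codimension-$\ell$ subspaces of $K\cong\mathbb F_2^m$ avoiding the vector $z$. By duality these correspond to the $\ell$-dimensional subspaces of the dual not contained in the hyperplane $z^\perp$, and their number is
\[\qbinom{m}{\ell}-\qbinom{m-1}{\ell}=2^{m-\ell}\qbinom{m-1}{\ell-1}.\]
The term $m=\ell$ gives $e_\ell(r)$ (here $H=1$), and the terms $\ell+1\leq m\leq r+1$ give the last sum.

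Since the three cases are disjoint and exhaustive, adding the contributions yields the formula. The main obstacle I expect is the careful justification of fact (b) and of the complement count $2^m$. I also need to check that the ranges of summation match, using the conventions $e_t(r)=0$ outside $1\leq t\leq r+1$. Finally, I need to check that the boundary terms $H=1$ are neither double counted nor missed, given that they appear separately in Cases 2a and 2b.
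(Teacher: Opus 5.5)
Your proposal is correct and follows essentially the paper's route. It uses the same split according to whether $\mathrm{Frat}(P_r)$ lies in $H$ and/or $K$, the same reduction to elementary abelian $K$ via $\mathrm{Frat}(K)\le H\cap\mathrm{Frat}(P_r)$, the same complement count $2^m e_{m+1}(r)$, and the same Gaussian--Pascal count for subspaces avoiding $z$. The differences are only bookkeeping: you absorb the $H=1$ pairs as the $m=\ell$ terms of your Cases 2a and 2b where the paper treats $H=1$ as a separate family, and you count the subspaces avoiding $z$ by duality rather than by subtracting those containing $z$, which gives the same number.
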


\begin{proof}
We divide the  sections $K/H\cong \mathbb{Z}(2)^\ell$ into four mutually exclusive families.

\smallskip
\noindent
\emph{First family: $\mathrm{Frat}(P_r)\leq H$.} In this case,
$\mathrm{Frat}(P_r) \leq H<K$,
and both $H/\mathrm{Frat}(P_r)$ and $K/\mathrm{Frat}(P_r)$ are subspaces of
$P_r/\mathrm{Frat}(P_r)\cong\mathbb F_2^{2r+1}$. Write
\[
a=\dim_{\mathbb F_2}(H/\mathrm{Frat}(P_r)).
\]
Then
\[
\dim_{\mathbb F_2}(K/\mathrm{Frat}(P_r))=a+\ell.
\]
The number of such pairs is
\[
\sum_{a=0}^{2r+1-\ell}
\qbinom{2r+1}{a}
\qbinom{2r+1-a}{\ell}.
\]
Using the Gaussian identity
\[
\qbinom{2r+1}{a}
\qbinom{2r+1-a}{\ell}
=
\qbinom{2r+1}{\ell}
\qbinom{2r+1-\ell}{a},
\]
this sum becomes
\[
\qbinom{2r+1}{\ell}
\sum_{a=0}^{2r+1-\ell}
\qbinom{2r+1-\ell}{a}
=
\qbinom{2r+1}{\ell}T_{2r+1-\ell},
\]
where we clearly identify the first summand in $\nu_\ell(r)$.

\smallskip
\noindent
\emph{Second family: $H=1$.} In this case, $K$ itself is elementary abelian of order $2^\ell$. There are $e_\ell(r)$ such subgroups containing $\mathrm{Frat}(P_r)$. Suppose instead that $ \mathrm{Frat}(P_r) \nleq K$. Then $K\mathrm{Frat}(P_r)$ is elementary abelian of order $2^{\ell+1}$ and contains $\mathrm{Frat}(P_r)$. Conversely, $K$ is a complement of $\mathrm{Frat}(P_r)$ in $K\mathrm{Frat}(P_r)$. There are $2^\ell$ such complements for each possible subgroup $K\mathrm{Frat}(P_r)$. Hence the number of elementary abelian subgroups $K$ of order $2^\ell$ not containing
$\mathrm{Frat}(P_r)$ is $2^\ell e_{\ell+1}(r).$
The contribution of this family is therefore
\[
e_\ell(r)+2^\ell e_{\ell+1}(r),
\]
where we identify the second summand in the requested expression of $\nu_\ell(r)$.

\smallskip
\noindent
\emph{Third family: $1<H<K$ and $ \mathrm{Frat}(P_r)\nleq K$.} Since the commutator subgroup
$[K,K]\leq N\cap K=1$ and $K^2\leq N\cap K=1,$  we have that $K$ is elementary abelian and so $|K|=2^m.$ Since $H\neq1$ and $|K:H|=2^\ell$, one has $m\geq\ell+1.$ The group $K \mathrm{Frat}(P_r)$ is elementary abelian of order $2^{m+1}$ and contains
$\mathrm{Frat}(P_r)$. There are $e_{m+1}(r)$
choices for $K\mathrm{Frat}(P_r)$, and each such group has
$2^m$ complements to $\mathrm{Frat}(P_r)$. Once $K$ is fixed, the number of subgroups $H$ of
codimension $\ell$ in $K$ is $\qbinom{m}{\ell}.$ Since $m\leq r$, the contribution is
\[
\sum_{m=\ell+1}^{r}
2^m\qbinom{m}{\ell}e_{m+1}(r),
\]
where we identify one more of the requested summands.

\smallskip
\noindent
\emph{Fourth family: $1<H<K$, $\mathrm{Frat}(P_r) \leq K$, and $ \mathrm{Frat}(P_r) \nleq H$.} Since $K/H$ is elementary abelian, $\mathrm{Frat}(K)\leq H$ and also $K/\mathrm{Frat}(P_r)$ is elementary abelian, so $\mathrm{Frat}(K)\leq \mathrm{Frat}(P_r).$ Because $ H\cap \mathrm{Frat}(P_r)=1,$
it follows that $\mathrm{Frat}(K)=1.$ Thus $K$ is elementary abelian, say $|K|=2^m.$ There are
$e_m(r)$ possible elementary abelian subgroups $K$ of this order containing
$\mathrm{Frat}(P_r)$. The subgroup $H$ has dimension $m-\ell$ and must not contain
$\mathrm{Frat}(P_r)$. The number of $(m-\ell)$-dimensional subspaces of $K$ not
containing the fixed $1$-dimensional subspace $\mathrm{Frat}(P_r)$ is
\[
\qbinom{m}{m-\ell}
-
\qbinom{m-1}{m-\ell-1}.
\]
By the Gaussian-Pascal  identity, this is
\[
2^{m-\ell}
\qbinom{m-1}{\ell-1}.
\]
Since $H\neq1$, one has $m\geq\ell+1$. Moreover, the maximal possible
dimension of an elementary abelian subgroup containing $\mathrm{Frat}(P_r)$ is
$r+1$. Thus the contribution is
\[
\sum_{m=\ell+1}^{r+1}
2^{m-\ell}
\qbinom{m-1}{\ell-1}e_m(r) 
\]
appears. Adding the four contributions  the formula is proved.
\end{proof}

We get to the proof of the last main result of the present paper.

\begin{proof}[Proof of Theorem \ref{thm:magnitude-almost-extraspecial}]
The group $\MH_{0,0}\bigl(G_{\cL(P_r)}\bigr)$ is free abelian on the vertices of the Hasse diagram. Hence
\[
\MH_{0,0}\bigl(G_{\cL(P_r)}\bigr)
\cong\mathbb Z^{\,|\cL(P_r)|}
=\mathbb Z^{\,s_r}
\]
by Lemma \ref{prop:number-subgroups-almost-extraspecial}. For $\ell\geq1$, the interval decomposition gives
\[
\MH_{k,\ell}\bigl(G_{\cL(P_r)}\bigr)
\cong
\bigoplus_{\substack{H<K\\|K:H|=2^\ell}}
\widetilde H_{k-2}((H,K);\mathbb Z).
\]
By Lemma \ref{prop:intervals-finite-2-groups}, an interval gives
a nonzero contribution if and only if
\[
\mathrm{Frat}(K)\leq H,
\]
or equivalently, $H\unlhd K$
and
$K/H\cong \mathbb{Z}(2)^\ell$. There are exactly $\nu_\ell(r)$ such pairs. For each such pair, the Solomon--Tits Theorem (see Theorem \ref{solomontits}) gives
\[
\widetilde H_{k-2}((H,K);\mathbb Z)
\cong
\begin{cases}
\mathbb Z^{\,2^{\binom{\ell}{2}}},
   &k=\ell,\\
0,&k\neq\ell.
\end{cases}
\]
Therefore $\MH_{\ell,\ell}\bigl(G_{\cL(P_r)}\bigr)
\cong
\mathbb Z^{
  \,2^{\binom{\ell}{2}}\nu_\ell(r)}$ for $ 1\leq\ell\leq2r+1.$
The only interval of length $2r+2$ is
$(1,G)=\cS(G).$ Since $\mathrm{Frat}(G) \neq1$, this interval is contractible. Hence
\[
\MH_{2r+2,\,2r+2}
\bigl(G_{\cL(P_r)}\bigr)=0.
\]
This completes the proof.
\end{proof}

\begin{cor}\label{cor:top-magnitude-almost-extraspecial}
Let $G=P_r$ be an almost extraspecial 2-group. Then
Theorem~\ref{thm:magnitude-almost-extraspecial} implies that
\[
\nu_{2r+1}(r)=1 \ \ \mbox{and} \ \ 
\MH_{2r+1,\,2r+1}
\bigl(G_{\cL(P_r)}\bigr)
\cong
\mathbb Z^{\,2^{r(2r+1)}}.
\]
Moreover,
$\MH_{2r+2,\,2r+2}
\bigl(G_{\cL(P_r)}\bigr)=0.$
\end{cor}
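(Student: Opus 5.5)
The plan is to specialize Theorem~\ref{thm:magnitude-almost-extraspecial} to the top value $\ell=2r+1$. For this it suffices to compute $\nu_{2r+1}(r)$ from Lemma~\ref{prop:number-elementary-sections}. The exponent then follows because $2^{\binom{2r+1}{2}}=2^{r(2r+1)}$. The vanishing in degree $2r+2$ was already shown at the end of the proof of Theorem~\ref{thm:magnitude-almost-extraspecial}: the only interval of length $2r+2$ is $(1,P_r)=\cS(P_r)$, which is contractible since $\mathrm{Frat}(P_r)\neq 1$.

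To compute $\nu_{2r+1}(r)$ I evaluate each summand of Lemma~\ref{prop:number-elementary-sections} at $\ell=2r+1$.
\begin{itemize}
\item The first summand is $\qbinom{2r+1}{2r+1}T_0=1\cdot 1=1$.
\item $e_{2r+1}(r)$ and $e_{2r+2}(r)$ vanish whenever $2r+1>r+1$, that is, for $r\geq 1$.
\item Both sums are empty, because their lower limit $m=2r+2$ exceeds the upper limits $r$ and $r+1$.
\end{itemize}
Hence $\nu_{2r+1}(r)=1$.

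As a sanity check, I will also argue this directly. A pair $(H,K)$ with $K/H\cong\mathbb Z(2)^{2r+1}$ has $|K:H|=2^{2r+1}$ inside $|P_r|=2^{2r+2}$. So either $K=P_r$ with $|H|=2$, or $H=1$ with $|K|=2^{2r+1}$.
\begin{itemize}
\item In the first case, $P_r/H$ elementary abelian forces $\mathrm{Frat}(P_r)\leq H$, so $H=\mathrm{Frat}(P_r)$. This gives exactly one pair.
\item In the second case, $K$ would be elementary abelian of order $2^{2r+1}$. But an elementary abelian subgroup $E$ of $P_r$ has $E\mathrm{Frat}(P_r)$ elementary abelian with totally singular image, of dimension at most the Witt index $r$, so $|E|\leq 2^{r+1}<2^{2r+1}$. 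Hence no such $K$ exists.
\end{itemize}
This confirms $\nu_{2r+1}(r)=1$, and the unique contributing interval is $(\mathrm{Frat}(P_r),P_r)\cong\cS(\mathbb Z(2)^{2r+1})$. By Corollary~\ref{titsfact} this is a wedge of $2^{\binom{2r+1}{2}}$ spheres of dimension $2r-1$, which reproduces the rank formula.

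The main obstacle is only bookkeeping: the vanishing of $e_t(r)$ for $t>r+1$ and the emptiness of the sums must be justified, and the bound on elementary abelian subgroups via the Witt index (Proposition~\ref{prop:number-elementary-containing-N}) underlies both. I expect nothing deeper is needed.
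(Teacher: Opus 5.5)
Your proposal is correct and follows the paper's route. The paper's proof simply asserts that $P_r/\mathrm{Frat}(P_r)$ is the unique elementary abelian section of rank $2r+1$ and then substitutes $\binom{2r+1}{2}=r(2r+1)$ into Theorem~\ref{thm:magnitude-almost-extraspecial}, which is exactly your ``sanity check''. Your version is more complete: you explicitly rule out the case $H=1$ via the Witt-index bound $|E|\le 2^{r+1}$ on elementary abelian subgroups, and your evaluation of Lemma~\ref{prop:number-elementary-sections} at $\ell=2r+1$ gives a consistent independent check.
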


\begin{proof}
The unique elementary abelian section of rank $2r+1$ is
$G/\mathrm{Frat}(G)\cong \mathbb{Z}(2)^{2r+1}.$
Thus $\nu_{2r+1}(r)=1.$ Since
\[
\binom{2r+1}{2}=r(2r+1),
\]
the formula follows from
Theorem~\ref{thm:magnitude-almost-extraspecial}.
\end{proof}

Although  $\cL(P_r)$ has rank
\[
\log_2|P_r|=2r+2,
\]
the top nonzero magnitude-homology group (see Corollary \ref{cor:top-magnitude-almost-extraspecial}) occurs in bidegree
\[
(2r+1,2r+1).
\]
The disappearance of the diagonal term in degree $2r+2$ reflects the
contractibility of the full proper subgroup poset
$\cS(P_r)=(1,P_r).$ On the other hand, the nonzero group in degree $2r+1$ is produced by
the Frattini quotient
$P_r/\mathrm{Frat}(P_r)\cong \mathbb{Z}(2)^{2r+1}.$

\begin{remark} \rm According to Theorem \ref{thm:magnitude-almost-extraspecial}, 
the bigraded rank polynomial of the magnitude homology of an almost extraspecial 2-group $G=P_r$ can be written as
\[
\sum_{k,\ell\geq0}
\operatorname{rank}_{\mathbb Z}
\MH_{k,\ell}\bigl(G_{\cL(P_r)}\bigr)
u^kq^\ell
=
s_r+
\sum_{\ell=1}^{2r+1}
2^{\binom{\ell}{2}}
\nu_\ell(r)(uq)^\ell.
\]
\end{remark}

\begin{example} \rm The case $r=1$ of an almost extraspecial 2-group gives back what we noted in case of the Pauli group on one qubit. Now $P=P_1\cong D_8\circ \mathbb{Z}(4)$ as per $(\dag)$.  Here
\[
d=3,
\qquad
e_1(1)=1,
\qquad
e_2(1)=3,
\]
and
\[
T_3
 =
\qbinom30+\qbinom31+\qbinom32+\qbinom33
 =
1+7+7+1
 =
16.
\]
Thus
\[
s_1
 =
1+T_3+2e_2(1)
 =
1+16+6
 =
23.
\]
The formula of
Lemma \ref{prop:number-elementary-sections} gives
\[
\nu_1(1)=48,
\qquad
\nu_2(1)=17,
\qquad
\nu_3(1)=1.
\]
Consequently,
\[
\MH_{k,\ell}
\bigl(P_{\cL(P)}\bigr)
\cong
\begin{cases}
\mathbb Z^{23},
   &(k,\ell)=(0,0),\\
\mathbb Z^{48},
   &(k,\ell)=(1,1),\\
\mathbb Z^{34},
   &(k,\ell)=(2,2),\\
\mathbb Z^{8},
   &(k,\ell)=(3,3),\\
0,
   &\text{otherwise}.
\end{cases}
\]
Indeed,
$2^{\binom22}\nu_2(1)=2\cdot17=34$
and $2^{\binom32}\nu_3(1)=8.$ Thus the general formula recovers the preceding direct computation for
$P$.
\end{example}

\begin{example} \rm The Pauli group on 2-qubits (here as $r=2$ and $P_2$ in $(\dag \dag)$) was studied in \cite{BDDR} under different perspectives. It turns out to be useful also in mathematical physics, as discussed in \cite{babaru, BBR}. Now
$P_2\cong D_8\circ D_8\circ \mathbb{Z}(4)$ has $|P_2|=64$ and
$P_2/\mathrm{Frat}(P_2)\cong \mathbb{Z}(2)^5.$ We have
$e_1(2)=1$,
$e_2(2)=15$,
$e_3(2)=15,$
and
\[
T_5
 =
1+31+155+155+31+1
 =
374.
\]

\begin{figure}[ht]
\centering
\begin{tikzpicture}[
    family/.style={
        draw,
        rounded corners=3pt,
        align=center,
        minimum width=32mm,
        minimum height=10mm,
        font=\small
    },
    singleton/.style={
        draw,
        rounded corners=3pt,
        align=center,
        minimum width=26mm,
        minimum height=9mm,
        font=\small
    },
    cover/.style={
        -{Latex[length=2mm]},
        semithick
    },
    level/.style={
        font=\scriptsize\itshape
    }
]

\node[singleton] (L0) at (0,0)
{$1$};

\node[singleton] (N) at (-2.8,1.6)
{$\mathrm{Frat}(P_2)$};

\node[family] (A2) at (2.8,1.6)
{$30$ elementary abelian subgroups\\[-1mm]
{\scriptsize order $2$, not containing $\mathrm{Frat}(P_2)$}};

\node[family] (EA4) at (-6.0,3.4)
{$15$ elementary abelian subgroups\\[-1mm]
{\scriptsize of order 4, containing $\mathrm{Frat}(P_2)$}};

\node[family] (C4) at (0,3.4)
{$16$  cyclic subgroups \\[-1mm]
{\scriptsize  of order $4$, containing $\mathrm{Frat}(P_2)$}};

\node[family] (EA4n) at (6.0,3.4)
{$60$ elementary abelian subgroups\\[-1mm]
{\scriptsize of order 4, not containing $\mathrm{Frat}(P_2)$}};

\node[family] (EA8) at (-4.0,5.3)
{$15$ elementary abelian subgroups\\[-1mm]
{\scriptsize of order 8, containing $\mathrm{Frat}(P_2)$}};

\node[family] (NE8) at (3.3,5.3)
{$140$ nonelementary abelian subgroups\\[-1mm]
{\scriptsize of order $8$, containing $\mathrm{Frat}(P_2)$}};

\node[family] (NE16) at (0,7.2)
{$155$ subgroups\\[-1mm]
{\scriptsize order $16$, all containing $\mathrm{Frat}(P_2)$}};

\node[family] (NE32) at (0,8.9)
{$31$ maximal subgroups\\[-1mm]
{\scriptsize order $32$, all containing $\mathrm{Frat}(P_2)$}};

\node[singleton] (P2) at (0,10.6)
{$P_2$};

\draw[cover] (L0) -- (N);
\draw[cover] (L0) -- (A2);

\draw[cover] (N) -- (EA4);
\draw[cover] (N) -- (C4);

\draw[cover] (A2) -- (EA4n);

\draw[cover] (EA4) -- (EA8);
\draw[cover] (EA4) -- (NE8);

\draw[cover] (C4) -- (NE8);

\draw[cover] (EA4n) -- (EA8);
\draw[cover] (EA4n) -- (NE8);

\draw[cover] (EA8) -- (NE16);
\draw[cover] (NE8) -- (NE16);

\draw[cover] (NE16) -- (NE32);

\draw[cover] (NE32) -- (P2);

\end{tikzpicture}

\caption{This is a ``compressed Hasse diagram'' of
$\cL(P_2)$ in the sense that the subgroups of  $P_2$ are collected in families of subgroups which  contain (or not)  $\mathrm{Frat}(P_2)$. These families are then ordered by ascending Gaussian binomial coefficients $\qbinom{5}{0}$,  $\qbinom{5}{1}$,  $\qbinom{5}{2}$,  $\qbinom{5}{3}$,  $\qbinom{5}{4}$,  $\qbinom{5}{5}$ which contribute to form $T_5$.}
\label{fig:hasse-P2-almost-extraspecial}
\end{figure}

Therefore
\[
s_2
=
1+T_5+2e_2(2)+4e_3(2)
=
1+374+30+60
=
465.
\]
The numbers of elementary abelian sections are
\[
\nu_1(2)=2378, \ 
\nu_2(2)=2645, \ 
\nu_3(2)=790, \ 
\nu_4(2)=62, \ 
\nu_5(2)=1.
\]
It follows for $G=P_2$ that
\[
\MH_{k,\ell}
\bigl(G_{\cL(P_2)}\bigr)
\cong
\begin{cases}
\mathbb Z^{465},
   &(k,\ell)=(0,0),\\
\mathbb Z^{2378},
   &(k,\ell)=(1,1),\\
\mathbb Z^{5290},
   &(k,\ell)=(2,2),\\
\mathbb Z^{6320},
   &(k,\ell)=(3,3),\\
\mathbb Z^{3968},
   &(k,\ell)=(4,4),\\
\mathbb Z^{1024},
   &(k,\ell)=(5,5),\\
0,
   &\text{otherwise}.
\end{cases}
\]
Indeed, we have
$
2^{\binom22}\nu_2(2)
 =
2\cdot2645
 =
5290, \ 
2^{\binom32}\nu_3(2)
 =
8\cdot790
 =
6320, \ 
2^{\binom42}\nu_4(2)
 =
64\cdot62
 =
3968, \ 
2^{\binom52}\nu_5(2)
 =
1024.
$
\end{example}

\bibliographystyle{alpha}

\bibliography{biblio}
\end{document}